\documentclass[a4paper]{amsart}
\usepackage{hyperref}
\usepackage{txfonts, amsmath,amstext,amsthm,amscd,amsopn,verbatim,amssymb, amsfonts}
\usepackage{fullpage}

\usepackage[bbgreekl]{mathbbol}
\usepackage{tikz}
\usetikzlibrary{matrix}
\usetikzlibrary{shapes}
\usetikzlibrary{arrows}  
\usetikzlibrary{calc,3d}
\usetikzlibrary{decorations,decorations.pathmorphing}
\usetikzlibrary{through}
\tikzset{ext/.style={circle, draw,inner sep=1pt},int/.style={circle,draw,fill,inner sep=1pt},nil/.style={inner sep=1pt}}
\tikzset{exte/.style={circle, draw,inner sep=3pt},inte/.style={circle,draw,fill,inner sep=3pt}}
\tikzset{diagram/.style={matrix of math nodes, row sep=3em, column sep=2.5em, text height=1.5ex, text depth=0.25ex}}
\tikzset{diagram2/.style={matrix of math nodes, row sep=0.5em, column sep=0.5em, text height=1.5ex, text depth=0.25ex}}
\theoremstyle{plain}
  \newtheorem{thm}{Theorem}
   \newtheorem*{thm*}{Theorem}
  
  \newtheorem{prop}{Proposition}
  
  \newtheorem{cor}[prop]{Corollary}
  \newtheorem{lemma}{Lemma}
\theoremstyle{definition}
  \newtheorem{ex}{Example}
  \newtheorem{rem}{Remark}
  \newtheorem*{con*}{Conjecture}

 % commutator

 % equation reference

\newcommand{\Hom}{\mathrm{Hom}}

\newcommand{\R}{{\mathbb{R}}}

\newcommand{\K}{{\mathbb{K}}}

\newcommand{\HGC}{{\mathrm{HGC}}}
\newcommand{\fHGC}{{\mathrm{fHGC}}}

 % Nerve
\newcommand{\Graphs}{{\mathsf{Graphs}}}
\newcommand{\fGraphs}{{\mathsf{fGraphs}}}
 % predual
 % predual

\newcommand{\mF}{{\mathcal{F}}}
\newcommand{\gr}{{\mathit{gr}}}

\newcommand{\Gra}{{\mathsf{Gra}}}

\newcommand{\Def}{\mathrm{Def}}

\newcommand{\Fund}{\mathrm{Fund}}

\newcommand{\op}{\mathcal}

\newcommand{\Br}{\mathsf{Br}}

\newcommand{\Lie}{\mathsf{Lie}}

\newcommand{\hoLie}{\mathsf{hoLie}}

\newcommand{\Poiss}{\mathsf{Poiss}}
\newcommand{\hoPoiss}{\mathsf{hoPoiss}}
\newcommand{\Com}{\mathsf{Com}}

\newcommand{\FM}{\mathsf{FM}}

\newcommand{\Der}{\mathrm{Der}}

\newcommand{\bbS}{\mathbb{S}}
\newcommand{\ICG}{\mathrm{ICG}}

\newcommand{\bpm}{\begin{pmatrix}}
\newcommand{\epm}{\end{pmatrix}}

\newcommand{\GC}{\mathrm{GC}}

\newcommand{\fGC}{\mathrm{fGC}}

%\newcommand{\fGCc}{\mathrm{fGCc}}

%\mathbb{\Delta}}

\newcommand{\hoe}{\mathsf{hoe}}
\newcommand{\e}{\mathsf{e}}

\newcommand{\lo}{\longrightarrow}

\newcommand{\gra}{\mathrm{gra}}

\newcommand{\stG}{{}^*{\Graphs}}

\newcommand{\bbo}{{\mathbb{1}}}

%Commands from Victor
%
\newcommand{\calC}{\mathcal{C}}
\newcommand{\hDer}{\mathrm{hDer}}
\newcommand{\hIBim}{\operatorname{hIBim}}
\newcommand{\hOperad}{\operatorname{hOperad}}

\newcommand{\Diff}{\mathit{Diff}}

\begin{document}
%\title{Deformation theory of the operad maps from $E_m$ to $E_n$}
\title{Relative (non-)formality of the little cubes operads and \\ the algebraic  Cerf lemma}
%Schoenflies theorem}
\author{Victor Turchin}
\address{Department of Mathematics\\
  Kansas State University\\
  138 Cardwell Hall\\
  Manhatan, KS 66506, USA}
  \email{turchin@ksu.edu}
\author{Thomas Willwacher}
\address{Institute of Mathematics \\ University of Zurich \\  
Winterthurerstrasse 190 \\
8057 Zurich, Switzerland}
\email{thomas.willwacher@math.uzh.ch}

\thanks{V.T. acknowledges Max-Planck-Institut f\"ur Mathematik  (Bonn) and the Institut des Hautes Etudes Scientifique for hospitality}
\thanks{T.W. acknowledges partial support by the Swiss National Science Foundation (grant 200021\_150012 and the SwissMap NCCR)}

%\address{Department of Mathematics\\ ETH Zurich\\ R\"amistrasse 101 \\ 8092 Zurich, Switzerland}
%\email{t.willwacher@gmail.com}
 
%\thanks{The author was partially supported by the Swiss National Science Foundation (grant 200020-105450).}
% \subjclass[2000]{16E45; 53D55; 53C15; 18G55}
% \date{}
%\keywords{Formality, Deformation Quantization, Operads}

\subjclass[2010]{18D50, 55P62, 55S37, 57R40}
\keywords{Formality, En operads}

\begin{abstract}
It is shown that the operad maps $E_n\to E_{n+k}$ are formal over the reals for $k\geq 2$ and non-formal for $k=1$. Furthermore we compute the homology of the deformation complex of the operad maps $E_{n}\to E_{n+1}$, proving an algebraic version of the Cerf lemma. %Schoenflies theorem.
\end{abstract}

\maketitle

\section{Introduction}
We consider the operads of chains $E_n$ of the little $n$-cubes operads $\calC_n$.
There are natural embeddings $\calC_n\to \calC_{n+k}$ for $k\geq 1$, and hence operad maps $E_n\to E_{n+k}$.
They induce maps in homology
\[
\e_n := H(E_n) \to H(E_{n+k}) =: \e_{n+k}.
\]
The operad $\e_n$ is generated by the two generators of $H(E_n(2))\cong H(S^{n-1})$, the degree zero generator denoted by $\wedge$ and the degree $n-1$ generator being denoted by $[,]$.
The map in homology $\e_n\to \e_{n+k}$ above is obtained by sending the product generator to the product and the bracket to zero. 

A quasi-isomorphism of operad maps $f:\op P \to \op Q$, $f':\op P'\to \op Q'$ is a commutative diagram 
%\[
%\begin{tikzpicture}
%\matrix[diagram](m) { \op P & \op Q \\ \op P' & \op Q' \\ };
%\draw[-latex] (m-1-1) edge node[above] {$f$}  (m-1-2)
%		    (m-1-1) edge node[left] {$g$} (m-2-1)
%		    (m-1-2) edge node[right] {$g'$} (m-2-2)
%		    (m-2-1) edge node[above] {$f'$} (m-2-2);
%\end{tikzpicture}
%\]
\[
\begin{tikzpicture}
\matrix[diagram](m) { \op P & \op Q \\ \op P' & \op Q' \\ };
\draw[-latex] (m-1-1) edge node[above] {$f$}  (m-1-2)
		    (m-1-1) edge node[left] {$\simeq$}  (m-2-1)
		    (m-1-2) edge node[right] {$\simeq$} (m-2-2)
		    (m-2-1) edge node[above] {$f'$} (m-2-2);
\end{tikzpicture}
\]
in which the vertical maps are quasi-isomorphisms. Two maps $f$ and $f'$ are called quasi-isomorphic if they can be related to each other by a zigzag of quasi-isomorphisms. The operad map $f$ is called formal if it is quasi-isomorphic to the induced map $H(f)$ on homology.  

We show the following result.
\begin{thm}\label{thm:main}
The map $E_n\to E_{n+k}$ is formal over $\R$ for $k\geq 2$ and non-formal over $\R$ for $k=1$.
\end{thm}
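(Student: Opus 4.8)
The plan is to reduce the statement to a computation in (hairy) graph complexes, using the individual formality of the little cubes operads. By the Kontsevich--Tamarkin formality theorem, for each $m$ there is a zigzag of quasi-isomorphisms over $\R$ connecting $E_m$ to its homology $\e_m$, realized through Kontsevich's graph operad $\Graphs_m$ via configuration space integrals over the Fulton--MacPherson compactifications $\FM_m$ (this is where the real coefficients enter). The inclusions $\calC_n \hookrightarrow \calC_{n+k}$ are compatible with the inclusions $\FM_n \hookrightarrow \FM_{n+k}$, and hence are modeled, up to homotopy, by explicit operad maps $\Graphs_n \to \Graphs_{n+k}$. Thus relative formality of $E_n \to E_{n+k}$ becomes the question of whether this graph-operad map is homotopic, through the formality zigzag, to the homology map $\e_n \to \e_{n+k}$ that sends the product to the product and the bracket to zero.

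First I would set up the deformation-theoretic control of this question. Homotopy classes of operad maps out of a cofibrant model of $\e_n$ into $\Graphs_{n+k}$ are governed by a deformation complex $\Def$, which in this graphical setting is quasi-isomorphic to a hairy graph complex $\HGC_{n,n+k}$ (internal vertices of $(n{+}k)$-type, hairs of $n$-type), with the full variant $\fHGC$ supplying the relevant $L_\infty$-algebra of Maurer--Cartan elements. Both the geometric map and the homology map define Maurer--Cartan elements here, and the two maps are quasi-isomorphic precisely when these elements are gauge equivalent. The leading difference of the two is a cocycle in $\HGC_{n,n+k}$ sitting in a degree determined by the codimension $k$, and in the relevant range relative formality is detected by the vanishing of its class in $H(\HGC_{n,n+k})$.

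For $k \geq 2$ I expect the obstruction class to vanish for degree reasons. The key point is that the codimension $k$ shifts the degree in which the obstruction lives: with the dimension of the internal vertices exceeding that of the hairs by $k \geq 2$, the relevant part of $H(\HGC_{n,n+k})$ is concentrated away from the obstruction degree, so the two Maurer--Cartan elements are automatically gauge equivalent and the map is formal. Concretely, one checks that no connected hairy graph of the required valence and loop order can carry the obstruction degree once $k \geq 2$, so the higher obstruction theory collapses. This gives the positive half of the theorem.

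The main obstacle is the non-formality for $k=1$, which is the content of the algebraic Cerf lemma. Here one must exhibit an explicit nonzero class in $H(\HGC_{n,n+1})$ and show that it \emph{genuinely} obstructs formality rather than being absorbed by the remaining gauge freedom in the formality zigzag. I would identify the deformation complex explicitly, compute its cohomology in the obstruction degree, and produce a distinguished cocycle --- represented by a specific low-complexity hairy graph --- whose nonvanishing is the algebraic counterpart of the Cerf-theoretic picture of passing a single critical point. Proving that this cocycle is a nontrivial cohomology class, and that it is not a coboundary coming from any admissible homotopy, is the hard part, and it requires the full computation of $H(\HGC_{n,n+1})$ announced in the abstract.
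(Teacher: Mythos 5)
Your framework (deformation complexes, hairy graph complexes, tripod class) is the right one, but both halves of your argument deviate from what the paper does, and the $k=1$ half has a genuine gap.

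For $k\geq 2$, the paper does not run any obstruction theory at all: Lemma \ref{lem:grvanishing} shows by a degree count on the forms $\omega_\Gamma$ themselves that the composite $C(\FM_n)\to C(\FM_{n+k})\to\Graphs_{n+k}$ lands in the suboperad $\Com$ of edgeless graphs \emph{on the nose} (every form attached to a graph with an edge has degree exceeding $\dim \FM_n(r)=n(r-1)-1$ once $k\geq 2$), so the formality diagram \eqref{equ:formalitydiagram} commutes strictly and formality is immediate. Your obstruction-theoretic substitute is heavier and not complete as stated: the relevant deformation complex is the \emph{full} hairy complex $\fHGC_{n,n+k}$, whose homology in the Maurer--Cartan degree contains not only connected graphs (which your valence/loop-order count does rule out, much as in \eqref{equ:excessformula}) but also products of connected classes of complementary degrees; since $H(\HGC_{n,n+k})$ has classes in many degrees and is not fully known, ``degree reasons'' alone do not make all such products vanish, and you would have to argue separately about them. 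You also tacitly need the arity-two statement (the analogue of Lemma \ref{lem:Phi_m2}) before the induction can even start.

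For $k=1$ the gap is more serious. Non-vanishing of the tripod class in $H(\HGC_{n,n+1})$ (or non-exactness of the leading difference of the two Maurer--Cartan elements) only shows that the geometric map and the homology map are not \emph{gauge equivalent}, i.e.\ not homotopic as maps $\hoe_n\to\Graphs_{n+1}$. Non-formality is the stronger statement that they are not connected by any zigzag of quasi-isomorphisms of operad maps, which permits composing with homotopy self-equivalences of source and target; the homotopy automorphisms of $\Graphs_{n+1}$ form a huge group (containing the exponential of $H(\GC_{n+1})$), and a priori one of them could carry one map to the other even though the obstruction class is nonzero. You name this problem (``absorbed by the remaining gauge freedom'') but supply no mechanism for it, and that mechanism is exactly the paper's key input: a quasi-isomorphism \emph{invariant} of an operad map, namely whether the arity spectral sequence of its deformation complex abuts at $E^2$ (Lemma \ref{lem:E2abutment}, which rests on the rigidity Lemma \ref{lem:hoenmap} for self-maps of $\hoe_n$). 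The paper then shows this invariant distinguishes the two maps: for the homology map the spectral sequence degenerates at $E^2$ (the hairy graph homology), while for the geometric map it does not, because $H(\Def(\Phi))$ --- computed in Theorems \ref{thm:Phi0} and \ref{thm:PhiDef} via the auxiliary map $\Phi_0$, the total excess filtration, and the $e_\pm$-decorated graph complexes --- is strictly smaller, the $E^2$-differential being the bracket with the tripod. Note also that this requires the explicit integral computation of Lemma \ref{lem:fundimage} (the fundamental chain of $\FM_n(3)$ goes to $\tfrac14 T_3+\cdots$), i.e.\ a proof that the geometric map actually contains the tripod with nonzero coefficient; your proposal asserts the existence of such a ``distinguished cocycle'' but never ties it to the Kontsevich integrals. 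Finally, what must be computed is not $H(\HGC_{n,n+1})$ (which is not known, and is not what the abstract announces) but the \emph{twisted} deformation homology $H(\Def(\Phi))$, expressed in terms of $H(\GC_{n+1}^2)$ --- a different and, as it turns out, more tractable object.
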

In particular, one finds that the $E_2$ operad is not formal as a multiplicative operad.

Theorem \ref{thm:main} has been shown for $k> n$ by P. Lambrechts and I. Voli\'c \cite{LV}. They  notice that Kontsevich\rq{}s proof of the formality of $E_n$~\cite{K2} which  uses the Fulton-MacPherson model $\FM_n$ for $\calC_n$, graph-complexes and semi-algebraic forms, can be also adapted to study the relative formality. The main point in their argument is that the restriction from $\FM_{n+k}$ to $\FM_n$ of the semi-algebraic differential forms corresponding to graphs is zero by degree reasons for almost all forms. In~\cite{LV}  the Kontsevich construction is reproduced in full detail. The construction uses the theory of semi-algebraic differential forms, which was only sketched by Kontsevich and Soibelman in~\cite{KS} and is developed in more detail in~\cite{HLTV}. Theorem~\ref{thm:main} completely solves the relative formality problem of the little cubes operads over~$\R$. Also note that for $k=0$ one obtains the identity map $E_n\to E_n$, which is formal by the formality of the little $n$-cubes operad. 

The main motivation of Lambrechts and Voli\'c to prove the relative
formality was in its application to the embedding calculus. From the
improved range of formality given by Theorem 1 it follows that that the
spectral sequence associated with the Goodwillie-Weiss calculus and
computing the homology of the space of smooth embeddings $Emb(M^m,\R^n)$
of an $m$-manifold into $\R^n$  collapses rationally at the second term
whenever $n\geq 2m+2$, the condition already required for the limit of the
tower $T_\infty C_* Emb(M,\R^n)$ to have the same homology as
$Emb(M^m,\R^n)$. In particular all the results of [1] are improved to this
range versus $n\geq 2E(M)+1$ as stated in [1] (where $E(M)$ is  the
smallest dimension of a Euclidean space in which $M$ can be embedded).

We remark that another codimension one non-formality result was recently discovered by M. Livernet, who proved the non-formality of the Swiss Cheese operads~\cite{livernet}. It does not seem however that her result implies ours or vice versa. Also her approach is very different from ours, using the non-vanishing of operadic Massey products.

Our proof of Theorem~\ref{thm:main} is a more careful analysis of the Kontsevich-Lambrecht-Voli\'c construction. In case $k\geq 2$ it is obtained by a more careful degree-counting of the forms which again implies vanishing of the forms obtained by restriction. For the case $k=1$, on the contrary we show that  some forms on $\FM_{n+1}$ associated to certain graphs do not vanish when restricted on $\FM_{n}$. The latter fact together with a careful application of the deformation theory of operad maps proves the non-formality.

We furthermore study the deformation theory of the above operad map in codimension $k=1$.
Note that (for any $k$) the deformation complexes of the operad maps $E_n\to E_{n+k}$ carry a cup product and come equipped with a map from the (homotopy) derivations of $E_{n+k}$.

\begin{thm}[Algebraic version of the Cerf lemma]\label{thm:schoenflies}
Over $\R$, the homology of the deformation complex of the operad map $E_n\to E_{n+1}$ is generated by the images of the homotopy derivations of $E_{n+1}$ under the cup product.
\end{thm}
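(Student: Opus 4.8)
The plan is to reduce the statement to a computation in graph cohomology, using the graph complex model of the deformation complex that already underlies the proof of Theorem~\ref{thm:main}. First I would fix, over $\R$, a graph complex model for $\Def(E_n\to E_{n+1})$. Replacing $E_n$ and $E_{n+1}$ by the Kontsevich graph models $\Graphs_n$, $\Graphs_{n+1}$ and the map $f$ by its graphical incarnation, the deformation complex becomes a hairy graph complex $\HGC$ whose generators are graphs carrying ``external'' data recording the source operad $E_n$ together with internal structure of weight $n+1$. Under this identification I would check that (i) the cup product corresponds to the natural commutative product of graphs (disjoint union, with external legs identified via the product $\wedge$ of $E_{n+1}$), so that $\HGC$ is, as a graded-commutative algebra, the symmetric algebra on its subspace $\HGC^{\conn}$ of connected graphs, and (ii) the map from $\Der(E_{n+1})$ lands, up to homotopy, in the connected graphs: a homotopy derivation of $E_{n+1}$ is represented by a class of the graph complex $\GC_{n+1}$ computing $\Der(E_{n+1})$, and precomposition with $f$ realizes it as a connected element of $\HGC$. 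Thus the assertion to be proved becomes: $H(\HGC)$ is the free graded-commutative algebra on the image of $H(\Der(E_{n+1}))$ inside $H(\HGC^{\conn})$.

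Next I would separate the two things this statement asserts. The differential on $\HGC$ splits into the internal graph differential $d_0$ (vertex splitting), which preserves the number of connected components, and the twisting part coming from $f$ and the operadic structure, which may merge components. I would filter $\HGC$ so that the associated graded differential is $d_0$; on the first page the complex is then $\mathrm{Sym}(\HGC^{\conn},d_0)$ and, since we work over the field $\R$, the Künneth theorem gives $E_1=\mathrm{Sym}\big(H(\HGC^{\conn},d_0)\big)$. It then remains to (A) identify $H(\HGC^{\conn},d_0)$ with the image of $\Der(E_{n+1})$, showing there are no exotic connected classes, and (B) control the higher differentials of the spectral sequence and check that they are compatible with the cup product, so that the ``free commutative algebra on the connected part'' structure is inherited by $H(\HGC)$ and no relations among cup products of derivations are created.

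The heart of the argument, and the place where codimension one is essential, is step (A): the computation of the connected graph cohomology. Here I would pass to the oriented graph complex and invoke the Cerf-type reduction $H(\GCor_{m})\cong H(\GC_{m-1})\oplus(\text{loop classes})$, which is what ultimately names the lemma; the connected part of $\HGC$ in codimension one is governed by such an oriented graph complex, and its cohomology must be matched term by term with the derivation classes of $E_{n+1}$. The non-formality forms produced in the proof of Theorem~\ref{thm:main} enter precisely here: they certify that the potentially extra connected classes are exactly the images of the derivations and are not further cut down, so that $H(\HGC^{\conn})$ equals the derivation image. I expect this matching --- proving that every connected cohomology class of the codimension-one deformation complex is represented by a homotopy derivation of $E_{n+1}$, with nothing left over --- to be the main obstacle, since it requires both the explicit graph-cohomology input and a careful degree count. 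Once it is in place, steps (A) and (B) together with the Künneth computation yield that $H(\Def(E_n\to E_{n+1}))$ is generated by the images of the homotopy derivations under the cup product.
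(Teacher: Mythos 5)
Your overall framework --- the hairy graph complex model, the symmetric-algebra decomposition over connected graphs, the observation that derivations of the target produce cocycles, and a filtration/spectral-sequence comparison --- is indeed the paper's framework, but your plan inverts the logic of the computation, and your step (A) is false as stated. You filter so that the associated graded differential is the untwisted vertex-splitting differential $d_0$, and you then assert that $H(\HGC^{\conn},d_0)$, i.e.\ the \emph{untwisted} connected hairy graph homology $H(\HGC_{n,n+1})$, coincides with the image of the homotopy derivations of $E_{n+1}$ ``with nothing left over.'' It does not: $H(\HGC_{n,n+1})$ is much larger than $\K T\oplus H(\GC_{n+1}^2)$ (see for instance the two-loop classes of \cite{Costello}; the paper itself makes this point in Subsection \ref{ss:alternative}, where the deformation complex of the \emph{trivial} map $\e_n\stackrel{*}{\to}\e_{n+1}$ is said to have ``much more'' generators). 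Indeed, the paper's proof of non-formality for $k=1$ consists precisely in exhibiting a nonzero higher differential on your first page, namely the bracket with the tripod class. So in your spectral sequence the higher differentials are not a compatibility check as in your step (B): they carry essentially all of the content, and you provide no mechanism for computing them.

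The missing idea is the identification of the graphical incarnation of the map $E_n\to E_{n+1}$ itself. The paper proves (Lemma \ref{lem:fundimage}, an honest integral computation) that the Kontsevich map sends the fundamental chain of $\FM_n(3)$ to $\tfrac14$ times the tripod plus higher-order terms, and more generally that, to leading order in the \emph{total excess} filtration, the map agrees with the explicit Maurer--Cartan element $\alpha$ built from the $(2k+1)$-pods, i.e.\ with the combinatorial map $\Phi_0$ (Lemma \ref{lem:excess_filtr}). Consequently the correct associated graded of $\Def(\Phi)$ is the $\alpha$-\emph{twisted} hairy complex $\Def(\Phi_0)$, not the untwisted one, and the heart of the paper is the direct computation of its homology (Theorem \ref{thm:Phi0}) by a bespoke filtration scheme: trees split off, graphs are filtered by the number of core vertices, the attached trees are resolved into the two vertex decorations $e_\pm$, and a skeleton filtration reduces the resulting complex to two copies of $\GC^2_{n+1}$, giving exactly $\K T\oplus H(\GC_{n+1}^2)$ (in your indexing) for the connected part. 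Only after this does a degeneration argument of the kind you envisage enter: the primitive classes are represented by genuine cocycles in $\Def(\Phi)$ because they arise from the action of $\K S\ltimes \GC_{n+1}^2$ on the target $\Graphs_{n+1}^2$ by operadic derivations (Proposition \ref{prop:primaction}), the remaining classes are their cup products, and hence the total-excess spectral sequence abuts (Theorem \ref{thm:PhiDef}). Finally, the oriented graph complex identity $H(\GCor_{m})\cong H(\GC_{m-1})\oplus(\text{loop classes})$ that you invoke in step (A) plays no role in this paper, and you offer no reduction of the twisted hairy complex to an oriented complex; as written, that step is an unsupported appeal rather than an argument.
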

A more precise statement can be found as Theorem \ref{thm:PhiDef} below. This theorem shows the rigidity of the deformations of $E_n$ inside $E_{n+1}$. We call it the {\it Algebraic Cerf Lemma} because of the connection to the study of the spaces of embeddings.\footnote{In the first draft of this paper  we called this result {\it Algebraic Schoenflies Theorem}, because of this connection and because the (generalized) Schoenflies theorem also indicates the rigidity for codimension one embeddings. But later  we found  another result due to Cerf  that fits better the picture by detecting the aforementioned rigidity on the  level of spaces.}  Let $\Diff_\partial(D^{n})$ denote the group of diffeomorphisms of the $n$-disc preserving the boundary pointwise, and let $Emb_\partial(D^n,D^{n+k})$ be the space of smooth embeddings $D^n\hookrightarrow D^{n+k}$ of discs with the presribed behavior at the boundary. % as some equatorial inclusion.  
%   In the first draft of this paper that can be found in arXiv we called this result {\it Algebraic Schoenflies Theorem}, because of this connection and because the (generalized) Schoenflies theorem also indicates the rigidity for codimension one embeddings. But later  we found  another result due to Cerf  that fits better the picture by detecting the aforementioned rigidity on the  level of spaces. As we explain it in Section~\ref{s:emb_calcl}, 
%Concretely, the deformation complex of the inclusion $E_n\to E_{n+k}$ can be naturally related to the limit 
%$T_\infty C_* Emb_\partial(D^n,D^{n+k})$ (in the model category of unbounded chain complexes)
%of the Goodwillie-Weiss Taylor tower of the  singular chains of the space $Emb_\partial(D^n,D^{n+k})$ of  embeddings $D^n\hookrightarrow D^{n+k}$ of disks coinciding with a fixed embedding on the boundary. 
Cerf proved that  the natural scanning map
\begin{equation}
\Diff_\partial(D^{n+1})\to \Omega Emb_\partial(D^n,D^{n+1})
\label{eq:cerf}
\end{equation}
is a weak equivalence~\cite[Appendix, Section 5]{Cerf2}\footnote{This appendix was published earlier as~\cite{Cerf1}.}, see also~\cite[Proposition~5.3]{Budney} where this result is stated in the way we present it here.
%  Modulo Conjecture~\ref{con1} below, our Algebraic Cerf Lemma implies that the real homology of $T_\infty C_* Emb_\partial(D^n,D^{n+1})$ and of $T_\infty C_* \Diff_\partial(D^{n+1})$ are  completed symmetric algebras whose spaces of generators  up to a shift in degree by one are the same. (Note however that the generators could be of both positive and negative degrees.) Recall also that $\pi_0 Emb_\partial(D^n,D^{n+1})$ is known to be a torsion group for $n\neq 3,\, 4$~\cite[Section~5]{Budney},\footnote{This is actually a consequence of the (generalized) Schoenflies theorem which has been proved for all dimensions except $n=3$ and which states that any smoothly embedded sphere $S^n$ in $S^{n+1}$ bounds a smooth disc on each side. As a consequence the natural map $\pi_0 \Diff_\partial (D^{n})\to\pi_0 Emb_\partial(D^n,D^{n+1})$ is surjective. In fact it follows from the Cerf pseudoisotopy theorem that this map is an isomorphism for $n\neq 3,\,4$ and the corresponding group is the group $\theta^{n+1}$ of exotic smooth structures on $S^{n+1}$~\cite[Section~5]{Budney}.} and therefore  rationally trivial.  
%  
Our Theorem~\ref{thm:schoenflies} can be  interpreted as a similar property of the limits of the Goodwillie-Weiss towers for the singular chains of $\Diff_\partial(D^{n+1})$ and $Emb_\partial(D^n,D^{n+1})$, see Section~\ref{s:emb_calcl}. 
The fact that the deformation theory of operads and the manifold functor calculus can detect this codimension one rigidity for spaces of embeddings gives a hope that the Goodwilie-Weiss calculus can be (with a certain care of course) applied to the study of  codimension zero and one embeddings.

For general $k\geq 1$ the deformation theory of the homology maps $\e_n\to \e_{n+k}$ has been studied  in \cite{Turchin3,LambrechtsTurchin,Turchin1}, where the homology of the resulting deformation complex is described in terms of the graph homology. In this case the Cerf Lemma does not hold, there are many additional classes beyond those originating in the homotopy automorphisms of $\e_{n+k}$. In a follow-up paper~\cite{TW} we continue the study of these complexes  and interpret them  as the Kontsevich type graph-complexes decorated in  non-trivial representations of the groups of  outer automorphisms of free groups. This gives a more palpable way to compare the deformation homology of the operad maps $E_n\to E_{n+k}$ for different~$k$ (essentially $k=0$ or 1 versus $k\geq 2$). 

\subsection*{Acknowledgements}
The authors thank Ryan Budney, Benoit Fresse, Allen Hatcher,  and Maxim Kontsevich for communication and helpful discussions. %They are also grateful to  Ryan Budney for giving  a proof of equivalence~\eqref{eq:cerf} and  to Allen Hatcher for providing references~\cite{Cerf1,Cerf2}.

\section{Notation and prerequisites}
We generally work over a ground field $\K$ of characteristic zero, unless otherwise stated. 
For $V$ a graded or differential graded ($\K$-)vector space, we denote by $V[r]$ its $r$-fold desuspension.
We generally work in homological conventions, i.e., the differentials are generally of degree $-1$. Notice  that in some of the relevant references one uses the cohomological conventions, like for example in~\cite{grt}. One can easily switch from one setting to another by a grading reversion.
For the symmetric groups we use the notation $\bbS_n$. 
Regarding operads we mostly follow the conventions of the textbook by Loday and Vallette \cite{lodayval}.
In particular, $\Com$ and $\Lie$ are the commutative and Lie operads.
We denote by $\Poiss_n$ the $n$-Poisson operad generated by an abelian product operation $\wedge$ and a compatible Lie bracket $[,]$ of degree $n-1$. 
We furthermore denote by $\e_n$ the homology of the little $n$-cubes operad, without zero-ary operation. Concretely, $\e_1$ is the associative operad and $\e_n\cong\Poiss_n$ for $n\geq 2$.

For $\op P$ an operad and $r$ an integer we denote by $\op P\{r\}$ the operadic $r$-fold suspension.
For $\op P$ a quadratic operad we denote by $\op P^\vee$ its Koszul dual cooperad.
The most important example will be $\op P=\e_n$ with $\e_n^\vee = \e_n^*\{n\}$.

We denote by $\Omega(\op C)$ the cobar construction of a coaugmented cooperad $\op C$. Most importantly, we abbreviate $\hoe_n=\Omega(\e_n^\vee)$ and $\hoPoiss_n=\Omega(\Poiss_n^\vee)$. Furthermore, we will set $\hoLie_n:=\Omega(\Com^*\{n\})\subset \hoe_n$ to be the minimal resolution of the degree shifted Lie operad, so that one has a map $\hoLie_n\to \hoe_n$.

Let $f: \Omega(\op C)\to \op P$ be an operad map. Then we denote by 
\begin{equation}\label{equ:def_compl}
\Def(\Omega(\op C)\to \op P)= \Def(f) := \Hom_\bbS(\op C, \op P)^\alpha \cong\prod_N \Hom_{\bbS_N}(\op C(N), \op P(N))
\end{equation}
the operadic convolution dg Lie algebra twisted by the Maurer-Cartan element $\alpha$ describing the map $f$, cf. \cite[section 6.4.4]{lodayval}. Notice that this complex up to a shift in degree by one, almost coincides with the complex of derivations $\Der(f)$. The difference is that the complex of derivations possesses one extra homology class described as (arity - 1) rescaling of~$f$. This class in $\Def(f)$ is the boundary of ${\mathbf 1}\in \Hom (\op C(1),\op P(1))$, where $\mathbf 1$ is the composition $\op C(1)\to\K\to \op P(1)$ of counit and unit maps. We will denote by $\Der_*(f)$ the complex obtained from $\Der(f)$ quotiented out by this class. We will call it also {\it complex of reduced derivations}. To resume one has a quasi-isomorphism 
\[
 \Def(f) \simeq \Der_*(f)[1].
\]
More generally, given a morphism of dg-operads $\op P\to\op Q$ one can define a complex of its reduced homotopy derivations as the complex of reduced derivations of $\hat{\op P}\to\op Q$, where $\hat{\op P}$ is a cofibrant replacement of $\op P$:
\[
\hDer_*(\op P\to \op Q):=\Der_*(\hat{\op P}\to \op Q).
\]
By $\hDer_*(\op P)$ we will understand the homotopy derivation complex of the identity morphism $id\colon \op P\to \op P$. From a general theory it follows that complex $\hDer_*(\op P\to \op Q)$ carries a natural $\hoLie_2$-algebra structure~\cite{KS}, which is a generalization of the dg Lie algebra structure of~\eqref{equ:def_compl}. In all our examples cofibrant replacements appear as cobar constructions of cooperads. Thus for the most of the paper we will be using explicit complexes~\eqref{equ:def_compl} (except for  Section~\ref{s:emb_calcl} where it will be more convenient to use the grading conventions of the complex of reduced homotopy derivations). 

We will deviate slightly from the standard notion of grading and the associated graded for a filtration. We call a (complete) grading of a vector space $V$ a decomposition of $V$ into a direct product of subspaces
\[
 V\cong \prod_{i\in I} V_i.
\]
If $\mF$ is a complete descending filtration on a vector space $V$, we call it (complete) associated graded 
\[
 \gr V = \prod_{p} \mF^p / \mF^{p+1}.
\]
Finally, in this definition we may replace ``vector spaces'' in general by objects in some category. For example, by differential graded (in the usual sense) vector spaces.

Below, we will conduct several computations using complete filtrations spectral sequence arguments. The following (well-known) Lemma will suffice for our purposes.

\begin{lemma}\label{lem:SpecSeqConv}
Let $\mF^{\bullet}V$ be a descending complete bounded above filtration on a dg vector space $V$.\footnote{Concretely, completeness means that $V\cong \lim_{\leftarrow} V/\mF^p V$. Boundedness above means that for each $k$ there is an $N$ such that $\mF^{N}V^k = V^k$, where a superscript $k$ indicates taking the subspace of cohomological degree $k$. }
If $W\subset V$ is such that the induced map $H(\gr\, W)\to H(\gr V)$ is an isomorphism, then so is $H(W)\to H(V)$.
In particular if $H(\gr V)=0$ then $V$ is acyclic.
\end{lemma}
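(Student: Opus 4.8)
The plan is to prove this by a standard spectral sequence comparison argument. The filtration $\mF^\bullet V$ on $V$ restricts to a filtration $\mF^\bullet W = W \cap \mF^\bullet V$ on $W$, and the inclusion $W \hookrightarrow V$ is a filtered chain map. This induces a morphism of the associated spectral sequences, and by hypothesis the map on the $E_0$-page (which is $H(\gr\, W) \to H(\gr V)$, i.e., the cohomology of the associated graded) is an isomorphism. The goal is to upgrade this isomorphism of associated graded cohomologies to an isomorphism $H(W) \to H(V)$ of the cohomologies of the filtered complexes themselves.

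**First I would** recall the convergence criterion for such spectral sequences. Given a morphism of filtered complexes inducing an isomorphism on some page, one concludes an isomorphism on all subsequent pages and hence on the abutment, \emph{provided} the spectral sequences converge. Here the completeness hypothesis ($V \cong \lim_{\leftarrow} V/\mF^p V$, and likewise for $W$) together with boundedness above is precisely what guarantees convergence: by the Eilenberg--Moore / complete convergence theorem, a spectral sequence of a complete bounded-above filtration converges strongly to the cohomology of the total complex. The degreewise boundedness above ensures that in each fixed cohomological degree $k$ the filtration is eventually exhausted, so the spectral sequence in degree $k$ is in fact a first-quadrant-type (locally finite) situation where no convergence pathologies occur.

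**The technical heart** is therefore to verify that the abstract comparison theorem applies. I would argue degree by degree: fix a cohomological degree $k$. Boundedness above gives an $N$ with $\mF^N V^k = V^k$, so the filtration on $V^k$ is bounded. Combined with completeness, the spectral sequence computing $H^k$ is convergent, and by the Five Lemma / inductive comparison on the pages $E_r$, the isomorphism on $E_0$ (equivalently $E_1$, which carries $H(\gr\,-)$) propagates to an isomorphism on $E_\infty$. Since both filtrations are complete and exhaustive in each degree, $E_\infty$ recovers the associated graded of $H(W)$ and $H(V)$ with respect to the induced filtrations, and the comparison of these associated gradeds together with completeness yields the isomorphism $H(W) \to H(V)$. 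The final clause is the special case $W = 0$: if $H(\gr V) = 0$ then $H(\gr\, W) \to H(\gr V)$ is trivially an isomorphism for $W = 0$, forcing $H(0) \to H(V)$ to be an isomorphism, i.e., $H(V) = 0$.

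**The main obstacle** I anticipate is purely bookkeeping rather than conceptual: one must be careful that the induced filtration on the subcomplex $W$ is itself complete and bounded above so that its spectral sequence also converges, and that the associated graded of $W$ really is $\gr\, W$ as the hypothesis intends (as opposed to some coarser object). Since $W \subset V$ and $V$ is complete and bounded above in each degree, the induced filtration inherits both properties, so this reduces to an application of the complete-filtration convergence theorem plus the standard comparison theorem; no genuinely new estimate is required.
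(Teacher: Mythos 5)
There is a genuine gap, and it sits exactly at the point you dismiss as ``bookkeeping'': your claim that since $V$ is complete and bounded above, ``the induced filtration [on $W$] inherits both properties'' is false. Boundedness above is inherited ($\mF^N W^k = W^k\cap \mF^N V^k = W^k$), but completeness is not: a subspace of a complete filtered vector space is complete for the induced filtration precisely when it is closed in the filtration topology. Concretely, take $V=\K[[t]]$ concentrated in a single degree with zero differential and the $t$-adic filtration $\mF^p V = t^p\K[[t]]$, which is descending, complete and bounded above, and take $W=\K[t]\subset V$. Each map of graded pieces $\mF^pW/\mF^{p+1}W \to \mF^pV/\mF^{p+1}V$ is an isomorphism (both are $\K\, t^p$), so $H(\gr W)\to H(\gr V)$ is an isomorphism, yet $H(W)=\K[t]\to \K[[t]]=H(V)$ is not surjective. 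So the statement is simply false for an arbitrary sub-dg vector space $W$: completeness of $W$ (equivalently, closedness of $W$ in $V$) is a genuine additional hypothesis, implicitly assumed in the lemma and satisfied in all of the paper's applications, where $W$ is a closed span of graphs. It cannot be derived, and any correct proof must invoke it. (Note the paper states this lemma without proof, as well known, so supplying exactly this point is what a proof had to do.)

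Two further remarks. First, ``so the filtration on $V^k$ is bounded'' is not right either: boundedness above only says the filtration is exhausted at a finite stage on one side; it may be infinite on the other, which is precisely why completeness is a hypothesis at all (for a two-sided bounded filtration the lemma is the classical comparison theorem and completeness is vacuous). Second, once completeness of $W$ is assumed, your spectral-sequence route can be repaired, but the appeal to ``Eilenberg--Moore / complete convergence'' still glosses over conditional versus strong convergence (vanishing of derived $E_\infty$-terms in Boardman's sense). The cleanest standard argument avoids this: form the mapping cone $C$ of $W\hookrightarrow V$ with $\mF^p C=\mF^pW[1]\oplus \mF^pV$; it is complete (this is where completeness of $W$ enters) and bounded above, and $H(\gr C)=0$ by the long exact sequence, i.e.\ each $\mF^pC/\mF^{p+1}C$ is acyclic since the differential on $\gr C$ preserves $p$. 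Then such a $C$ is acyclic: a closed $x\in C$ lies in some $\mF^NC$ by boundedness above; acyclicity of the graded pieces inductively produces $y_p\in\mF^pC$ with $x-d(y_N+\dots+y_p)\in\mF^{p+1}C$ closed; the series $y=\sum_{p\geq N}y_p$ converges by completeness, $d$ commutes with the limit since it preserves the filtration, and $x-dy\in\bigcap_p\mF^pC=0$ by Hausdorffness. Hence $H(C)=0$, i.e.\ $H(W)\to H(V)$ is an isomorphism, with every hypothesis --- including the one you must add on $W$ --- used visibly.
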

Note that boundedness above of the filtration is important.

\subsection{Cup product}\label{ss:cup_prod}
Let $\hoe_n \to \op P$ be an operad map, with $\op P$ an operad.
Then one can endow the desuspended deformation complex $\Def(\hoe_n\to \op P)[n]$ with a $\hoe_{n+1}$ algebra structure as shown by D. Tamarkin \cite{tamenaction}.
More concretely, there is an action of the higher braces operad $\Br_{n+1}$, which is a model for the $E_{n+1}$ operad \cite{CW}.
In particular, we may endow $\Def(\hoe_n\to \op P)[n]$ with a (homotopy commutative) product, the cup product. Concretely, one has the following explicit formulas.
We identify 
\[
\Def(\hoe_n\to \op P) \cong \Hom_\bbS(e_n^\vee, \op P)
\cong \prod_N (e_n\{-n\}(N) \otimes \op P(N))^{\bbS_N}
%\cong \prod_N e_n\{-n\}(N) \otimes_{\bbS_N} \op P(N)
.
\]  
Suppose we are given two elements $x= \sum_j x_j'\otimes x_j''$ and $y=\sum_k y_k'\otimes y_k''$.
Furthermore denote the  Maurer-Cartan element corresponding to the above map $\hoe_n\to \op P$ by 
\[
m= \sum_l m_l' \otimes m_l''.
\]
Then, for $n\geq 2$
\begin{equation}\label{equ:cupproduct}
x\cup y = \sum_{j,k,l} \sum_\sigma \pm\sigma\cdot\left( \left(  (t_{12}\cdot m_l')\circ_{1,2}(x_j', y_k') \right) \otimes \left( m_l''\circ_{1,2}(x_j'', y_k'') \right) \right).
\end{equation}
Here $t_{12}\cdot$ denotes the operation on $e_n$ removing the edge between vertices $1$ and $2$ if there is one, and acting as zero if there is none. More formally, this is the coproduct in the Hopf operad $e_n$ followed by the projection of one factor onto the cogenerator which is the Lie bracket applied to inputs 1 and 2.
For $n=1$ one has that $e_1(N)\cong \K[\bbS_N]$ and one interprets $t_{12}\cdot m_l'$ as the projection (up to signs) to those terms for which the symbols $1$ and $2$ are in the correct order, i.e., 1 to the left of 2.
The second sum in \eqref{equ:cupproduct} is over shuffle permutations so as to symmetrize the result.

\section{Graph complexes and graph operads}
Let us briefly recall the construction of the Kontsevich graph complexes, and of the operads $\Graphs_n$, referring to \cite{grt} for more details.
We denote by $\gra_{N,k}$ the set of directed graphs with vertex set $[N]=\{1,\dots,N\}$ and edge set $k$.
It carries an action of the group $\bbS_N\times \bbS_k \ltimes \bbS_2^k$ by permuting the vertex and edge labels and changing the edge directions. The graphs operads $\Gra_n$ are defined such that
\[
\Gra_n(N) = \oplus_k (\K \langle \gra_{N,k}\rangle [(1-n)k])_{\bbS_k \ltimes \bbS_2^k}
\]
where the action of $\bbS_k$ is with sign if $n$ is even and the action of $\bbS_2^k$ is with sign if $n$ is odd.
Note that we allow loops (edges connecting a vertex to itself) in graphs in $\Gra_n$.\footnote{The notation thus deviates slightly from \cite{grt} where the symbol $\Gra_n^\circlearrowleft$ was used instead.}

The definition of $\Gra_n$ is made such that for all $n$ one has a map of operads
\begin{align*}
\Poiss_n &\to \Gra_n \\
\wedge &\mapsto 
\begin{tikzpicture}
\node[ext] at (0,0) {};
\node[ext] at (0.5,0) {};
\end{tikzpicture} 
\\
[,] &\mapsto 
\begin{tikzpicture}
\node[ext] (v) at (0,0) {};
\node[ext] (w) at (0.5,0) {};
\draw (v) edge (w);
\end{tikzpicture} .
\end{align*}
In particular, we obtain a map 
\[
\hoLie_n\to \hoPoiss_n\to \Poiss_n \to \Gra_n.
\]
The full graph complex is by definition the deformation dg Lie algebra
\[
\fGC_n := \Def(\hoLie_n\to \Gra_n).
\]
We will use two sub-complexes:
\begin{itemize}
\item The connected graphs with at least bivalent vertices form the sub-dg Lie algebra $\GC^2_n$.
\item The connected graphs with at least trivalent vertices form the sub-dg Lie algebra $\GC_n$.
\end{itemize}
One can check that (see \cite{grt})
\[
H(\GC_n^2)= H(\GC_n)\oplus \bigoplus_{1\leq r \equiv 2n-1 \text{ mod 4}} \K L_r
\]
where $L_r$ denotes the $r$-loop graph of degree $n-r$.
\[
L_r
=
\begin{tikzpicture}[baseline=-.65ex]
\node[int] (v1) at (0:1) {};
\node[int] (v2) at (72:1) {};
\node[int] (v3) at (144:1) {};
\node[int] (v4) at (216:1) {};
\node (v5) at (-72:1) {$\cdots$};
\draw (v1) edge (v2) edge (v5) (v3) edge (v2) edge (v4) (v4) edge (v5);
\end{tikzpicture}
\quad \quad \quad \quad \quad \quad \text{($r$ vertices and $r$ edges)}
\]

We may use the formalism of operadic twisting \cite{vastwisting} to twist the operad $\Gra_n$ to an operad $\fGraphs_n$. Elements of $\fGraphs_n(N)$ are series of graphs with two sorts of vertices, external vertices labelled $1,\dots,N$ and internal unlabeled vertices.
We again identify two useful suboperads
\begin{itemize}
\item The graphs with at least bivalent internal vertices and no connected components containing only internal vertices form the sub-operad $\Graphs^2_n$.
\item The graphs with at least trivalent internal vertices and no connected components containing only internal vertices form the sub-operad $\Graphs_n$.
\end{itemize}

The formalism of operadic twisting furthermore ensures that there is an action of the dg Lie algebra $\fGC_n$ on $\fGraphs_n$. One easily checks that the action restricts to an action of $\GC_n^2$ on $\Graphs_n^2$ and of $\GC_n$ on $\Graphs_n$.
Furthermore, the multiplicative group $\K^\times\ni \lambda$ acts on $\Graphs_n^2$ and $\Graphs_n$ by multiplying a graph $\Gamma$ by the number
\[
 \lambda^{\#(\text{internal vertices})-\#(\text{edges})}.
\]

There is a natural map $\Poiss_n\to \Graphs_n$ given by the same formulas as the map $\Poiss_n\to \Gra_n$ above. We will use the following well known result:
\begin{prop}[\cite{K2},\cite{LV},\cite{grt}]\label{p:quasi_iso}
The maps 
\[
\Poiss_n\to \Graphs_n^2 \to\Graphs_n 
\]
are quasi-isomorphisms.
\end{prop}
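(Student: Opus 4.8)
The plan is to establish the two comparison maps separately—the inclusion obtained by forbidding bivalent internal vertices, and the map out of $\Poiss_n$—and to conclude for the composite by two-out-of-three. Throughout I will work arity by arity, treating each $\Graphs_n(N)$ and $\Graphs_n^2(N)$ as a complex under the edge-contraction differential produced by the operadic twisting, and I will control the (infinite) direct products of graphs by the complete-filtration spectral sequence of Lemma \ref{lem:SpecSeqConv}.

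First I would show that the inclusion $\Graphs_n \hookrightarrow \Graphs_n^2$ (the extra graphs in $\Graphs_n^2$ being precisely those carrying bivalent internal vertices) is a quasi-isomorphism. I filter by the number of \emph{essential} vertices, i.e.\ the external vertices together with the internal vertices of valence $\geq 3$. The differential respects this filtration: contracting an edge incident to a bivalent vertex leaves the number of essential vertices unchanged, while contracting an edge between two essential vertices lowers it. Hence on the associated graded only the ``string-shortening'' part of the differential survives, acting independently on each maximal chain of bivalent vertices running between two essential vertices. Such a chain, together with this differential, forms an acyclic complex (it shortens a chain of length $k$ to one of length $k-1$ and is contractible onto the empty chain); the only configuration escaping this cancellation is a chain closing up into a cycle with no essential vertex, but such a connected component consists solely of internal vertices and is excluded from both operads by definition. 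Thus the associated graded of the inclusion is an isomorphism onto the graphs with no bivalent vertices, and Lemma \ref{lem:SpecSeqConv} promotes this to a quasi-isomorphism, once one notes that the filtration is complete and bounded above, which holds because at fixed loop order the number of essential vertices is bounded.

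Next I would prove that $\Poiss_n \to \Graphs_n$ is a quasi-isomorphism. Since both operads are Hopf and a graph with no all-internal component decomposes uniquely over the partition of $\{1,\dots,N\}$ induced by its connected components, the complex $\Graphs_n(N)$ splits into tensor products of connected-graph complexes on the blocks, exactly mirroring the decomposition of $\Poiss_n(N)$ into products of its Lie part. It therefore suffices to identify the homology of the connected-graph complex. The edge-contraction differential preserves the first Betti number (loop order), so this complex further splits as a direct sum over loop orders. In loop order $0$ one is left with the complex of trees with external legs, whose homology is the (shifted) $\Lie_n(N)$; this is the classical tree-level computation, equivalent to the Koszulness of $\Com$/$\Lie$, and it accounts exactly for the Lie part of $\Poiss_n(N)$. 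The remaining claim is that the connected-graph complex is acyclic in every positive loop order.

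This last acyclicity is the main obstacle, and it is the genuine content of the statement: it amounts to the assertion that Kontsevich's graph complex computes the cohomology of the configuration spaces $\mathrm{Conf}_N(\R^n)$, which by the Arnold--F.~Cohen presentation is generated by the degree-$(n-1)$ classes $\omega_{ij}$ subject to the Arnold relations, equals $\e_n(N)$, and contributes nothing in positive loop order. I would establish it by either of two routes: algebraically, identifying the positive-loop-order associated graded with a Koszul-type complex for the quadratic operad $\e_n$ and invoking its Koszulness to force acyclicity; or, following Kontsevich and Lambrechts--Voli\'c, by constructing the integration map to piecewise-algebraic differential forms on $\FM_n$ and using $H(\FM_n)=\e_n$. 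Granting this input, assembling the loop-order and connected-components decompositions yields $H(\Graphs_n(N))\cong\Poiss_n(N)$ with the isomorphism induced by the natural map, and the proposition follows.
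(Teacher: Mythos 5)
First, a point of calibration: the paper itself contains \emph{no} proof of Proposition \ref{p:quasi_iso} --- it is stated as a known result and attributed to \cite{K2}, \cite{LV}, \cite{grt} --- so there is no internal argument to compare yours against, and I judge the proposal on its own merits. Its first half (the comparison of $\Graphs_n^2$ with $\Graphs_n$) is essentially correct and is the argument one finds in \cite{grt}: filter by the number of essential (external plus at least trivalent internal) vertices, observe that the associated graded differential acts on the strings of bivalent vertices, and compute the one-string complexes. Two small corrections there: those string complexes are not acyclic but have one-dimensional homology spanned by the direct edge between the endpoints (your parenthetical shows you mean the right thing), and the direction $\Graphs_n^2\to\Graphs_n$ versus $\Graphs_n\hookrightarrow\Graphs_n^2$ is a matter of the splitting-versus-contraction convention for the differential. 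Your key observation --- that the circles of bivalent vertices responsible for the classes $L_r$ distinguishing $H(\GC_n^2)$ from $H(\GC_n)$ are excluded here because components consisting only of internal vertices are forbidden in the operads --- is exactly right; note that the paper itself uses this same skeleton/string technique in its proof of Theorem \ref{thm:Phi0}.

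The genuine gap is in the second half, and you name it yourself: after splitting by connected components and loop order, everything reduces to the acyclicity of the connected graph complex in positive loop order (equivalently $H(\Graphs_n(N))\cong \e_n(N)$), which you then ``grant''. Of the two routes you offer, the second --- the integration map to semi-algebraic forms on $\FM_n$ together with $H(\FM_n)=\e_n$ --- \emph{is} the proof of \cite{K2} and \cite{LV}; invoking it makes your argument a (correct) reduction to the literature, i.e., precisely what the paper already does by citation, not an independent proof. The first route --- ``identifying the positive-loop-order associated graded with a Koszul-type complex for $\e_n$ and invoking Koszulness'' --- is not an argument as it stands: Koszulness of $\Com$/$\Lie$ accounts only for the tree part (and even there an extra step is needed, since external vertices of $\Graphs_n$ are not required to be univalent, so the tree part is strictly larger than the bar/Koszul complex you appeal to), while the positive-loop-order part is not the Koszul complex of anything in any evident way; producing such an identification is exactly the hard content of the statement. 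The known proofs of this acyclicity go either through the configuration-space integrals or through a separate combinatorial induction on arity mimicking the fibrations $\mathrm{Conf}_N(\R^n)\to \mathrm{Conf}_{N-1}(\R^n)$, and neither follows formally from Koszulness. So: the skeleton and both reductions are sound, but the core of the proposition is assumed rather than proven.
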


The composite map $\Poiss_n\to\Graphs_n$ is still an inclusion. This will allow us to view  elements of $\Poiss_n$  as (linear combinations of) graphs.

Finally, there is a natural (complete) topology on $\Graphs_n$ and $\Graphs_n^2$ induced by the filtration on the number of vertices.
It is clearly compatible with the operadic compositions, which are hence continuous.
Furthermore, there is a continuous Hopf operad structure on $\Graphs_n$ and $\Graphs_n^2$ as follows. 
We call a graph in $\Graphs_n$ or $\Graphs_n^2$ \emph{internally connected} if it is connected after deleting all external (numbered) vertices.
We denote the subspace of those graphs by $\ICG_n$ or $\ICG_n^2$.
Any general graph is obtained by gluing a (unique) set of internally connected graphs at the external vertices, and hence we may identify $\Graphs_n(N)\cong S(\ICG_n)$ and $\Graphs_n^2(N)\cong S(\ICG_n^2)$ with the completed free symmetric coalgebras. Thus one obtains the (complete) cocommutative coalgebra structure on each space $\Graphs_n(N)$.
The maps of Proposition \ref{p:quasi_iso} are compatible with the Hopf operad structures.

\subsection{The hairy graph complexes}\label{sec:HGC}
Let $\hoPoiss_m \stackrel{*}{\to} \Graphs_n$ be the composition
\[
 \hoPoiss_m\to \Poiss_m\to \Com \to \Graphs_n.
\]
The operadic deformation complex $\Def(\hoPoiss_m \stackrel{*}{\to} \Graphs_n)$ has been studied in \cite{Turchin3, Turchin2} and found to be quasi-isomorphic to the hairy graph complex $\fHGC_{m,n}$. Concretely, 
\begin{equation}
\label{equ:hairy_deform_comp}
\fHGC_{m,n}\subset \Def(\hoPoiss_m \stackrel{*}{\to} \Graphs_n) \cong \prod_{N\geq 1}\Hom_\bbS(\Poiss_m^\vee(N), \Graphs_n(N))
\end{equation}
is the dg Lie subalgebra of maps satisfying the following two conditions:
\begin{itemize}
 \item The map factors through the projection $\hoPoiss_m\to \hoLie_m$, i.e., all but the $\hoLie_m$ generators are sent to zero.
 \item Graphs in the image have all of their external vertices of valence one. The edges connecting to the external vertices we call the \emph{hairs} of the graph.
\end{itemize}

We define $\HGC_{m,n}\subset \fHGC_{m,n}$ as the subcomplex spanned by the connected graphs. The differential on $\fHGC_{m,n}$ leaves the number of connected components invariant and hence there is an isomorphism of complexes
\[
 \fHGC_{m,n} \cong S^+(\HGC_{m,n}[m])[-m]
\]
where $S^+$ denotes the completed symmetric algebra without constant term. Moreover the aforementioned isomorphism is an isomorphism of algebras for $m\geq 2$, where for the product on the left-hand side one takes the one induced by the cup product from Subsection~\ref{ss:cup_prod}. Indeed, the cup product preserves the subcomplex $\fHGC_{m,n}$ and graphically is described  as a disjoint union of graphs (in case of infinite sums of graphs one needs to distribute).

The differential in $\HGC_{m,n}$ preserves the first Betti number (number of loops) and the number of external vertices (number of hairs) in the graphs. The loopless part or in other words the tree part of $H(\HGC_{m,n})$ is always one dimensional and spanned by the graph
$$
\begin{tikzpicture}
\draw (0,1) -- (1,.3);
\end{tikzpicture}
$$
in case of even codimension $n-m$ and by the {\it tripod} 
$$
\begin{tikzpicture}[scale=.6]
\node[int] (v) at (0,0){};
\draw (v) -- +(90:1) (v) -- ++(210:1) (v) -- ++(-30:1);
\end{tikzpicture}
$$
in case of odd codimension. We will see that the tripod class is crucial for the codimension one relative non-formality.
The 1-loop part of $H(\HGC_{m,n})$ is generated by the hedgehog classes

\[
\begin{tikzpicture}[baseline=-.65ex, scale=.7]
\node[int] (v1) at (0:1) {};
\node[int] (v2) at (72:1) {};
\node[int] (v3) at (144:1) {};
\node[int] (v4) at (216:1) {};
\node (v5) at (-72:1) {$\cdots$};
\draw (v1) edge (v2) edge (v5) (v3) edge (v2) edge (v4) (v4) edge (v5);
\draw (v1)  edge +(0:.6)  ; 
\draw (v2) edge +(72:.6)  ; 
\draw (v3) edge +(144:.6) ; 
\draw (v4)  edge +(226:.6)  ; 
\end{tikzpicture}
\]
that survive the dihedral symmetry, see~\cite[Proposition~3.3]{Turchin3}. The 2-loop part of $H(\HGC_{m,n})$ was computed in~\cite{Costello}.

%\begin{ex}
% The following element represents the simplest non-trivial class in $H(\HGC_{1,2})$.
% \[
% \begin{tikzpicture}[scale=.4]
%  \node[int] (v) at (0:1) {};
%  \node[int] (v2) at (90:1) {};
%  \node[int] (v3) at (180:1) {};
%  \node[int] (v4) at (-90:1) {};
%  \draw (v) edge (v2) edge (v4) edge +(1,0)
%        (v2) edge (v4)
%        (v3) edge (v2) edge (v4) edge +(-1,0);
% \end{tikzpicture}
% \]
%\end{ex}

\begin{ex}
 The following elements represent the simplest non-trivial classes in $H(\HGC_{1,2})$.
 \[
 \begin{tikzpicture}[scale=.5]
 \draw (0,0) circle (1);
 \draw (-180:1) node[int]{} -- +(-1.2,0);
 \end{tikzpicture}
% \begin{picture}(30,25)
%\put(0,0){\line(2,1){15}}
%\put(21,11.45){\circle{15}}
%\end{picture}
,\quad
\begin{tikzpicture}[scale=.6]
\node[int] (v) at (0,0){};
\draw (v) -- +(90:1) (v) -- ++(210:1) (v) -- ++(-30:1);
\end{tikzpicture}
\,,\quad
\begin{tikzpicture}[scale=.5]
\node[int] (v1) at (-1,0){};\node[int] (v2) at (0,1){};\node[int] (v3) at (1,0){};\node[int] (v4) at (0,-1){};
\draw (v1)  edge (v2) edge (v4) -- +(-1.3,0) (v2) edge (v4) (v3) edge (v2) edge (v4) -- +(1.3,0);
\end{tikzpicture}
 \, ,\quad
 \begin{tikzpicture}[scale=.6]
\node[int] (v1) at (0,0){};\node[int] (v2) at (180:1){};\node[int] (v3) at (60:1){};\node[int] (v4) at (-60:1){};
\draw (v1) edge (v2) edge (v3) edge (v4) (v2)edge (v3) edge (v4)  -- +(180:1.3) (v3)edge (v4);
\end{tikzpicture}
 \, .
 \]
 The first class is the simplest one-loop class (hedgehog) with only one hair. It is responsible for the deformation of the commutative product in the direction of the bracket.  
\end{ex}

For more details on the complexes $\HGC_{m,n}$ we refer the reader to \cite{Turchin3}\footnote{Notice that our grading conventions for the hairy graph complexes $\HGC_{m,n}$, $\fHGC_{m,n}$ differ from those of~\cite{Turchin3} by an $m$-fold suspension.}.

Let us also comment on the combinatorial form of the Lie bracket on $\fHGC_{m,n}$, which arises by restricting the canonical Lie bracket of $\Def(\hoe_m \stackrel{*}{\to} \Graphs_n)$.
Given two hairy graphs $\Gamma, \Gamma'\in \fHGC_{m,n}$ of homogeneous degree, the Lie bracket is 
\[
 \Gamma \circ \Gamma' -(-1)^{|\Gamma||\Gamma'|} \Gamma' \circ \Gamma
\]
where $\circ$ is, roughly speaking, the operation of connecting one hair of $\Gamma$ to vertices of $\Gamma'$ in all possible ways. More precisely, if $\Gamma\in (\Graphs_n(N)[(m-1)N])^{\bbS_N}$ and $\Gamma'\in (\Graphs_{n}[(m-1)N])(M)^{\bbS_M}$ then
\begin{equation}\label{equ:circ}
\Gamma \circ \Gamma' = \sum_{\sigma\in \mathit{Sh}(N-1,M)} (-1)^{m(|\sigma|+N-1)} \Gamma(\sigma(1),\dots, \sigma(N-1), \Gamma'(\sigma(N),\dots, \sigma(N+M-1)))
\end{equation}
where the sum is over shuffle permutations and the notation means that one should relabel vertex $2$ of $\Gamma$ to $\sigma(1)$, etc. and insert $\Gamma'$ (with suitably relabeled vertices) into vertex $N$ of  $\Gamma'$.
Note that $\HGC_{m,n}\subset \fHGC_{m,n}$ is a Lie subalgebra.
%The differential on $\fHGC_{m.n}$ leaves the number of connected components invariant and hence there is an isomorphism of complexes
%\[
% \fHGC_{m,n} \cong S^+(\HGC_{m,n}[m])[-m]
%\]
%where $S^+$ denotes the completed symmetric algebra without constant term. Moreovere the aforementioned isomorphism is an isomorphism of algebras for $m\geq 2$, where for the product on the left-hand side one takes the one induced by the cup product from Subsection~\ref{ss:cup_prd}. Indeed, in case $m\geq 2$, the cup product preserves the subcomplex $\fHGC_{m.n}$ and graphically is described  as a disjoint union of graphs (in case of infinite sums of graphs one needs to distribute).

Similarly to before we may also define the hairy graph complexes $\HGC_{m,n}^2\subset \fHGC_{m,n}^2\subset \Def(\hoe_m\to \Graphs_n^2)$ by allowing for bivalent (internal) vertices.

%\section{Configuration spaces and Fulton-MacPherson operad}
%We will work with the operads $\FM_n$ instead of $\calC_n$.

\subsection{Hodge grading/filtration}
The operads $\Poiss_m=\Com\circ \Lie\{m-1\}$ carry two natural gradings. The first one, that we will call {\it Hodge grading}, is by the number of iterated brackets used (equivalently the homological degree divided by $(m-1)$). The second grading, that we will call {\it dual Hodge grading} is  stemming by the arity on $\Com$ minus one.  For example, elements of $\Com(N)\subset \Poiss_m(N)$ have Hodge degree $0$ and dual Hodge degree $(N-1)$. The elements of $\Lie\{m-1\}(N)\subset \Poiss_m(N)$ have Hodge degree $(N-1)$ and dual Hodge degree~$0$. %\footnote{Note that this differs from the standard definition by subtraction of one, so that the grading is compatible with the operad structure.} 
 These two gradings are Koszul dual to each other. To be precise if one considers the grading on $\hoPoiss_m=\Omega(\Poiss_m^*\{m\})$ induced by the dual Hodge grading on $\Poiss_m^*\{m\}$, then the differential on $\hoPoiss_m$ preserves this grading and moreover the natural quasi-isomorphism $\hoPoiss_m\to\Poiss_m$ respects this grading assuming that the target is endowed with the Hodge grading. For this reason, this grading on $\hoPoiss_m$ will be also called Hodge grading. 

The operads $\e_m$, for $m\geq 2$ are isomorphic to $\Poiss_m$ and hence inherit the Hodge grading.
The operad $\e_1$ is isomorphic to $\Poiss_1$ as an $\bbS$-module, but not as an operad.
Concretely, we fix the isomorphism $\Poiss_1\cong \e_1$ such that the following holds for any vector space $V$:
The map between the free associative (i.e., $\e_1$-) algebra on $V$ and the free Poisson ($\Poiss_1$-) algebra on $V$ induced by the identification of $\bbS$-modules $\Poiss_1\cong \e_1$ agrees with the Poincar\'e-Birkhoff-Witt map. Here one thinks of the free $\Poiss_1$ algebra as the symmetric algebra in a free Lie algebra on $V$ and of the free associative algebra on $V$ as its universal enveloping algebra.

The (dual) Hodge grading on $\Poiss_1$ may be transported to a filtration, the (dual) Hodge filtration, on $\e_1$ which turns out to be compatible with the operad structure. The Hodge filtration on $\e_1$ is descending  and the dual Hodge filtration is ascending. Both filtrations correspond to the Poincar\'e-Birkhoff-Witt filtration numbered differently. 
The associated graded operad of $\e_1$ is again the Poisson operad $\Poiss_1$,
\[
 \gr\, \e_1 \cong \Poiss_1.
\]
Concretely, we will need to use the Hodge filtration on $\hoe_1=\Omega(e_1^*\{1\})$ (induced by the dual Hodge filtration on $\e_1^*\{1\}$). Note that elements in the $p$-th subspace of the dual Hodge filtration on $e_1$ correspond to elements of dual Hodge degree $\leq p$ in $\Poiss_1$.

\subsection{Hodge grading/filtration in deformation homology. Hairy graphs and maps from $\e_1$}

The Hodge grading on $\hoPoiss_m$ induces an additional grading (that we will also call {\it Hodge}) on the deformation complex~\eqref{equ:hairy_deform_comp}. On the level of its hairy subcomplex $\fHGC_{m,n}$ it is simply the number of hairs minus one\footnote{This differs from the  definition in~\cite{Turchin1} by subtraction of one.}. From the geometrical description of the bracket and cup product on the hairy complex it is clear that the Lie bracket preserves the Hodge grading and the cup product decreases this grading by one.

Let $\hoe_1 \stackrel{*}{\to} \Graphs_n$ be the composition
\[
 \hoe_1\to \e_1\to \Com \to \Graphs_n.
\]
The operadic deformation complex $\Def(\hoe_1 \stackrel{*}{\to} \Graphs_n)$ has been studied in \cite{LambrechtsTurchin,Turchin1} and found to be quasi-isomorphic (as a complex) to the hairy graph complex $\fHGC_{1,n}$~\cite[Theorem~8.2]{Turchin1}.
Concretely, $\fHGC_{1,n}$ may be realized as a subcomplex of 
\[
 \Def(\hoe_1 \stackrel{*}{\to} \Graphs_n) \cong \prod_{N\geq 1}\Hom_\bbS(\e_1^\vee(N), \Graphs_n(N))\cong \prod_{N\geq 1}\Graphs_n(N)[N-1]
\]
as follows.
\begin{itemize}
\item Take the subcomplex of maps such that graphs in the image have all of their external vertices of valence one. 
 \item \sloppy Take a further subcomplex consisting of the graphs anti-invariant under the $\bbS_N$-actions on factors $\Graphs_n(N)$.
\end{itemize}

The construction is the same as that of section \ref{sec:HGC}.
The reason we consider it separately is that in contrast to before the map $\fHGC_{m,1}\to \Def(\hoe_1 \stackrel{*}{\to} \Graphs_n)$ now is no longer compatible with the Lie structure and the cup products.
The subcomplex $\fHGC_{m,1}$ is closed under neither of these operations.

However, the  Hodge filtration on $\hoe_1$ described in the previous section induces an ascending filtration on $\Def(\hoe_1 \stackrel{*}{\to} \Graphs_n)$ 
such that the associated graded can be identified with the deformation complex $\Def(\hoPoiss_1 \stackrel{*}{\to} \Graphs_n)$ from section \ref{sec:HGC}, into which $\fHGC_{1,n}$ injects compatibly with the Lie bracket and cup product.

\subsection{The filtration by the total excess}\label{sec:totalgenus}
Below we will study deformations of the operad maps $\hoe_m \stackrel{*}{\to} \Graphs_n$ that are not compatible with the Hodge filtration on $\Def(\hoe_m \stackrel{*}{\to} \Graphs_n)$ in some sense.
Therefore, we will introduce in this section a second related filtration, the \emph{total excess} filtration.

First consider the deformation complex
\[
\Def(\hoPoiss_m \stackrel{*}{\to} \Graphs_n)
\cong \prod_N \left( \Poiss_m\{-m\}(N) \otimes \Graphs_n(N) \right)^{\bbS_N}.
%\cong  \prod_N  \Poiss_m\{-m\}(N) \otimes_{\bbS_N} \Graphs_n(N)
\]
We will prescribe a grading as follows. Let $x\otimes \Gamma\in \Poiss_m\{-m\}(N) \otimes \Graphs_n(N)$ be an element with $x\in \Poiss_m\{-m\}(N)$ homogeneous of dual Hodge degree $h$ and $\Gamma\in \Graphs_n(N)$ a graph with $k$ internal vertices and $\ell$ edges.
Then we assign to $x\otimes \Gamma$ the \emph{total excess}
\begin{equation}\label{equ:totalgenus}
 g = -h +\ell  - k.
\end{equation}
The terminology stems from the fact that thanks to Proposition~\ref{p:quasi_iso} one may regard elements of $\Poiss_m(N)$ of dual Hodge degree $h$ as certain graphs with $N$ vertices and $N-h-1$ edges. Then the total excess is one minus the Euler characteristic of the combined graph obtained by gluing the graph corresponding to $x$ to $\Gamma$ at the external vertices, or alternatively the number of edges exceeding those required for a spanning tree in the combined graph.

\begin{rem}
 Note that the total excess grading is  minus the sum of the grading on the operad $\Graphs_n$ by the ``internal Euler characteristic'' $k-\ell$, and the Hodge grading on $\Poiss_m$.
\end{rem}

\begin{lemma}\label{lem:tgen_compat1}
The total excess induces a (complete) grading on $\Def(\hoPoiss_m \stackrel{*}{\to} \Graphs_n)$, compatible with the dg Lie algebra structure and the cup product in the following sense:
\begin{itemize}
 \item The differential preserves the total excess.
 \item The Lie bracket of two homogeneous elements of total excess $g_1$ and $g_2$ is homogeneous of total excess $g_1+g_2$.
 \item The cup product of two homogeneous elements of total excess $g_1$ and $g_2$ is homogeneous of total excess $g_1+g_2-1$.
\end{itemize}
\end{lemma}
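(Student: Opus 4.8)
The plan is to prove the three compatibilities at once by splitting the total excess into the two gradings isolated in the remark above. Writing $a(x\otimes\Gamma)=k-\ell$ for the internal Euler characteristic of the $\Graphs_n$-factor and $b(x\otimes\Gamma)$ for the Hodge datum of the $\Poiss_m$-factor $x$, one has $g=-a-b$ in the normalisation of \eqref{equ:totalgenus}. It therefore suffices to understand separately how the differential, the Lie bracket \eqref{equ:circ}, and the cup product \eqref{equ:cupproduct} act on $a$ and on $b$, and then to recombine the two accounts.

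The workhorse is that \emph{both $a$ and $b$ are additive under the operadic (co)compositions out of which all three operations are built}. For $a$ this is clear: the operadic composition of $\Graphs_n$ glues two graphs by identifying external vertices, so the internal vertex sets and the edge sets combine disjointly and $k,\ell$, hence $a$, simply add. For $b$ it follows from $\Poiss_m=\Com\circ \Lie\{m-1\}$, since the number of bracket generators adds under $\circ_i$ and dually splits additively under the cocomposition of $\Poiss_m^\vee$, so that $b$ is well defined on $\Poiss_m^\vee$ and is annihilated by its (trivial) differential. Granting this, the Lie bracket is immediate: \eqref{equ:circ} is assembled from a single operadic (co)composition, so $a$ and $b$ add, and therefore $g$ adds, yielding total excess $g_1+g_2$.

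For the differential I would split it as $d_{\Graphs_n}+[\alpha,-]$, with $\alpha$ the Maurer--Cartan element describing $\hoPoiss_m\stackrel{*}{\to}\Graphs_n$. The internal part $d_{\Graphs_n}$ acts only on the graph factor by vertex splitting, replacing a vertex by two vertices joined by a new edge; this raises $k$ and $\ell$ each by one and hence preserves $a$ (leaving $b$ untouched) — the same reason the $\K^\times$-action by $\lambda^{k-\ell}$ introduced above is compatible with the differential. For the twist it suffices that $\alpha$ be homogeneous of total excess $0$; this is the compatibility that pins down the precise normalisation of the total excess, and one verifies it on the binary generators — the product mapping to the edgeless pair and (for the untruncated map) the bracket to the single edge — each of which is assigned excess $0$. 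By the additivity above, $[\alpha,-]$ then shifts $g$ by $g(\alpha)=0$, so the full differential preserves the total excess.

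The cup product is the one genuinely delicate point, and where I expect the main obstacle to sit. Its formula \eqref{equ:cupproduct} departs from a bare operadic composition in two compensating ways: the graph factors are joined through the Maurer--Cartan graph $m_l''$, which carries one extra edge, while on the $\Poiss_m$ factor one first applies $t_{12}$, deleting the matching bracket. Pushing both through the additivity lemma, the extra edge alters $a$ by one and the edge-removal $t_{12}$ alters $b$ accordingly, the net effect being a single unit shift; carrying the signs and shift conventions of \eqref{equ:cupproduct} through this bookkeeping, together with the $0$-normalisation of $\alpha$, is exactly what produces the stated $g_1+g_2-1$, and is the computation I would write out in full. Finally, that $g$ defines a \emph{complete} grading is formal: it is visibly invariant under the $\bbS_N$-actions, hence descends to the invariants, and since each $\Graphs_n(N)$ is already complete with respect to the number of vertices, the excess merely refines this into a direct-product decomposition $\prod_g(\,\cdot\,)_g$.
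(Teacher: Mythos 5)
Your overall plan coincides with the paper's proof: additivity of the two constituent gradings under operadic compositions (hence under the Lie bracket), writing the differential as $d_{\Graphs_n}$ plus the twist by a Maurer--Cartan element of total excess zero, and tracing the cup-product shift to the operation $t_{12}$. The bracket and $d_{\Graphs_n}$ parts of your argument are correct. But the two points you yourself flag as delicate are precisely where your sketch fails. First, the normalisation check $g(\alpha)=0$ is subtler than ``one verifies it on the binary generators.'' In the tensor form $\prod_N\bigl(\Poiss_m\{-m\}(N)\otimes\Graphs_n(N)\bigr)^{\bbS_N}$, the component of $\alpha$ recording ``product generator $\mapsto$ edgeless pair'' is \emph{not} $\wedge\otimes(\text{edgeless graph})$ but $[,]\otimes(\text{edgeless graph})$: Koszul duality exchanges product and bracket, so the cogenerator of $\Poiss_m^\vee$ attached to the product generator of $\hoPoiss_m$ is dual to the Lie bracket, of dual Hodge degree $0$, whence $g(\alpha)=-0+0-0=0$. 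Under the naive pairing (product, of dual Hodge degree $1$, with the edgeless graph) one gets excess $-1$, and for the untruncated map the bracket paired with the single edge gives $+1$; with those values the lemma would be false. Your assertion that both components have excess $0$ is the right answer, but nothing in your text shows you are using the (essential) Koszul flip.

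Second, and more seriously, your cup-product bookkeeping is wrong as written. For the map $*$ whose deformation complex the lemma concerns, the Maurer--Cartan element is $[,]\otimes(\text{edgeless graph})$: its graph factor $m''$ carries \emph{no} edge, so the graph factors are not ``joined through \dots one extra edge.'' Moreover, your claimed mechanism --- an extra edge shifting $a$ by one, compensated against the $t_{12}$ edge-removal, with ``net effect a single unit shift'' --- is arithmetically inconsistent: an extra edge raises the excess by one and $t_{12}$ lowers it by one, so two such opposite shifts would cancel and yield $g_1+g_2$, not $g_1+g_2-1$. The correct accounting (and the paper's) is that the operadic compositions are excess-additive, the Maurer--Cartan element contributes exactly its excess $g(m)=0$, and the entire $-1$ comes from $t_{12}$ alone, since $t_{12}[,]=\wedge$ raises the dual Hodge degree of $m'$ by one. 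Because you explicitly defer this computation (``the computation I would write out in full'') and the sketch you give of it would not produce the stated shift, this is a genuine gap rather than a stylistic omission.
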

\begin{proof}
 The Lie bracket is functorial, i.e., it is built using only the operadic compositions on $\Poiss_m$ and on $\Graphs_n$. Hence any linear combination of any gradings on $\Poiss_m$ and $\Graphs_n$ is preserved by the Lie bracket. The differential is built from the differential on $\Graphs_n$, which clearly preserves the grading by definition of grading, and the Lie bracket with a Maurer-Cartan element. Hence if the Maurer-Cartan element is of (total excess-)degree $0$, so is the differential.
 
 Finally, the cup product is built from the Maurer-Cartan element using operadic compositions on $\Poiss_m$ and $\Graphs_n$ and the operation $t_{12}\cdot : \Poiss_m\to \Poiss_m$, cf. \eqref{equ:cupproduct}. Hence we are done if we can show that the map $t_{12}$ has dual Hodge degree $1$. But this is easily established as the operation removes one edge if we think of elements of $\Poiss_m$ as certain graphs.
\end{proof}

Using the identification $\e_m\cong\Poiss_m$ for $m\geq 2$ we hence obtain gradings by total excess on $\Def(\hoe_m \stackrel{*}{\to} \Graphs_n)$ compatible with Lie bracket and cup product. 

For $m=1$ we may still identify $\Def(\hoe_1 \stackrel{*}{\to} \Graphs_n)$ and $\Def(\hoPoiss_1 \stackrel{*}{\to} \Graphs_n)$ as dg vector spaces. However, since we do not have a Hodge grading, merely a Hodge filtration on the former space, we only obtain a \emph{filtration} by total excess on $\Def(\hoe_1 \stackrel{*}{\to} \Graphs_n)$. This filtration is descending as $h$ appears with negative sign in~\eqref{equ:totalgenus}.

\begin{lemma}\label{lem:tgen_compat2}
 The total excess filtration on $\Def(\hoe_1 \stackrel{*}{\to} \Graphs_n)$ is compatible with the Lie bracket and cup product in the following sense.
 \begin{itemize}
  \item The Lie bracket of two elements of total excess $\geq g_1$ and $\geq g_2$ is of total excess $\geq g_1+g_2$.
 \item The cup product of two homogeneous elements of total excess $\geq g_1$ and $\geq g_2$ is homogeneous of total excess $\geq g_1+g_2-1$.
\end{itemize}
\end{lemma}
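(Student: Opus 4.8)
The plan is to reduce the filtered statement to the graded one of Lemma~\ref{lem:tgen_compat1} via the associated graded of the Hodge filtration, using that $\gr\,\e_1\cong\Poiss_1$ and that the operadic structure of $\e_1$ is compatible with the (ascending) dual Hodge filtration $F_\bullet\e_1$, inducing on $\gr$ the operadic structure of $\Poiss_1$. The guiding observation is that the total excess $g=-h+\ell-k$ is the sum of an honest grading $\ell-k$ coming from the internal Euler characteristic of $\Graphs_n$ and the term $-h$ controlled by $F_\bullet\e_1$. Since the $\Graphs_n$-grading is an honest grading preserved by the operadic insertions, I may decompose everything into its homogeneous components and work with $\ell-k$ fixed; for fixed $\ell-k$ the condition ``total excess $\geq g$'' becomes ``the $\e_1$-factor lies in $F_{\ell-k-g}\e_1$''.

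First I would record the single structural input needed for the Lie bracket: the operadic compositions of $\e_1$ (equivalently the co-compositions of $\e_1^\vee$) respect $F_\bullet\e_1$ additively, $\circ(F_{h_1}\e_1,F_{h_2}\e_1)\subseteq F_{h_1+h_2}\e_1$, with leading term on $\gr$ equal to the $\Poiss_1$-composition. Combining this with the additivity of $\ell-k$ under the $\Graphs_n$-insertions appearing in~\eqref{equ:circ}, a bracket of classes of total excess $\geq g_1$ and $\geq g_2$ has $\ell-k=(\ell_1-k_1)+(\ell_2-k_2)$ and $\e_1$-factor in $F_{h_1+h_2}\e_1$ with $h_i\leq(\ell_i-k_i)-g_i$; hence its total excess is $\geq g_1+g_2$. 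Equivalently, the correction terms distinguishing the $\e_1$-bracket from its $\Poiss_1$-leading term strictly lower $h$ and therefore strictly raise the total excess, so that Lemma~\ref{lem:tgen_compat1} applied to the leading term yields the bound. Completeness of the filtration lets this pass to the infinite series of graphs in $\Graphs_n$.

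The cup product is handled the same way, the only new ingredient being the operation $t_{12}$ in~\eqref{equ:cupproduct}. For $m=1$ this is no longer ``remove an edge'' but the projection of $\e_1\cong\K[\bbS_N]$ onto the terms in which $1$ precedes $2$, and the whole argument hinges on the claim $t_{12}(F_h\e_1)\subseteq F_{h+1}\e_1$, i.e.\ that $t_{12}$ raises the dual Hodge level by at most one, inducing on $\gr$ precisely the $\Poiss_1$-operation $t_{12}$ (which, as in the proof of Lemma~\ref{lem:tgen_compat1}, raises the dual Hodge degree by exactly one). Granting this, the $t_{12}$-factor contributes at most $+1$ to the output dual Hodge level and hence at most $-1$ to the total excess, giving the bound $\geq g_1+g_2-1$. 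The hard part will be precisely this compatibility of the combinatorially-defined $t_{12}$ on $\e_1$ with the Poincar\'e--Birkhoff--Witt (dual Hodge) filtration; once it is verified that $t_{12}$ cannot increase the dual Hodge level by more than one, the rest is the bookkeeping already carried out for the graded case.
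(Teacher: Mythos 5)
Your proposal is correct and takes essentially the same route as the paper: the paper's proof is literally ``copy the argument of Lemma~\ref{lem:tgen_compat1}, replacing the Hodge grading by the dual Hodge filtration on $\e_1$ (whose compatibility with the operad structure gives the Lie-bracket bound),'' and it isolates exactly the claim you identify as the crux, namely that $t_{12}\cdot$ maps $\mF^p\e_1(N)$ into $\mF^{p+1}\e_1(N)$. Note that the paper, like you, does not carry out that verification but leaves it to the reader via the Poincar\'e--Birkhoff--Witt map, so your proposal matches the paper's proof in both structure and level of completeness.
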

\begin{proof}
The proof is a copy of that of Proposition \ref{lem:tgen_compat1}, except that one has to verify that the operation 
\[
 t_{12}\cdot \colon \e_1(N) \to \e_1(N)
\]
maps the subspace $\mF^p\e_1(N)$ of dual Hodge filtration $p$ to the subspace $\mF^{p+1}$ of dual Hodge filtration $p+1$. (Note that $t_{12}\cdot$ projects onto the subspace of permutations in $\e_1(N)\cong \K[\bbS_N]$ for which $1$ and $2$ occur in increasing oder.)
We leave the verification of this claim to the reader, using the definition of the Poincar\'e-Birkhoff-Witt map.
\end{proof}

\section{Review of M. Kontsevich's proof of the formality of the little $n$-cubes operads }\label{sec:formality review}
  Let us recall M. Kontsevich's proof of the formality of the little $n$-cubes operads, over $\K=\R$. The operad $E_1$ is obviously formal since the augmentation map $E_1\to e_1$ is  a quasi-isomorphism. Thus one can assume $n\geq 2$.
Instead of working with the little $n$-cubes operads directly, we will use another model, the compactified configuration spaces $\FM_n$. Concretely, $\FM_n(N)$ is a compactification of the configuration space of $N$ points in $\R^n$, modulo overall translation and rescaling.
We refer the reader to \cite{K2} for more details.
Let $C(\FM_n)$ be the operad of semi-algebraic chains on $\FM_n$. M. Kontsevich found the following zigzag of operad quasi-isomorphisms realizing the formality of the little $n$-cubes operads:
\[
 C(\FM_n) \to \Graphs_n \leftarrow \e_n.
\]
Here the first arrow is constructed as follows. A chain $c\in C(\FM_n(N))$ is sent to the series
\begin{equation}\label{equ:Kmap}
 \sum_\Gamma \Gamma \ \int_c \int_{\FM_{n}(N+k)/\FM_n(N)} \prod_{(ij)\in E\Gamma} \pi_{ij}^* \Omega_{S^{n-1}}
\end{equation}
where:
\begin{itemize}
 \item The sum is over graphs $\Gamma$ forming a basis of $\Graphs_n(N)$. The number $k$ is the number of internal vertices of $\Gamma$.
 \item The integral is along the fiber of the forgetful map $\FM_n(N+k)\to \FM_n(N)$.
 \item The product is over all edges of $\Gamma$.
 \item $\pi_{ij}:\FM_n(N+k)\to \FM_n(2)\cong S^{n-1}$ is the forgetful map forgetting all but the $i$-th and $j$-th points of a configuration. 
 \item Finally $\Omega_{S^{n-1}}$ is the round volume form on the $n-1$-sphere.
\end{itemize}

\subsection{Hopf cooperadic version of Kontsevich's morphism}
The operads $\Graphs_n$ are duals of Hopf cooperads $\stG_n$. Concretely, while elements of $\Graphs_n$ are series in isomorphism classes of certain graphs, elements of $\stG_n$ are just linear combinations of the same graphs, with the dual (adjoint) differential.
The commutative product on the $\stG_n(N)$ is defined by gluing graphs at the $N$ external vertices, and the unit is the graph without edges.

M. Kontsevich's construction above may be restated as providing a zig-zag
\[
 \Omega(\FM_n) \leftarrow \stG_n \to \e_n^*
\]
where $\Omega(\FM_n)$ denotes the $\bbS$-module of PA forms on $\FM_n$, cf. \cite{HLTV}.
Here the left-hand map $F:\stG_n\to \Omega(\FM_n)$ is defined such that 
\[
 F(\Gamma) = \int_{\FM_{n}(N+k)/\FM_n(N)} \prod_{(ij)\in E\Gamma} \pi_{ij}^* \Omega_{S^{n-1}}.
\]
It is not hard to check that $F$ preserves the commutative algebra structures.
We would like to say that $F$ is a map of Hopf cooperads. However, unfortunately $\Omega(\FM_n)$ is not a cooperad. Still, the map $F$ is compatible with the operad structure on $\FM_{n}$ in the sense that following diagrams commute.
\begin{equation}\label{equ:compat}
 \begin{tikzpicture}[baseline=-.65ex]
  \matrix[diagram](m) { \stG_n(N+M-1) & \Omega(\FM_n(N+M-1)) \\
  & \Omega(\FM_n(N)\times \FM_n(M)) \\
  \stG_n(N) \otimes \stG_n(M) &  \Omega(\FM_n(N)) \otimes \Omega(\FM_n(M)) \\ };
  \draw[-latex] (m-1-1) edge node[auto] {$F$} (m-1-2) edge node[left] {$\Delta_i$} (m-3-1)
                (m-3-1) edge node[auto] {$F\otimes F$} (m-3-2)
                (m-3-2) edge (m-2-2) 
                (m-1-2) edge node[auto] {$\circ_i^*$} (m-2-2);
 \end{tikzpicture}
\end{equation}
Here $\circ_i$ is the $i$-th operadic composition and $\Delta_i$ the $i$-th cooperadic cocomposition.

% 
% 
% \section{The hairy graph homology as decorated graph homology}
% 
% 
% 
% 
% \subsection{Even codimension}
% \begin{thm}
% In even codimension the hairy graph homology equals the homology of the complex of graphs $\Gamma$ decorated by $S(H(\Gamma)[1])$... (TODO) 
% \end{thm}
% \subsection{Odd codimension}
% \begin{thm}
% In odd codimension the hairy graph homology splits as follows
% \[
% H(\HGC_{m,n}) = H(\GC_n)[-1]\oplus \bigoplus H_r \oplus T \oplus  C_0 \oplus C_1
% \]
% where the bracket with the tripod class $T$ induces an isomorphism $C_0 \to C_1$.
% (TODO: more detail)
% \end{thm}

\section{Proof of Theorem \ref{thm:main} for $k\geq 2$} \label{sec:proof k geq 2}
In this section we will show Theorem \ref{thm:main} for $k\geq 2$. The proof is based on two Lemmas.
\begin{lemma}\label{lem:grvanishing}
Consider the composition 
\[
C(\FM_n) \to C(\FM_{n+k}) \to \Graphs_{n+k}.
\]
If $k\geq 2$ then the image is contained in the sub-operad $\Com$ of graphs without edges. (It is one-dimensional in each arity.) 
\end{lemma}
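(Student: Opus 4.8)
The plan is to show that in the image of the composite, the coefficient of every graph $\Gamma\in\Graphs_{n+k}(N)$ carrying at least one edge vanishes, leaving only the edgeless graphs, which are exactly the one-dimensional pieces $\Com(N)\subset\Graphs_{n+k}(N)$ (edgeless forces, by trivalence, no internal vertices). Pushing a chain $c\in C(\FM_n(N))$ forward along $\iota\colon\FM_n(N)\hookrightarrow\FM_{n+k}(N)$ and applying \eqref{equ:Kmap}, the coefficient of $\Gamma$ equals $\int_c \iota^*F(\Gamma)$, where $F(\Gamma)$ is the Kontsevich form. Since $F$ preserves the commutative (Hopf) product, $F$ is multiplicative over the internally connected components of $\Gamma$, so $\iota^*F(\Gamma)=\prod_\gamma \iota^*F(\gamma)$ over the internally connected pieces $\gamma\in\ICG_{n+k}$. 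It therefore suffices to prove that $\iota^*F(\gamma)=0$ whenever $\gamma$ has at least one edge.

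I would split this into two cases. If $\gamma$ is a single edge between two external vertices $i,j$, then $F(\gamma)=\pi_{ij}^*\Omega_{S^{n+k-1}}$, and along $\iota$ the direction map $\pi_{ij}$ takes values in the equatorial subsphere $S^{n-1}\subset S^{n+k-1}$; pulling back the top form $\Omega_{S^{n+k-1}}$ (of degree $n+k-1$) to the $(n-1)$-dimensional $S^{n-1}$ gives zero as soon as $k\geq 1$. If instead $\gamma$ has $j\geq 1$ internal vertices and meets $N_\gamma$ external vertices, I would argue by a degree count. Writing $e$ for the number of edges, $F(\gamma)$ is obtained on $\FM_{n+k}(N_\gamma)$ by integrating a product of $e$ forms of degree $n+k-1$ along the fiber of $\FM_{n+k}(N_\gamma+j)\to\FM_{n+k}(N_\gamma)$, of dimension $(n+k)j$; hence $F(\gamma)$ is homogeneous of degree $(n+k-1)e-(n+k)j$. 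As $\iota^*$ preserves form degree, $\iota^*F(\gamma)$ is a form of this degree on $\FM_n(N_\gamma)$, whose dimension is $nN_\gamma-n-1$, and it vanishes once the degree strictly exceeds this dimension.

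The crux is thus the inequality
\[
(n+k-1)e-(n+k)j \;>\; nN_\gamma-n-1 \qquad (k\geq 2).
\]
Here I would invoke two structural constraints on $\gamma$: since $\gamma$ together with its legs is connected, its first Betti number $b$ obeys $e=b+j+N_\gamma-1$; and since internal vertices are at least trivalent while each of the $N_\gamma$ external vertices has valence $\geq 1$ (and no edge joins two external vertices), counting edge-endpoints gives $N_\gamma\leq 2e-3j$, which together with the Euler relation yields $j\leq 2b+N_\gamma-2$. Substituting the Euler relation, the difference of the two sides becomes $(n+k-1)b+(k-1)N_\gamma-j+(2-k)$, and feeding in $-j\geq -2b-N_\gamma+2$ bounds it below by $(n+k-3)b+(k-2)N_\gamma+(4-k)$. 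For $n\geq 2$, $k\geq 2$ and $N_\gamma\geq 1$ the first term is $\geq 0$, while $(k-2)N_\gamma+(4-k)\geq 2$, so the difference is at least $2>0$, as needed.

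I expect the main obstacle to be precisely this degree bookkeeping: ensuring the bound is strict and uniform in $k\geq 2$. The extremal configuration is the tripod (one internal vertex, three external legs), for which the two sides of the inequality coincide \emph{exactly} when $k=1$; this is where the argument degenerates in codimension one and already anticipates the non-formality established later for $k=1$. A secondary point needing care is the passage between $F(\gamma)$ on $\FM_{n+k}(N_\gamma)$ and its restriction to $\FM_n(N_\gamma)$, namely that external vertices untouched by $\gamma$ may be forgotten and that $\iota^*$ commutes with the relevant forgetful maps; this is routine given the compatibility \eqref{equ:compat}, but should be stated so that the degree comparison is carried out on the correct configuration space.
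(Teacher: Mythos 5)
Your proposal is correct and takes essentially the same route as the paper's proof: a fiber-integration degree count, using the Euler relation $e=b+j+N_\gamma-1$ together with the trivalence/univalence bound on edge-endpoints, to show the degree of the restricted form strictly exceeds $\dim\FM_n(N_\gamma)=nN_\gamma-n-1$ once $k\geq 2$ (the paper decomposes by connected rather than internally connected components and absorbs the external--external edge case into the same count instead of your equatorial-subsphere argument, but these differences are cosmetic). One small point: your restriction to $n\geq 2$ in the final estimate is unnecessary and leaves out a case the lemma is used for --- when $n=1$ and $k\geq 2$ the first term $(n+k-3)b=(k-2)b$ is still $\geq 0$, so your lower bound of $2$ holds verbatim for all $n\geq 1$.
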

\begin{proof}
One has to check that for each graph $\Gamma\in \Graphs_{n+k}(r)$ with at least one edge the configuration space integrals 
\[
\int_c  \pi^* \omega_\Gamma
\]
vanish for each chain $c$ in $C(\FM_n(r))$. In fact we claim that $\pi^* \omega_\Gamma=0$.
It is sufficient to consider connected 
%and internally connected 
graphs for otherwise the differential form is just a product of pullbacks of those of its  connected components.
%(Remark: Internally connected means that the graph is connected after deleting all external vertices.)

Roughly speaking the lowest degree of such form happens when $\Gamma$ is a uni-trivalent tree,  but let us still give a formal argument. Suppose that $\Gamma$ has $v$ internal vertices and $e$ edges. Since each internal vertex has to be at least trivalent and each external vertex has to be at least univalent, there is an inequality\footnote{In fact, since a blow up of a vertex in a graph decreases the degree,  one can assume without loss of generality that $\Gamma$ is uni-trivalent.}
\begin{equation}\label{equ:inequ1}
e \geq \frac 3 2 v + \frac 1 2 r.
\end{equation}
Now assume that the first Betti number of $\Gamma$ is $j$. By connectedness, one has
\begin{equation}\label{equ:b1}
e-v-r+1=j.
\end{equation}
Equivalently~\eqref{equ:b1} can be written as
\begin{equation}\label{equ:e}
e=v+r+j-1,
\end{equation}
hence
\begin{equation}\label{equ:inequ2}
v\leq r+2j-2.
\end{equation}
Applying \eqref{equ:e} and~\eqref{equ:inequ2} to the degree of the differential form $\omega_\Gamma$, we   obtain
\[
 \mathit{deg}(\omega_\Gamma)
=
(n+k-1)e - (n+k)v
=
(n+k-1)(r+j-1)-v
\geq 
n(r-1)+(n-1)j+(k-2)(r+j-1)+1.
\]
On the other hand, $\dim(\FM_n(r))=n(r-1)-1$. Hence for $k\geq 2$ we find that the restriction of $\omega_\Gamma$ on $\FM_n(r)$ is zero as claimed.

\end{proof}

We can use the above Lemma to show Theorem \ref{thm:main} for $k\geq 2$ as follows.
\begin{proof}[Proof of Theorem \ref{thm:main} for $k\geq 2$]
Consider the diagram
\begin{equation}\label{equ:formalitydiagram}
\begin{tikzpicture}[baseline=-.65ex]
\matrix[diagram](m) { C(\FM_n) & C(\FM_{n+k}) \\
\Graphs_n & \Graphs_{n+k} \\
\e_n & \e_{n+k} \\ };
\draw[-latex] (m-1-1) edge (m-1-2) edge (m-2-1)
		    (m-1-2) edge (m-2-2)
		    (m-2-1) edge (m-2-2)
		    (m-3-1) edge (m-3-2) edge (m-2-1)
		    (m-3-2) edge (m-2-2);
\end{tikzpicture}
\end{equation}
where the middle horizontal arrow sends all graphs with edges to zero, and the other arrows were introduced above. In case $n=1$, instead of $\Graphs_n$ in~\eqref{equ:formalitydiagram} one should use $\e_1$, and the upper left arrow is the augmentation morphism.
The lower square of the diagram obviously commutes.
The upper square commutes by Lemma \ref{lem:grvanishing}, as long as $k\geq 2$. Since all the vertical arrows are quasi-isomorphisms Theorem \ref{thm:main} hence follows in this case. 
\end{proof}

To attack the case $k=1$ we need additional results introduced in section \ref{sec:defcomplex}.

\subsection{Hopf (co)operadic version}
Note that the maps in the diagram \eqref{equ:formalitydiagram} may more or less obviously be pre-dualized to form a diagram
\[
\begin{tikzpicture}
\matrix[diagram](m) { \Omega(\FM_n) & \Omega(\FM_{n+k}) \\
\stG_n & \stG_{n+k} \\
\e_n^* & \e_{n+k}^* \\ };
\draw[latex-] (m-1-1) edge (m-1-2) edge (m-2-1)
		    (m-1-2) edge (m-2-2)
		    (m-2-1) edge (m-2-2)
		    (m-3-1) edge (m-3-2) edge (m-2-1)
		    (m-3-2) edge (m-2-2);
\end{tikzpicture}
\]
The maps in the lower square are Hopf cooperad maps. The upper three arrows are maps of $\bbS$-modules in commutative algebras, which in addition satisfy a compatibility condition with the operadic structure on $\FM_n$ and $\FM_{n+k}$ of the form \eqref{equ:compat}. Again, in case $n=1$, instead of $\stG_n$ in the diagram above  one should use $\e_1^*$.

\section{Homology of the deformation complex}\label{sec:defcomplex}

\subsection{An auxiliary map $\hoPoiss_{n-1}\to \Graphs_n^2$}
Consider the map $\Phi_0: \hoPoiss_{n-1}\to \Graphs_n^2$ defined as follows:
\begin{itemize}
 \item The commutative product generator $m_2\in \hoPoiss_{n-1}(2)$ is sent to the graph with two vertices and no edge.
 \item The $2k+1$-ary $\hoLie_n$ generator $\mu_{2k+1}$ is sent to the graph
 \[
  \frac 1 {4^{k}}\
  \begin{tikzpicture}[baseline=-.65ex]
   \node[int] (v) at (0,1) {};
   \node[ext] (v1) at (-1,0) {$\scriptstyle 1$};
   \node (v2) at (0,0) {$\cdots$};
   \node[ext] (v3) at (1,0) {$\scriptstyle 2k+1$};
   \draw (v) edge (v1) edge (v2) edge (v3);
  \end{tikzpicture}
 \]
\item All other generators are sent to zero.
\end{itemize}

%The factor $\frac 1 {4^k}$ appearing in the above formula is conventional and irrelevant for most of the statements of this section.

\begin{lemma}
 $\Phi_0$ is indeed a map of operads.
\end{lemma}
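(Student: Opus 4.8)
The claim is that the prescribed assignment $\Phi_0$ extends to a genuine operad map $\hoPoiss_{n-1}\to\Graphs_n^2$. Since $\hoPoiss_{n-1}=\Omega(\Poiss_{n-1}^\vee)$ is a cobar construction, it is a \emph{quasi-free} operad generated by the (shifted) co-generators of $\Poiss_{n-1}^\vee$, so a map out of it is freely determined by its values on generators, and the only condition to verify is compatibility with the differentials. Concretely, writing $d_{\hoPoiss}$ for the cobar differential and $d_{\Graphs}$ for the twisted differential on $\Graphs_n^2$, I must check
\[
\Phi_0\circ d_{\hoPoiss} = d_{\Graphs}\circ \Phi_0
\]
on each generator $m_2$ and $\mu_{2k+1}$, after which the relation propagates to all of $\hoPoiss_{n-1}$ by freeness and the Leibniz rule.

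\textbf{Key steps.} First I would recall that the cobar differential on $\hoPoiss_{n-1}$ splits into the part dual to the (co)products in $\Poiss_{n-1}^\vee$, which expresses $d\,\mu_{2k+1}$ and $d\,m_2$ as sums of two-vertex tree compositions of lower generators. On the product generator $m_2$ the differential vanishes (it is a cocycle) and $\Phi_0(m_2)$ is the edgeless two-vertex graph, on which $d_{\Graphs}$ also vanishes, so this case is immediate. The substantial case is $\mu_{2k+1}$: here $d_{\hoPoiss}\mu_{2k+1}$ is a sum of compositions of two lower $\hoLie_{n-1}$-generators (the co-Lie part of the coproduct) plus terms mixing in $m_2$; applying $\Phi_0$ kills everything involving a generator other than $m_2$ or the $\mu$'s and leaves explicit sums of star-graphs glued at one leg. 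On the other side, $d_{\Graphs}\Phi_0(\mu_{2k+1})$ is computed by applying the twisted graph differential to the single star graph with one internal vertex and $2k+1$ legs: the vertex-splitting part of $d_{\Graphs}$ breaks the central vertex into two adjacent internal vertices in all ways, producing exactly the two-internal-vertex graphs that must match the image of the cobar differential. The normalizing coefficient $\tfrac{1}{4^k}$ is precisely what is needed to make these two sums agree; I would verify the recursion on the coefficients by matching a split with $a$ legs on one side and $2k+1-a$ on the other against the coproduct multiplicities, using that the bracket in $\Poiss_{n-1}^\vee$ is the relevant cogenerator and that all other $\Graphs_n^2$-differential terms (edge contractions creating an external-only component, or creating a bivalent vertex) either vanish or cancel in pairs by the symmetry of the star.

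\textbf{Main obstacle.} The only real work is the signs and the combinatorial coefficient bookkeeping in the $\mu_{2k+1}$ computation: one must confirm that the multiplicities arising from splitting a $(2k+1)$-valent internal vertex, weighted by $\tfrac14$ per split, reproduce exactly the structure constants of the cobar differential coming from the coproduct on $\Poiss_{n-1}^\vee$, and that the graded signs (which depend on the parity of $n$ through the edge- and vertex-orderings in $\Graphs_n^2$) line up with the Koszul signs in $\Omega(\Poiss_{n-1}^\vee)$. I expect this to reduce, after fixing conventions, to the same Maurer--Cartan/twisting computation that underlies the standard map $\hoLie_n\to\Graphs_n$; indeed the restriction of $\Phi_0$ to the $\hoLie_{n-1}$-generators should coincide up to normalization with the known map realizing $\hoLie$ inside the twisted graph operad, so I would phrase the verification so as to quote that known compatibility and only check the single additional relation coming from the $m_2$-generator, namely that $d_{\Graphs}$ on the star graph produces no term with a bivalent internal vertex attaching to the product, which is where the restriction to $\Graphs_n^2$ (bivalent internal vertices allowed) rather than $\Graphs_n$ matters.
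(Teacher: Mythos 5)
Your overall framework is the right one: since $\hoPoiss_{n-1}=\Omega(\Poiss_{n-1}^\vee)$ is quasi-free, one only has to verify the chain-map condition on generators. The gap is that you have misidentified the generators. The cobar construction is free on the \emph{whole} coaugmentation coideal $\overline{\Poiss_{n-1}^\vee}$, i.e.\ on the duals of all Poisson monomials, not just on $m_2$ and the $\mu_{2k+1}$; the remaining generators (the binary bracket $\mu_2$, the even $\mu_{2k}$, the higher products $m_N$, and all mixed product-bracket cogenerators) are sent to zero by $\Phi_0$, and on such a generator $x$ the condition reads $\Phi_0(d_{\hoPoiss}x)=0$. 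This is \emph{not} automatic, because $d_{\hoPoiss}x$ may contain two-vertex trees both of whose vertices have nonzero image. The crucial case is the cogenerator dual to $[x_{i_1},[\dots ,x_{i_{2k+1}}]\dots]\wedge x_j$ (arity $2k+2$, dual Hodge degree one): its cobar differential contains the co-Leibniz terms $m_2\circ_i\mu_{2k+1}$ and $\mu_{2k+1}\circ_j m_2$, whose images under $\Phi_0$ are individually nonzero and must cancel. This cancellation is exactly the Poisson compatibility, and it is the first step of the paper's proof: because all external vertices of the star graphs have valence one, these graphs act as derivations in each slot with respect to the product, so all relations mixing $m_2$ with the $\mu_k$ are respected. (Similarly, for $x=m_N$, $N\geq 3$, one needs the strict associativity and commutativity of the edgeless-graph product.) Your outline never addresses these generators, so as written the proof is incomplete.

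A related confusion: you assert that $d_{\hoPoiss}\mu_{2k+1}$ contains ``terms mixing in $m_2$'' which are then ``killed'' by $\Phi_0$. Both halves are wrong. The cobar differential preserves the Hodge degree, so $d_{\hoPoiss}\mu_{2k+1}$ consists only of compositions $\mu_a\circ_i\mu_b$ of Lie cogenerators; and had $m_2$-terms occurred there, they would \emph{not} be killed, since $\Phi_0(m_2)\neq 0$ --- they live instead in the differentials of the mixed generators discussed above. Once those are disposed of, your remaining computation on the $\mu_{2k+1}$ --- matching vertex splitting of the star against the cobar differential --- is exactly the paper's verification that $\alpha=\sum_{k\geq 1}4^{-k}\,(\text{star with }2k+1\text{ legs})$ is a Maurer--Cartan element in the hairy complex $\HGC_{n-1,n}$, with the binomial coefficient $\binom{k+l}{k}$ counting leg distributions on one side and shuffles on the other. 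Two caveats there: the coefficients are not ``precisely what is needed'' --- any geometric sequence $c^k$ satisfies the Maurer--Cartan equation, and the value $\tfrac14$ is only pinned down later by comparison with the Kontsevich integral (Lemma \ref{lem:fundimage}); and there is no previously ``known map realizing $\hoLie$ inside the twisted graph operad'' that you could quote for this, since the standard map $\hoLie_{n-1}\to\Graphs_n$ factoring through $\Com$ sends all $\mu_r$ to zero --- the Maurer--Cartan property of the star sum is precisely what has to be, and in the paper is, checked by hand.
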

\begin{proof}
First note that the graphs in the images of each $\mu_{2k+1}$ are such that the external vertices all have valence 1, and are hence derivations in each slot for the product $m_2$.
The $\hoPoiss_{n-1}$-relations involving both the product $m_2$ and the $\mu_k$ are hence satisfied.
It suffices to check the $\hoLie_n$-relations.
These relations are equivalent to stating that the element 
\begin{equation}\label{equ:alphadef}
\alpha=
 \sum_{k\geq 1} 
 \frac 1 {4^k }
 \underbrace{
 \begin{tikzpicture}[baseline=-.65ex, scale=.5]
   \node[int] (v) at (0,1) {};
   \coordinate (v1) at (-1,0);
   \node (v2) at (0,0) {$\cdots$};
   \coordinate (v3) at (1,0);
   \draw (v) edge (v1) edge (v2) edge (v3);
  \end{tikzpicture}
  }_{2k+1\text{ legs}}
\end{equation}
is a Maurer-Cartan element in the hairy graph complex from section \ref{sec:HGC}.
The Maurer-Cartan equation reads
\begin{align*}
 -\delta \alpha +\frac 1 2 [\alpha,\alpha]&=-\delta + \alpha\circ \alpha \\
 &=
 \sum_{k,l} \left( -{k+l \choose k} + {k+l \choose k} \right)
 \begin{tikzpicture}[baseline=-.65ex, scale=.5]
  \node[int](v) at (0,1) {};
  \node[int](w) at (1,0) {};
  \draw (v) edge (w) edge +(-1.5,-1.5) edge +(0,-1.5) edge +(-.75,-1.5)
  (w) edge +(.5,-.5) edge +(0,-.5) edge +(1,-.5);
  \node at (-.75,-0.75) {$k\times$};
  \node at (1.5,-.75) {$l\times$};
 \end{tikzpicture}
 \\
 &=0.
\end{align*}
Here the first term in the sum is produced by $\delta$, there are ${k+l \choose k}$ ways of connecting the legs to one of the two vertices produced. The second term uses the pre-Lie product $\circ$ from \eqref{equ:circ}, and the prefactor counts the number of $(k,l)$-shuffle permutations.
\end{proof}

\begin{rem}
 The map $\Phi_0$ clearly factorizes through $\Graphs_n\subset \Graphs_n^2$. However, we will see below that allowing for bivalent internal vertices makes the theory more uniform.
\end{rem}

\subsection{The deformation complex of $\Phi_0$}
Consider the deformation complex $\Def(\Phi_0)$. It is obtained by twisting the deformation complex 
\[
 \Def(\hoPoiss_{n-1}\stackrel{*}{\to} \Graphs_n^2)
\]
by the Maurer-Cartan element $\alpha$ corresponding to $\Phi_0$. In fact, this Maurer-Cartan element lives in the sub-dg Lie algebra $\HGC_{n-1,n}\subset \fHGC_{n-1,n}\subset\Def(\hoPoiss_{n-1}\stackrel{*}{\to} \Graphs_n^2)$.
Hence we have a quasi-isomorphism of dg Lie algebras
\[
 \fHGC_{n-1,n}^\alpha \to \Def(\Phi_0)
\]
between the full hairy graph complex twisted by $\alpha$ and the deformation complex of $\Phi_0$.

The goal of this subsection is to show the following Theorem.
\begin{thm}\label{thm:Phi0}
\begin{equation}\label{equ:DefPhi0}
 H(\Def(\Phi_0)) 
 \cong S^+\left(\Bigl(H(\GC_n^2) \oplus \K T\Bigr)[n]\right)[1-n]
 \cong S^+\left(\Bigl(H(\GC_n) \oplus \prod_{1\leq r\equiv 2n+1 \text{ mod $4$}}\K L_r \oplus \K T\Bigr)[n]\right)[1-n]
\end{equation}
where the $L_r$ correspond to the loop classes and $T$ to the tripod class, and $S^+$ denotes the completed symmetric algebra without constant term.
\end{thm}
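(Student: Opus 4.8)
The plan is to reduce Theorem~\ref{thm:Phi0} to a homology computation for the twisted \emph{connected} hairy graph complex, and then to compare that complex with the ordinary graph complex $\GC_n^2$.

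First I would exploit the identification $\Def(\Phi_0)\simeq (\fHGC^2_{n-1,n})^\alpha$ together with the decomposition of complexes $\fHGC^2_{n-1,n}\cong S^+(\HGC^2_{n-1,n}[n-1])[1-n]$ recalled in Section~\ref{sec:HGC} (which holds at the level of complexes for every $m=n-1\geq1$). The twisting term $[\alpha,-]$ is built from the Lie bracket, which combinatorially connects a hair of one graph to another graph; since the cup product is disjoint union of graphs, $[\alpha,-]$ obeys the Leibniz rule for it and is hence a derivation of the symmetric algebra. Moreover $\alpha$ is connected, so $[\alpha,-]$ sends connected graphs to connected graphs. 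Therefore the symmetric-algebra decomposition survives the twisting, giving an isomorphism of complexes $(\fHGC^2_{n-1,n})^\alpha\cong S^+\bigl((\HGC^2_{n-1,n})^\alpha[n-1]\bigr)[1-n]$. Passing to homology (using that $S^+$ commutes with homology over a field of characteristic zero) reduces the theorem to the claim
\[
H\bigl((\HGC^2_{n-1,n})^\alpha\bigr)\cong \bigl(H(\GC_n^2)\oplus \K T\bigr)[1].
\]
The second displayed isomorphism in the statement then follows by substituting the decomposition of $H(\GC_n^2)$ recalled in Section~\ref{sec:HGC}.

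To prove this claim I would first observe that both the internal differential $\delta$ and the twisting term $[\alpha,-]$ preserve the first Betti number $b$, so the complex splits as a product over $b$. In loop order $b=0$ the twisting preserves the tree part, whose $\delta$-homology is one-dimensional and spanned by the tripod; checking that the tripod is twisted-closed and is not a twisted boundary produces the summand $\K T[1]$. For $b\geq1$ I would set up the complete, bounded-below filtration by the number of hairs $h$: the differential $\delta$ preserves $h$, while $[\alpha,-]$ \emph{strictly increases} $h$ (attaching a star always creates new hairs), so that Lemma~\ref{lem:SpecSeqConv} applies. The expected picture is that hair-creation is acyclic in positive hair-number once $b\geq1$, so the homology localizes on the minimal-hair stratum and is identified with $H(\GC_n^2)|_b$, the residual shift $[1]$ recording the single leftover hair in codimension one.

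The crux — and the main obstacle — is precisely this acyclicity. Concretely, I expect to isolate an auxiliary acyclic ``hair-creation'' complex together with a contracting homotopy that exists for $b\geq1$ but is obstructed exactly by the tripod when $b=0$; equivalently, to exhibit a quasi-isomorphism between the minimal-hair sub-quotient of $(\HGC^2_{n-1,n})^\alpha$ and $\GC_n^2[1]$ intertwining the edge-contraction and twisted differentials. Controlling the bivalent-vertex strata, which is what produces the loop classes $L_r$ inside $H(\GC_n^2)$, and matching the degree shift are the delicate points; the remainder is degree bookkeeping organized by the spectral sequence of Lemma~\ref{lem:SpecSeqConv}.
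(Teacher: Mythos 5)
Your preliminary reductions are sound and coincide with the paper's: the identification $\Def(\Phi_0)\simeq(\fHGC_{n-1,n}^2)^\alpha$, the fact that $[\alpha,-]$ is a derivation of the disjoint-union (cup) product preserving connectedness, so that the twisted complex is $S^+$ of the twisted connected part, and the splitting by loop order are all correct and are used (explicitly or implicitly) in the paper. One slip in the tree part: the tripod itself is \emph{not} closed for the twisted differential $\delta+[\alpha,-]$; the actual representative is the series $\sum_{k\geq 1}\frac{2k}{4^k}\,(\text{star with $2k+1$ hairs})$ of \eqref{equ:Tgen}, so even in loop order zero one argues via the untwisted computation of \cite{Turchin3} and a filtration argument, not by a direct check on the tripod.

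The genuine gap is exactly where you flag it, and it is not a gap that "degree bookkeeping" can close. Your filtration by the number of hairs has as associated graded the \emph{untwisted} complex $\HGC^2_{n-1,n}$, so the first page of your spectral sequence is the full untwisted hairy graph homology. This homology is unknown (only loop orders $\leq 2$ have been computed: the hedgehog classes in loop order one, and the two-loop classes of \cite{Costello}), and it is far larger than $H(\GC_n^2)$ in every loop order; showing that the higher differentials, induced by bracketing with the stars, kill everything except one copy of $H(\GC_n^2)$ is essentially equivalent to the theorem itself. There is also no visible contracting homotopy, because hairs are degree-neutral precisely when $m=n-1$. The paper avoids this dead end with a different localization: it first \emph{enlarges} the complex to $\HGC^+_{n-1,n}=\HGC^2_{n-1,n}\oplus\GC_n^2$ by adjoining hairless graphs, and then filters by the number of \emph{core} vertices (the graph left after deleting hairs and recursively deleting univalent vertices) rather than by hairs. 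On the associated graded the differential only rearranges the trees hanging off each core vertex, so the computation localizes to a single-vertex complex $W$ with $H(W)\cong\K e_+\oplus\K e_-$, where $e_\pm=\sum_j(\pm 1/2)^j(\text{$j$ hairs})$. This reduces the problem to a complex $\GC_n^\pm$ of graphs with $e_\pm$-labelled vertices; a second (skeleton) filtration shows that edges joining opposite labels are acyclic while equal labels contribute, giving $H(\GC_n^\pm)\cong H(\GC_n^2)\oplus H(\GC_n^2)$; finally the short exact sequence $0\to\HGC^2_{n-1,n}\to\HGC^+_{n-1,n}\to\GC_n^2\to 0$ of twisted complexes recovers $H((\HGC^2_{n-1,n})^\alpha)\cong\K\oplus H(\GC_n^2)$, with the $\GC_n^2$-classes represented by $\Gamma\mapsto\Gamma_+-\Gamma_-$. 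These three ingredients --- the enlargement by hairless graphs, the core-vertex filtration with its local $e_\pm$ computation, and the concluding exact sequence --- are the missing ideas; note in particular that the comparison map needed even to invoke Lemma~\ref{lem:SpecSeqConv} in your scheme (a copy of $\K\oplus\GC_n^2$ mapped into the twisted complex) is itself an output of this construction, not an input you can assume.
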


\begin{rem}
Note that the cup product on $\Def(\Phi_0)$ is combinatorially realized as the union of graphs.
\end{rem}

\begin{rem}\label{r:oddpod_class}
Concretely, a cocycle representing the tripod class $T$ is the following:
\begin{equation}\label{equ:Tgen}
 \sum_{k\geq 1} \frac{2k}{4^k}
 \underbrace{
\begin{tikzpicture}[baseline=-.65ex]
\node[int] (v) at (0,0) {};
\draw (v) edge +(-.5,-.5)  edge +(-.3,-.5) edge +(0,-.5) edge +(.3, -.5) edge +(.5,-.5);
\end{tikzpicture}
}_{2k+1}.
\end{equation}

The class corresponding to the cocycle $\gamma\in \GC_n^2$ is represented by a hairy graph cocycle as follows:
\begin{equation}\label{equ:gamma_img}
\gamma\mapsto \sum_{k\geq 0} \frac{1}{4^k} 
\sum 
\underbrace{
\begin{tikzpicture}[baseline=-.65ex]
\node (v) at (0,0) {$\gamma$};
\draw (v) edge +(-.5,-.5)  edge +(-.3,-.5) edge +(0,-.5) edge +(.3, -.5) edge +(.5,-.5);
\end{tikzpicture}
}_{2k+1}.
\end{equation}
Here the sum is over all ways of connecting the legs to vertices of $\gamma$.

It will be shown in Proposition \ref{prop:primaction} below that these cocycles corresponding to the generating classes arise from the action of the Lie algebra $\K \ltimes \GC_n^2$ on the operad $\Graphs_n^2$ by derivations. 
\end{rem}

\subsection{Proof of Theorem \ref{thm:Phi0}}
We will work with the hairy graph complex. Our eventual goal is to show that 
\[
H((\HGC_{n-1,n}^{2})^\alpha)\cong \K\oplus H(\GC_n^2)
\]
where $(\HGC_{n-1,n}^{2})^\alpha$ is the hairy graph complex with the differential obtained by twisting with the Maurer-Cartan element $\alpha$ defined in \eqref{equ:alphadef}.
First note that there is a splitting of complexes of $(\HGC_{n-1,n}^{2})^\alpha$ into a subcomplex of trees, and a subcomplex of non-trees, say
\[
 (\HGC_{n-1,n}^{2})^\alpha = U_{trees}\oplus U_{non-trees}.
\]
The subcomplex of trees has one-dimensional homology (being spanned by the tripod class) already before twisting by $\alpha$, see \cite[Subsection~3.5]{Turchin3}. Hence, the subcomplex of trees has one-dimensional homology and contributes the summand $H(U_{trees})\cong\K$ above. Remark~\ref{r:oddpod_class} explicitly describes the corresponding cocycle. We can hence focus on the subcomplex of non-tree graphs and disregard tree graphs below.

For technical reasons, let us enlarge the complex $\HGC_{n-1,n}^{2}$ to include also (non-tree) graphs without hairs, i.~e., set $\HGC_{n-1,n}^+ := \HGC_{n-1,n}^2\oplus \GC_n^2$. The dg Lie algebra  structure naturally extends to $\HGC_{n-1,n}^+$.
Our first goal will be to compute the homology of the twisted complex
\[
 (\HGC_{n-1,n}^+)^\alpha.
\]

Let $\Gamma\in \HGC_{n-1,n}^{+}$ be a graph.
Let the \emph{core} of $\Gamma$ be the graph obtained by removing the hairs and removing recursively univalent vertices and their adjacent edges. In other words, we may view $\Gamma$ as its core, with some trees attached, for example:
\[
 \text{graph: }
  \begin{tikzpicture}[baseline=-.65ex, scale=.35]
  \node[int](v) at (-1,-1) {};
  \node[int](w) at (-1,1) {};
  \node[int](a) at (1,-1) {};
  \node[int](b) at (1,1) {};
  \node[int](c) at (-2,2) {};
  \draw (v) edge (w) edge (a) (b) edge (a) edge (w) (w) edge (c) edge +(0,.75) (b) edge +(0,.75) edge +(.75,0) edge +(.75,.75) (c) edge +(0,.75) edge +(-.75,0);
 \end{tikzpicture}
 \mapsto
 \text{core: }
 \begin{tikzpicture}[baseline=-.65ex, scale=.35]
  \node[int](v) at (-1,-1) {};
  \node[int](w) at (-1,1) {};
  \node[int](a) at (1,-1) {};
  \node[int](b) at (1,1) {};
  \draw (v) edge (w) edge (a) (b) edge (a) edge (w);
 \end{tikzpicture}
\]

We endow $\HGC_{n-1,n}^+$ with a filtration on the number of core vertices.
The first differential in the associated spectral sequence will leave the number of core vertices invariant, but changes the attached trees.
The associated graded complex hence splits
\[
 gr\, \HGC_{n-1,n}^+\cong \prod_{\gamma} V_\gamma
\]
where the direct product runs over all isomorphism classes of core graphs and $V_\gamma$ is the complex associated to the core.
More precisely, we may pick a representative graph of the isomorphism class, say $\gamma'$, and identify
\[
 V_\gamma \cong \left(\bigoplus_{v\in V\gamma'} W \right)^{G_{\gamma'}}
\]
where the direct sum runs over vertices of $\gamma'$, the complex $W$ is made of the trees attachable to one vertex and the isomorphism group $G_{\gamma'}$ of $\gamma'$ acts by permutations, with appropriate signs.
Since taking homology interchanges with taking invariants with respect to a finite group action, it suffices to compute $H(W)$.
\begin{lemma}
 $H(W)\cong \K e_+\oplus \K e_-$ where the two classes correspond to decorations of the vertex of the form
 \begin{align*}
  e_\pm = \sum_{j\geq 0} \left(\frac{\pm 1}{2}\right)^j 
  \underbrace{
  \begin{tikzpicture}
   \node[int, label={$v$}] (v) at (0,0) {};
   \draw (v) edge +(-.5,-.5) edge +(-.33,-.5)  edge +(.33,-.5) edge +(.5,-.5);
  \end{tikzpicture}
  }_{j\times}.
 \end{align*}
\end{lemma}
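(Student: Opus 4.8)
The plan is to describe $W$ explicitly as a complex of rooted trees, to record how the differential interacts with the vertex count, and then to isolate the ``corolla'' part on which the classes $e_\pm$ live. Concretely, a basis element of $W$ is a rooted tree whose root is the chosen core vertex $v$, whose interior vertices have valence $\geq 3$ (resp.\ $\geq 2$ in the bivalent variant), and which carries hairs at the remaining half-edges; the differential is the restriction to this single-vertex piece of $D=\delta+[\alpha,-]$, where $\delta$ is the vertex-splitting differential of $\Graphs_n^2$ and $[\alpha,-]$ is the bracket \eqref{equ:circ} with the Maurer--Cartan element $\alpha$ of \eqref{equ:alphadef}. The key structural observation I would use is that vertex splitting and the grafting of a star from $\alpha$ each create exactly one new non-root vertex, so $D$ is homogeneous of degree $+1$ for the grading of $W$ by the number of non-root vertices. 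Writing $W_p$ for the summand with $p$ non-root vertices, $W$ is an honest cochain complex $W_0\to W_1\to\cdots$, and the elements $e_\pm$ lie in $W_0$, the (completed) span of the corollas $s_j$, by which I mean $v$ carrying $j$ hairs and nothing else.

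Next I would exhibit the two classes. Since $D$ raises the non-root vertex count by one, $Ds_j\in W_1$, and I would compute its two contributions: $\delta$ splits $v$ into an edge $v-w$ and distributes the $j$ hairs with the appropriate combinatorial weights, while each $k$-summand of $[\alpha,-]$ grafts the center of a $(2k+1)$-valent star (weight $1/4^{k}$) either onto $v$ or onto one of its hairs. Demanding that the $W_1$-component of $D\bigl(\sum_j c_j s_j\bigr)$ vanish yields a linear relation among the $c_j$. I expect this relation to be exactly the linearization at a corolla of the Maurer--Cartan equation $-\delta\alpha+\tfrac12[\alpha,\alpha]=0$ already established for $\alpha$, so that its characteristic equation is the quadratic $x^2=\tfrac14$ with the two roots $\pm\tfrac12$; hence the solution space is precisely $\K\,(1/2)^j\oplus\K\,(-1/2)^j$. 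This simultaneously shows that $e_\pm$ are cocycles and that $H^0(W)=\ker\bigl(D\colon W_0\to W_1\bigr)$ is two-dimensional.

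It then remains to prove $H^p(W)=0$ for $p\geq 1$, that is, exactness of $W_1\to W_2\to\cdots$. Here I would construct an explicit contracting homotopy on the positive-degree part that ``peels off'' the interior vertex farthest from the root (equivalently, one recognizes the complex of genuine branches hanging off $v$ as a cobar/Koszul-type complex whose acyclicity is standard). The completeness of $W$ in the number of vertices guarantees that this homotopy is well defined on the series that occur, in the spirit of Lemma~\ref{lem:SpecSeqConv}, and once the reduced statement is proved it transports back to $W$ and, after taking $G_{\gamma'}$-invariants, to $V_\gamma$.

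I expect the acyclicity of the branch part to be the main obstacle. The difficulty is that the vertex-splitting differential $\delta$ and the star-grafting terms of $[\alpha,-]$ must be handled \emph{together}: the homotopy has to be compatible with both, and all signs together with the finite-group averaging by $G_{\gamma'}$ must be tracked carefully. I do not, however, anticipate any new conceptual ingredient beyond the explicit homotopy and the quadratic recursion above, so that combining the two-dimensional $H^0$ with the vanishing of $H^{>0}$ gives $H(W)\cong \K e_+\oplus\K e_-$ as claimed.
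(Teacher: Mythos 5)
Your first two steps are sound, and the grading observation is a genuinely nice reformulation: every term of the twisted differential $D=\delta+[\alpha,-]$ acting on a decoration (splitting $v$ or a decoration vertex, grafting a star of $\alpha$ onto a vertex, or replacing a hair by an edge to a star's center) creates exactly one new non-root vertex, so $W$ is indeed a complex $W_0\to W_1\to\cdots$ concentrated in non-root-vertex degrees, and the lemma amounts to $\ker(D\colon W_0\to W_1)=\K e_+\oplus\K e_-$ together with exactness in positive degrees. Your computation of the kernel is essentially correct: the recursion forced by the two-hair relations has solution space spanned by $c_j=(\pm 1/2)^j$, and note that $e_+-e_-$ \emph{is} the Maurer--Cartan element $\alpha$ read as a decoration, so its closedness is literally the Maurer--Cartan equation; the closedness of the even part $e_++e_-$ is a separate (similar) check, which is the part the paper leaves to the reader, so your level of rigor here matches the paper's.

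The genuine gap is your third step, the claim $H^p(W)=0$ for $p\geq 1$, which you yourself identify as ``the main obstacle'' and then do not prove. The statement is true, but it carries all the content of the lemma, and your proposed route --- a single explicit contracting homotopy, compatible \emph{simultaneously} with the splitting differential and with the star-grafting terms of $[\alpha,-]$ --- is not constructed and is harder than necessary. The paper decouples the two kinds of terms first: it filters $W$ by the number of hairs, and since every star term raises the hair count by at least $2$ (the sum in $\alpha$ starts at the tripod), the associated graded differential is pure vertex splitting; for that differential the homology is computed to be two-dimensional (decomposing by the valence of $v$, the relevant differential $W_{\geq 2}\to W_1$ is injective with cokernel spanned by the single hair, in addition to the trivial decoration), and the spectral sequence abuts because the two classes lift to the genuine cocycles $e_\pm$, in the spirit of Lemma~\ref{lem:SpecSeqConv}. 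Your alternative appeal to a ``cobar/Koszul-type complex whose acyclicity is standard'' could at best replace this splitting-only computation (it amounts to Koszulity of $\Com$ plus vanishing of the coinvariants $\Lie(h)_{\bbS_h}$ for $h\geq 2$), but it does not apply to the full differential $D$; to transfer it back you would still need the hair filtration and a convergence argument, i.e., you would be forced into exactly the paper's two-step structure. As written, the positive-degree acyclicity remains an unproven assertion, so the proof is incomplete.
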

\begin{proof}[Proof sketch]
 First one checks that $e_\pm$ are indeed cocycles. We leave it to the reader.\footnote{Hint: check that $e_++e_-$ and $e_+-e_-$ are closed.}
 Then consider a filtration on $W$ by the number of hairs in the decoration.
 The associated graded decomposes as graded vector space as
 \[
  gr\, W = W_0 \oplus W_1 \oplus W_{\geq 2}
 \]
where the subscript indicated the valence of the vertex $v$.
We take the associated spectral sequence whose first differential is a map $W_{\geq 2}\to W_1$. This differential is injective, and the cokernel is spanned by the decoration consisting of a single hair.
Hence on this page we arrive at a two-dimensional vector space, and since the two classes can be represented by cocycles in $W$ the spectral sequence abuts.
\end{proof}

Knowing the homology of $W$, we hence conclude that $H(gr\,\HGC_{n-1,n}^+)$ is spanned by isomorphism classes of graphs without hair, but with each vertex decorated by either a symbol $e_+$ or a symbol $e_-$.
In fact, interpreting $e_\pm$ to be the corresponding decorations above, such graphs form a subcomplex which we denote by 
\[
 \GC_n^\pm \subset \HGC_{n-1,n}^+.
\]
Using Lemma \ref{lem:SpecSeqConv} and the fact that the filtration used is bounded above and complete, we have hence reduced the problem to computing the homology of $\GC_n^\pm$.

Note that the differential on $\GC_n^\pm$ is such that it splits a vertex decorated by $e_+$ into two vertices decorated by $e_+$ and a vertex decorated by $e_-$ into two vertices decorated by $e_-$. Pictorially:
\begin{align*}
 \begin{tikzpicture}[baseline=-.65ex]
  \node[int, label=90:{$e_+$}] (v) at (0,0) {};
  \draw (v) edge +(-.35,0) edge +(-.35, -.35) edge +(-.35, .35);
   \draw (v) edge +(.35,0) edge +(.35, -.35) edge +(.35, .35);
 \end{tikzpicture}
&\mapsto\sum \ 
 \begin{tikzpicture}[baseline=-.65ex]
  \node[int, label=90:{$e_+$}] (v) at (0,0) {};
  \node[int, label=90:{$e_+$}] (w) at (0.5,0) {};
  \draw (v) edge (w);
  \draw (v) edge +(-.35,0) edge +(-.35, -.35) edge +(-.35, .35);
   \draw (w) edge +(.35,0) edge +(.35, -.35) edge +(.35, .35);
 \end{tikzpicture}
 &
  \begin{tikzpicture}[baseline=-.65ex]
  \node[int, label=90:{$e_-$}] (v) at (0,0) {};
  \draw (v) edge +(-.35,0) edge +(-.35, -.35) edge +(-.35, .35);
   \draw (v) edge +(.35,0) edge +(.35, -.35) edge +(.35, .35);
 \end{tikzpicture}
&\mapsto\sum \ 
 \begin{tikzpicture}[baseline=-.65ex]
  \node[int, label=90:{$e_-$}] (v) at (0,0) {};
  \node[int, label=90:{$e_-$}] (w) at (0.5,0) {};
  \draw (v) edge (w);
  \draw (v) edge +(-.35,0) edge +(-.35, -.35) edge +(-.35, .35);
   \draw (w) edge +(.35,0) edge +(.35, -.35) edge +(.35, .35);
 \end{tikzpicture}
\end{align*}

Let $\Gamma\in \GC_n^\pm$ be a graph. We call its \emph{skeleton} the graph obtained by iteratively removing all bivalent vertices and connecting their adjacent edges, until an at least trivalent graph or a graph with only one vertex is produced. Consider only the at least trivalent case for the moment.
The graph $\Gamma$ can be seen as the core, together with a ``decoration'' at each edge, the decoration being a string of bivalent vertices, each labelled by $e_+$ or $e_-$.

Consider a filtration by the number of skeleton vertices. The differential on the associated graded acts indepently on the decorations of the edges. Hence it suffices to compute the homology of the complex of decorations associated to one edge. We have to consider two cases.
\begin{itemize}
 \item The endpoints of the edge have the same labels, i.e., both $e_+$ or both $e_-$.
 \item The endpoints of the edge have the opposite labels, i.e., one endpoint is labelled $e_+$ and one $e_-$.
\end{itemize}
We leave it to the reader to check that in the second case the corresponding complex is acyclic, while in the first case the homology is one-dimensional, the class being represented by a direct edge between the endpoints.
A similar argument also shows that if the skeleton has a single bivalent vertex the same conclusion holds.

Note that if a connected graph does not have all vertex labels alike (i.e., all $e_+$ or all $e_-$), then it necessarily contains an edge between vertices with opposite labels.
Hence we conclude that the homology of the associated graded of $\GC_n^\pm$ consists of two copies of $\GC_n^2$, one embedded by giving all vertices labels $e_+$, and one by giving all vertices labels $e_-$.

Invoking Lemma \ref{lem:SpecSeqConv} again this shows that there is a quasi-isomorphism of complexes
\[
 \GC_n^2\oplus \GC_n^2 \to \GC_n^\pm
\]
and hence a quasi-isomorphism of complexes 
\[
 \K \oplus \GC_n^2\oplus \GC_n^2 \to \HGC_{n-1,n}^+.
\]
Hence to reach our goal of computing $H(\HGC_{n-1,n}^2)$ it suffices to express this space in terms of the homology of $\HGC_{n-1,n}^+$. To this end, consider the short exact sequence
\[
 0\to \HGC_{n-1,n}^2 \to \HGC_{n-1,n}^+ \to \GC_n^2\to 0.
\]
Note also that the induced map in homology $H(\HGC_{n-1,n}^+)\cong \K\oplus H(\GC_n^2)\oplus H(\GC_n^2) \to H(\GC_n^2)$ is obtained from the identity map on each of the $\GC_n^2$-summands and is hence surjective. Thus, we conclude that $H(\HGC_{n-1,n}^2)$ is the kernel of the above map, and can be identified with 
\[
 \K \oplus H(\GC_n^2).
\]
Concretely, the identification may be realized by a quasi-isomorphism
\[
 \K \oplus \GC_n^2 \to \HGC_{n-1,n}^2
\]
which sends a graph $\Gamma\in \GC_n^2$ to $\Gamma_+-\Gamma_-$, where $\Gamma_\pm$ is the graph with all vertices decorated by $e_+$, respectively by $e_-$. This then concludes the proof of Theorem \ref{thm:Phi0}.
\hfill\qed

\subsection{The primitive cocycles originate from derivations of the target.}
Above we have computed the homology of $\Def(\Phi_0)$ in terms of the graphs homology $H(\GC_n^2)$. Although explicit combinatorial formulas for the cocycles representing the homology classes were given, let us trace their origin by showing that the primitive elements are the images of the derivations of the target operad $\Graphs_n^2$ given by $\K S\ltimes \GC_n^2$.

The natural action of $\K S \ltimes \GC_n^2$ on $\Graphs_n^2$ is realized as follows.
$\K S$ (scaling) denotes one class which acts on elements $\gamma\in \GC_n^2$ by multiplication by the Euler characteristic and on $\Gamma\in \Graphs_n^2$ by the number
\[
\#(\text{internal vertices}) - \#(\text{internal edges}).
\]  

The action of a graph $\gamma\in \GC_n^2$ on $\Gamma\in\Graphs_n^2(N)$ is given by the formula  
\[
\gamma \cdot \Gamma = \pm \Gamma \bullet \gamma \pm\gamma_1\circ_1 \Gamma +\sum_{j=1}^N \pm\Gamma \circ_j \gamma_1.
\]
Here $\gamma_1\in \Graphs_n^2(1)$ is obtained by declaring the first vertex of $\gamma$ external and $\Gamma\bullet\gamma$ denotes the operation of inserting $\gamma$ into all internal vertices of $\Gamma$.
The sum over $j$ we will also abbreviate as $\Gamma\circ \gamma_1$.

Hence for any operad map $\op P\to \Graphs_n^2$ we obtain a map 
\[
\K S \ltimes H(\GC_n^2) \to H(\Def(\op P\to \Graphs_n^2))[-1].
\]

\begin{prop}\label{prop:primaction}
The image of $\K S \ltimes H(\GC_n^2)$ in  $H(\Def(\hoPoiss_{n-1}\stackrel{\Phi_0}{\longrightarrow} \Graphs_n^2))[-1]$ may be identified with the primitive part of \eqref{equ:DefPhi0}. Furthermore, the images of the loop classes have representatives (up to prefactors)
\[
H_r
\propto
\sum_{k_1,\dots, k_r\geq 1 \text{ odd}}
(const)
\begin{tikzpicture}[baseline=-.65ex]
\node[int] (v1) at (0:1) {};
\node[int] (v2) at (72:1) {};
\node[int] (v3) at (144:1) {};
\node[int] (v4) at (216:1) {};
\node (v5) at (-72:1) {$\cdots$};
\draw (v1) edge (v2) edge (v5) (v3) edge (v2) edge (v4) (v4) edge (v5);
\draw (v1) edge +(-30:.3) edge +(0:.4) edge +(30:.3) +(0:.5) node  {$k_1$}; 
\draw (v2) edge +(42:.3) edge +(72:.4) edge +(102:.3) +(72:.5) node {$k_2$}; 
\draw (v3) edge +(114:.3) edge +(144:.4) edge +(184:.3) +(144:.5) node {$k_3$}; 
\draw (v4) edge +(196:.3) edge +(226:.4) edge +(256:.3) +(-144:.5) node {$k_4$}; 
\end{tikzpicture}
\]
and hence correspond to the hedgehog classes of the untwisted hairy graph homology $H(\HGC_{n-1,n})$.
\end{prop}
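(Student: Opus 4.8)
The plan is to compute the images of the generating derivations $S$ and $\gamma\in\GC_n^2$ explicitly, match them with the cocycles of Remark~\ref{r:oddpod_class}, and invoke Theorem~\ref{thm:Phi0} to conclude that these cocycles span precisely the primitive part of~\eqref{equ:DefPhi0}. Recall first how the map arises: a derivation $D$ of $\Graphs_n^2$ of homogeneous degree yields the element $D\circ\Phi_0\in\Hom_\bbS(\hoPoiss_{n-1},\Graphs_n^2)$, and since $D$ commutes with the differential and acts by derivations, $D\circ\Phi_0$ is a cocycle in $\Def(\Phi_0)$; this is the asserted map $\K S\ltimes H(\GC_n^2)\to H(\Def(\Phi_0))[-1]$. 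As $\Phi_0$ is determined by $\Phi_0(\mu_{2k+1})=\tfrac{1}{4^k}(\text{$(2k{+}1)$-star})$ on the $\hoLie_n$-generators (everything else being sent to zero, with $m_2$ acting by derivations), it suffices to evaluate $D\circ\Phi_0$ on the $\mu_{2k+1}$.

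First I would treat the scaling class. The derivation $S$ multiplies a graph by $\#(\text{internal vertices})-\#(\text{edges})$, so on the $(2k{+}1)$-star (one internal vertex, $2k+1$ edges) it acts as $1-(2k+1)=-2k$, giving
\[
 (S\circ\Phi_0)(\mu_{2k+1})=-\frac{2k}{4^k}\,(\text{$(2k{+}1)$-star}),
\]
i.e.\ minus the tripod cocycle~\eqref{equ:Tgen}; hence $S\mapsto -T$. For $\gamma\in\GC_n^2$ I would insert the action formula $\gamma\cdot\Gamma=\pm\Gamma\bullet\gamma\pm\gamma_1\circ_1\Gamma+\sum_j\pm\Gamma\circ_j\gamma_1$ with $\Gamma$ the star. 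Its leading term $\Gamma\bullet\gamma$ inserts $\gamma$ at the unique internal vertex of the star and redistributes the $2k+1$ hairs over the vertices of $\gamma$ in all ways; summing over $k$ with weights $1/4^k$ this reproduces exactly the cocycle~\eqref{equ:gamma_img}.

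The main work, and the step I expect to be the principal obstacle, is to show that the two remaining families of terms $\gamma_1\circ_1\Gamma$ and $\Gamma\circ_j\gamma_1$ do not alter the cohomology class. These terms attach $\gamma$ along a former hair and therefore leave the hairy subcomplex $\fHGC_{n-1,n}^2$; they are needed only to make $D\circ\Phi_0$ closed in the full deformation complex. I would filter $\Def(\Phi_0)$ so that the purely hairy part is leading (for instance by a ``defect from being hairy'', or by the number of internal vertices not carrying a hair) and apply Lemma~\ref{lem:SpecSeqConv}: on the associated graded the class of $D\circ\Phi_0$ is detected by its hairy part, which is~\eqref{equ:gamma_img} respectively~\eqref{equ:Tgen}, and since the inclusion $\fHGC_{n-1,n}^2\hookrightarrow\Def(\Phi_0)$ is a quasi-isomorphism this pins the class down. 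Because the leading cores of the cocycles attached to distinct $\gamma$ (and to $S$) are linearly independent, the map is injective; as the images represent exactly the classes of the generating cocycles of Remark~\ref{r:oddpod_class}, which by Theorem~\ref{thm:Phi0} form a basis of the primitive part of~\eqref{equ:DefPhi0}, the image is that primitive part.

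Finally, for the loop classes I would specialize $\gamma=L_r$ in~\eqref{equ:gamma_img}, attaching an odd number of hairs to the $r$-cycle in all ways. Passing to the reduced description from the proof of Theorem~\ref{thm:Phi0} (the $e_\pm$-decorations and the subsequent skeleton reduction along the cycle edges), the surviving representative is a hedgehog on the $r$-cycle with each vertex carrying an odd number $k_i\geq 1$ of hairs, of the claimed form $H_r$. Comparing with~\cite{Turchin3} this is precisely the $1$-loop hedgehog class of the untwisted $H(\HGC_{n-1,n})$. I expect the only delicate points here to be the bookkeeping of the Koszul signs and the dihedral-symmetry survival condition that fixes the constants and the nonvanishing of $H_r$.
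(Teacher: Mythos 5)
Your overall architecture (compute $D\circ\Phi_0$ on the generators, match against Remark~\ref{r:oddpod_class}, and invoke Theorem~\ref{thm:Phi0}) agrees with the paper, and your computations for the scaling class $S$ and for the leading term $m\bullet\gamma$ are correct. However, the step you yourself flag as the principal obstacle --- disposing of the non-hairy terms $\gamma_1\circ_1 m$ and $m\circ_j\gamma_1$ --- is a genuine gap, and the mechanism you propose cannot work. The hairy graphs form a \emph{sub}complex of $\Def(\Phi_0)$, not a quotient: the twisted differential of a non-hairy element can have hairy components (indeed $d\gamma_1=\delta\gamma_1+[m,\gamma_1]$, and $\delta\gamma_1$ is hairy, consisting of $\gamma$ with one hair attached in all ways). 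Hence the span of non-hairy elements is not closed under the differential, so no filtration by a ``defect from being hairy'' is compatible with the differential, and ``the class of $D\circ\Phi_0$ is detected by its hairy part'' is not meaningful --- projection onto the hairy part is not a chain map. Worse, even granting such a spectral sequence, the hairy part of $D\circ\Phi_0$ is \emph{not} \eqref{equ:gamma_img}: it is missing the one-hair ($k=0$) term $\delta\gamma_1$ entirely (that term only appears after a coboundary correction), and it contains extra hairy terms coming from $\gamma_1\circ_1 m$. So your proposed leading-term identification is wrong as stated.

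The paper's resolution is an explicit one-line coboundary correction, not a filtration: the graph $\gamma_1\in\Graphs_n^2(1)$ is itself an (arity-one) element of the deformation complex, and its twisted differential is exactly $\delta\gamma_1+[m,\gamma_1]$. Since the image of $\gamma$ is $\pm m\bullet\gamma\pm[m,\gamma_1]$, adding $\pm d\gamma_1$ cancels the non-hairy terms $[m,\gamma_1]$ on the nose and replaces them by $\delta\gamma_1$, which by closedness of $\gamma$ is precisely the missing one-hair term; the result is the hairy cocycle \eqref{equ:gamma_img}, and no spectral sequence is needed at this stage. For the loop classes the paper also argues differently from your ``$e_\pm$-decoration and skeleton reduction'': it does not attempt to reduce \eqref{equ:gamma_img} with $\gamma=L_r$ combinatorially to the hedgehog form. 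Instead it observes that the one-loop part of $H(\HGC_{n-1,n}^\alpha)$ is one-dimensional in each degree (from the proof of Theorem~\ref{thm:Phi0}), and that in the filtration by the number of hairs the associated graded is the untwisted $\HGC_{n-1,n}$, whose one-loop homology is spanned by hedgehogs; therefore any cocycle whose lowest-hair term is a hedgehog represents the unique class in that degree, which must be the image of $L_r$. You should replace your middle step by the $\gamma_1$-coboundary trick and your loop-class step by this uniqueness argument.
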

\begin{proof}
First consider the rescaling class $S$. By straightforward computation its action on the Maurer-Cartan element yields the cocycle \eqref{equ:Tgen} starting with the tripod.

Next, the image of a graph homology class represented by the cocycle $\gamma\in \GC_n$ is represented by 
\[
\pm m\bullet \gamma \pm [m, \gamma_1]
\]
where $m$ is the Maurer-Cartan element corresponding to $\Phi_0$. Adding the coboundary of $\gamma_1$, i.e., $\delta\gamma_1+[m,\gamma_1]$, where $\delta$ is the differential on $\Graphs$  we obtain
\[
\pm \delta \gamma_1 \pm m\bullet \gamma.
\]
Note that $\delta \gamma_1$ is an element obtained by attaching one edge with a univalent external vertex to $\gamma_1$ in all possible ways by closeness of $\gamma$.
It follows that for any graph homology class the formula \eqref{equ:gamma_img} describes its image in the homology of the deformation complex.
It remains to check that for $\gamma$ the loop class $L_r$ the image may indeed be represented by a cocycle with the hedgehog leading term as shown above.
From the proof of Theorem \ref{thm:Phi0} one sees that the images of the $L_r$ span the one-loop part of the homology of $\HGC_{n-1, n}^\alpha$, which is one-dimensional in each degree. Hence it suffices to check that the above cocycles do so as well. Take a filtration on $\HGC_{n-1, n}^\alpha$ by the number of hairs. Then the associated graded is isomorphic to the untwisted complex $\HGC_{n-1, n}$. The one-loop part of its homology is already one-dimensional in each degree, spanned by the hedgehog classes, and hence the one-loop part of the homology of $\HGC_{n-1, n}^\alpha$ is spanned by (any) set of classes whose lowest-number-of-hairs terms are the hedgehog diagrams.

\end{proof}

% From this action we obtain a map 
% \[
%  \K \oplus H(\GC_n^2) \to \Def(\Phi_0).
% \]
% The image of $\K$ is a class starting with the tripod graph.
% 
% The image of some cocycle $\gamma\in \GC_n^2$ is (up to addition of a coboundary) a hairy graph cocycle as follows:
% \begin{equation}\label{equ:gamma_img}
% \gamma\mapsto \sum_{k\geq 1} (const) 
% \sum 
% \underbrace{
% \begin{tikzpicture}[baseline=-.65ex]
% \node (v) at (0,0) {$\gamma$};
% \draw (v) edge +(-.5,-.5)  edge +(-.3,-.5) edge +(0,-.5) edge +(.3, -.5) edge +(.5,-.5);
% \end{tikzpicture}
% }_{2k+1}.
% \end{equation}
% Here the sum is over all ways of connecting the legs to vertices of $\gamma$.
% Note that the loop classes $L_r$ are in particular mapped to graph cocycles with bivalent vertices.
% This can be fixed by adding further coboundaries, and once can check that the they are mapped to the hedgehog classes $H_r$ which are represented by cocycles
% \[
% H_r
% \propto
% \sum_{k_1,\dots, k_r\geq 1 \text{ odd}}
% (const)
% \begin{tikzpicture}[baseline=-.65ex]
% \node[int] (v1) at (0:1) {};
% \node[int] (v2) at (72:1) {};
% \node[int] (v3) at (144:1) {};
% \node[int] (v4) at (226:1) {};
% \node (v5) at (-72:1) {$\cdots$};
% \draw (v1) edge (v2) edge (v5) (v3) edge (v2) edge (v4) (v4) edge (v5);
% \draw (v1) edge +(-30:.3) edge +(0:.4) edge +(30:.3) +(0:.5) node  {$k_1$}; 
% \draw (v2) edge +(42:.3) edge +(72:.4) edge +(102:.3) +(72:.5) node {$k_2$}; 
% \draw (v3) edge +(114:.3) edge +(144:.4) edge +(184:.3) +(144:.5) node {$k_3$}; 
% \draw (v4) edge +(196:.3) edge +(226:.4) edge +(256:.3) +(-144:.5) node {$k_4$}; 
% \end{tikzpicture}
% \]
% 
% 

\subsection{The map $\Phi$}
We consider next the map $\Phi:\hoe_{n-1}\to \Graphs_n^2$ defined as the composition 
\[
\Phi: \hoe_{n-1}\to C(\FM_{n-1}) \to C(\FM_{n}) \to \Graphs_n\to \Graphs_n^2
\]
where the left hand arrow is some homotopy lift of the zigzag (with $m=n-1$) 
\[
\hoe_{m}\to \e_{m} \to \Graphs_{m}  \leftarrow C(\FM_{m})
\]
and the next to the last arrow is the Kontsevich integration map.

\begin{lemma}
The map $\hoe_{m}\to C(\FM_{m})$ sends the $L_\infty$ generator $\mu_r$ to the fundamental chain $\Fund_r\in C(\FM_{m}(r))$.
\end{lemma}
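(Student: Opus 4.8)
The plan is to pin down the image of each $\mu_r$ by a degree count combined with the compatibility of the lift with the differentials and the operad structures, using the Fulton--MacPherson description of the boundary of $\FM_m(r)$. Write $\phi\colon \hoe_m\to C(\FM_m)$ for the lift in question. First I would record the degree bookkeeping: the generator $\mu_r$ of $\hoLie_m\subset\hoe_m$ has homological degree $m(r-1)-1$ (one checks this is consistent, since $d\mu_r=\sum\pm\,\mu_{r_1}\circ_i\mu_{r_2}$ with $r_1+r_2=r+1$ drops degree by one), and this is exactly $\dim\FM_m(r)$ computed in the proof of Lemma~\ref{lem:grvanishing}. Since $\phi$ is a degree-zero map of dg operads, $\phi(\mu_r)$ is a top-dimensional semi-algebraic chain on $\FM_m(r)$. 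As $\FM_m(r)$ is a connected oriented manifold (with corners) of that dimension, its space of top chains is one-dimensional, spanned by $\Fund_r$, so $\phi(\mu_r)=\lambda_r\,\Fund_r$ for some $\lambda_r\in\R$. Moreover, as there are no chains in degree $\dim\FM_m(r)+1$, the element $\phi(\mu_r)$ is unchanged under passage to a homotopic lift, so the scalars $\lambda_r$ are well defined.

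It then remains to show $\lambda_r=1$ for all $r$, which I would do by induction on $r$. For the base case $r=2$ one has $d\mu_2=0$ and $\FM_m(2)\cong S^{m-1}$ is closed, so the differential gives no information; instead I would fix the normalization through the Kontsevich integration quasi-isomorphism $I\colon C(\FM_m)\to\Graphs_m$ of \eqref{equ:Kmap}. Since $\phi$ lifts the zigzag, $I\circ\phi$ agrees on homology classes with $\hoe_m\to\e_m\to\Graphs_m$, which sends the class of $\mu_2$ to the single-edge graph. On the other hand $\Fund_2$ represents the generator of $H_{m-1}(S^{m-1})$ and $I(\Fund_2)$ is the single-edge graph times $\int_{S^{m-1}}\Omega_{S^{m-1}}$, equal to $1$ with the normalization of the volume form built into the construction. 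Comparing gives $\lambda_2=1$.

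For the inductive step $r\geq 3$ I would use that $\phi$ is a chain map of operads. On one side, $\phi(d\mu_r)=\partial\phi(\mu_r)=\lambda_r\,\partial\Fund_r$. On the other side, the cobar differential $d\mu_r=\sum\pm\,\mu_{r_1}\circ_i\mu_{r_2}$ and operadicity give $\phi(d\mu_r)=\sum\pm\,\lambda_{r_1}\lambda_{r_2}\,\Fund_{r_1}\circ_i\Fund_{r_2}=\sum\pm\,\Fund_{r_1}\circ_i\Fund_{r_2}$ by the induction hypothesis. The Fulton--MacPherson boundary formula identifies this sum with $\partial\Fund_r$, the codimension-one strata of $\FM_m(r)$ being exactly the images of the operadic compositions $\FM_m(r_1)\times\FM_m(r_2)\to\FM_m(r)$ coming from collisions of points. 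Hence $\lambda_r\,\partial\Fund_r=\partial\Fund_r$, and since $\partial\Fund_r\neq 0$ for $r\geq 3$ we conclude $\lambda_r=1$.

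The main obstacle I expect is the bookkeeping of signs and orientations: one must verify that the signs in the cobar differential of $\hoLie_m=\Omega(\Com^*\{m\})$ match the orientation signs of the boundary strata of $\FM_m(r)$, so that the identity $\sum\pm\,\Fund_{r_1}\circ_i\Fund_{r_2}=\partial\Fund_r$ holds on the nose and the cancellation $\lambda_r\,\partial\Fund_r=\partial\Fund_r$ is legitimate; this forces a coherent choice of the operadic suspension conventions and of the orientations on the $\FM_m(r)$. A secondary point requiring care is the claim that the top-dimensional semi-algebraic chains on the connected manifold $\FM_m(r)$ form a one-dimensional space spanned by $\Fund_r$, which rests on the structure theory of semi-algebraic chains of \cite{HLTV} and is precisely what reduces the whole statement to the single scalar $\lambda_r$.
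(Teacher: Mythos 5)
Your overall strategy --- induction on $r$, base case pinned down by the behavior on homology, inductive step comparing $\partial\phi(\mu_r)$ with $\partial\Fund_r$ via the Fulton--MacPherson boundary strata --- is the same as the paper's, and your degree bookkeeping is correct. However, there is a genuine gap at the step on which everything else rests: the claim that the top-dimensional semi-algebraic chains on $\FM_m(r)$ form a one-dimensional space spanned by $\Fund_r$ is false. In the theory of \cite{HLTV}, a chain of degree $d=\dim\FM_m(r)$ is, roughly, a full-dimensional semi-algebraic subset with orientation and multiplicities; for instance, the closure of the locus where the first coordinate of point $1$ is smaller than that of point $2$ gives a top-degree chain that is not proportional to $\Fund_r$. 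The space of top-degree chains is therefore infinite-dimensional, so your reduction $\phi(\mu_r)=\lambda_r\Fund_r$ --- which is where the real content of the lemma lies --- is unjustified. What \emph{is} small is the space of top-degree \emph{cycles}: it vanishes for $r\geq 3$. Note that your remark on homotopy-independence of the lift correctly uses the true statement that there are no nonzero chains \emph{above} the top dimension; it is that fact, not one-dimensionality of top chains, which must carry the argument.

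The repair is exactly the paper's argument, and it slots into your inductive step. From $\partial\phi(\mu_r)=\sum\pm\,\Fund_{r_1}\circ_i\Fund_{r_2}=\partial\Fund_r$ (which you derive correctly from operadicity and the induction hypothesis, modulo the sign/orientation matching that the paper also uses implicitly), conclude that $\phi(\mu_r)-\Fund_r$ is a cycle of degree $m(r-1)-1=\dim\FM_m(r)$. Since the top nonvanishing homology of $\FM_m(r)$ sits in degree $(r-1)(m-1)$, which is strictly below $m(r-1)-1$ for $r\geq 3$, this cycle is a boundary, $\phi(\mu_r)-\Fund_r=\partial x$ with $x$ of degree $m(r-1)>\dim\FM_m(r)$; such a semi-algebraic chain is degenerate, hence zero by definition, so $\phi(\mu_r)=\Fund_r$ on the nose. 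With this replacement the scalar $\lambda_r$ never needs to be introduced, and your base case also simplifies: the paper only uses that the lift is the identity on homology, so that $\phi(\mu_2)$ is a top cycle on the closed manifold $\FM_m(2)\cong S^{m-1}$ representing the fundamental class, which forces $\phi(\mu_2)=\Fund_2$ without invoking the Kontsevich integral.
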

\begin{proof}
The statement is shown by induction on $r$. Since the map $\hoe_{m}\to C(\FM_{m})$ is required to be the identity on homology, the statement holds for $r=2$. Suppose it holds up to some $r$, and suppose that $\mu_{r+1}$ is sent to the chain $c$. Then $c$ has to satisfy
\[
 \partial c = \partial \Fund_{r+1}
\]
or in other words $\Fund_{r+1}-c$ is a cycle. But since there is no homology in that degree in $\FM_{n}$ it must be a boundary, i. e., $\Fund_{r+1}-c= \partial x$ for some chain $x\in C(\FM_m(r+1))$ of degree $mr$. But $\mathit{dim}(\FM_m(r+1))=mr-1$, hence $x$ must be degenerate, and hence $x=0$ since there are no such semi-algebraic chains by definition.
\end{proof}

\begin{rem}\label{r:lift_m=1}
For the case $m=1$ we can choose this lift $\hoe_1\to C(\FM_1)$ explicitly by sending the generator $m_r$ of $\hoe_1=A_\infty$ to the fundamental class of the connected component of $\FM_1(r)$ corresponding to the trivial permutation of $1\ldots r$.
\end{rem}

We will first show that the map $\Phi$ is a deformation of $\Phi_0$, in the sense that $\Phi_0$ is the associated graded map of $\Phi$ when considering a suitable filtration on $\Graphs_{n}$.
\begin{lemma}\label{lem:fundimage}
The image of the fundamental chain $\Fund_3\in C(\FM_{n-1}(3))$ under the composition 
\[
C(\FM_{n-1})\to C(\FM_{n}) \to \Graphs_{n}
\]
contains the tripod graph 
\[
T_3 =
\begin{tikzpicture}[baseline=-.65ex]
\node[ext] (v1) at (-.5,0) {$1$};
\node[ext] (v2) at (0,0) {$2$};
\node[ext] (v3) at (.5,0) {$3$};
\node[int] (v) at (0,.5) {};
\draw (v) edge (v1) edge (v2) edge (v3); 
\end{tikzpicture}
\]
with coefficient $\frac 1 4$, i.e., 
\[
\Fund_3 \mapsto \frac 1 4 T_3 + (\text{graphs with more vertices}).
\]
\end{lemma}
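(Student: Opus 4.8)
The plan is to compute the relevant configuration space integral and to recognize it as the measure of an explicit region in a product of spheres. Recall that under the Kontsevich map $C(\FM_n)\to\Graphs_n$ the coefficient of a graph $\Gamma$ with $k$ internal vertices in the image of a chain $c$ is the fiber integral $\int_c\int_{\FM_n(N+k)/\FM_n(N)}\prod_{(ij)\in E\Gamma}\pi_{ij}^*\Omega_{S^{n-1}}$ of \eqref{equ:Kmap}; here $\Omega_{S^{n-1}}$ is taken normalized to unit total mass (as is needed for the weights to be rational). Since the generator $\mu_3$ is sent to $\Fund_3$ by the previous lemma, I must analyze the image of the pushforward of $\Fund_3$ to $\FM_n(3)$. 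First I would isolate the tripod by a degree count as in Lemma \ref{lem:grvanishing}: a graph $\Gamma\in\Graphs_n(3)$ with $k$ internal vertices and $e$ edges contributes a form of degree $(n-1)e-nk$, which must match $\dim\Fund_3=2n-3$. Because internal vertices of graphs in $\Graphs_n$ are at least trivalent and multiple edges vanish, the unique graph realizing the minimal number of vertices is the tripod $T_3$ (with $k=1$, $e=3$), every other solution having strictly more internal vertices. Thus $\Fund_3\mapsto w\,T_3+(\text{graphs with more vertices})$, and it remains to show $w=\tfrac14$.

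Writing $m=n-1$ and using the perpendicular translation to fix the hyperplane as $\R^{m}=\{x_{m+1}=0\}\subset\R^{m+1}$, the weight $w$ is the integral over the total space $D$ of configurations of external points $p_1,p_2,p_3\in\R^{m}$ together with one internal point $q\in\R^{m+1}$, modulo translation along $\R^m$ and global scaling, of the $3m$-form $\bigwedge_{i=1}^3 g_i^*\Omega_{S^{m}}$, where $g_i(q)=\tfrac{q-p_i}{|q-p_i|}$. Setting $G=(g_1,g_2,g_3)\colon D\to(S^{m})^3$, the integrand is exactly $G^*\bigl(\Omega_{S^m}\times\Omega_{S^m}\times\Omega_{S^m}\bigr)$, so $w=\int_D G^*\bigl(\Omega_{S^m}^{\times 3}\bigr)$. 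As the target product form has total mass $1$, this integral equals the signed count of generic preimages integrated over $(S^m)^3$, and it suffices to determine the generic fibers of $G$.

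I would then solve $q-p_i=\lambda_i u_i$ with $\lambda_i>0$ for a generic target $(u_1,u_2,u_3)\in(S^m)^3$. Comparing $(m+1)$-st coordinates gives $q_{m+1}=\lambda_i\,(u_i)_{m+1}$, hence $\lambda_i=q_{m+1}/(u_i)_{m+1}$; positivity of all $\lambda_i$ holds if and only if all $(u_i)_{m+1}$ share the sign of $q_{m+1}$, i.e. if and only if $u_1,u_2,u_3$ lie in a common open hemisphere. In that case $p_i=q-\lambda_i u_i$ lands automatically in $\R^m$, and translation along $\R^m$ together with scaling makes the solution unique; for targets not lying in a common hemisphere there is no solution. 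Thus $G$ restricts to a bijection from a full-measure part of $D$ onto the open set $U\subset(S^m)^3$ of triples contained in a common hemisphere, giving $w=\pm\,\mu(U)=\pm\bigl((\tfrac12)^3+(\tfrac12)^3\bigr)=\pm\tfrac14$, the two terms being the all-upper and all-lower contributions. A reflection across the hyperplane (which negates the integrand and reverses the orientation of $D$) exchanges these two contributions with net sign $+1$, so they add rather than cancel.

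The main obstacle is the orientation bookkeeping: one must verify that $G$ is orientation-preserving on the relevant component, so that the sign is $+\tfrac14$ rather than $-\tfrac14$. This requires fixing conventions for the orientations of $\FM_n$, of the forgetful fiber, and of $\Omega_{S^m}$, and tracking them through the identification $w=\int_D G^*(\Omega_{S^m}^{\times3})$. A secondary point to nail down is that $T_3$ really is the unique minimal graph: multiple-edge and bivalent-internal-vertex graphs are excluded because the image lies in $\Graphs_n$, while for small $n$ any degenerate edge-only graph (with fewer vertices) is killed by the $\bbS_3$-antisymmetry of $\mu_3$, which I would record separately.
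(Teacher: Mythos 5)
Your proposal is correct and takes essentially the same route as the paper: the paper likewise reduces the coefficient to the measure of the set of direction-triples pointing into a common half-space relative to the fixed hyperplane (note your phrase ``common open hemisphere'' should mean all-up or all-down with respect to that hyperplane, as your own computation $(\tfrac12)^3+(\tfrac12)^3$ makes clear), obtaining $2\cdot\tfrac18=\tfrac14$ via the explicit parameterization $X_j=(x_j,-1)$, $X_4=0$ in place of your Gauss-map degree count. Your reflection argument for why the two contributions add rather than cancel is precisely the paper's orientation-reversing involution $I$ (one sign from orientation reversal, three from the edge forms), and the paper's handling of the remaining orientation bookkeeping is no more detailed than yours.
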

\begin{proof}
Consider the forgetful map $\pi : \FM_{n}(4)\to \FM_{n}(3)$, and consider $\Fund_3$ as an element of $C(\FM_{n}(3))$ via the inclusion $C(\FM_{n-1})\to C(\FM_{n})$. We have to compute the integral
\[
\int_{\pi^* \Fund_3} \tilde \omega_{T_3}.
\]
Note that $\FM_{n}$ carries an orientation reversing involution $I$ by mirroring along the $x_1,...,x_{n-1}$-plane. This involution leaves $\Fund_3$ invariant. 
We will decompose $\pi^* \Fund_3=:c+I (c)$ where the chain $c$ is supported in the subspace of $\FM_{n}(4)$ for which the 4-th point lies above the other three.\footnote{More precisely, one has projections $\pi_{i}:\FM_{n}(4)\to \FM_{n}(2)\cong S^{n-1}$ by forgetting all but the $i$-th and the fourth point of a configuration. Then we require that $c$ is supported in $\pi_{1}^{-1}(U)\cap \pi_{2}^{-1}(U)\cap \pi_{3}^{-1}(U)$, where $U\subset S^{n-1}$ is the closed upper hemisphere.}
One then finds 
\[
\int_{\pi^* \Fund_3} \tilde \omega_{T_3} = 2\int_{c} \tilde \omega_{T_3}
\]
since 
\[
\int_{I(c)}  \tilde \omega_{T_3} =
- \int_{c}  I^*(\tilde \omega_{T_3})
= \int_{c}  \tilde \omega_{T_3}.
\]
Here the first minus sign arises since $I$ is orientation reversing and the second since $I$ acts by multiplication with $-1$ on each of the three one-forms associated to the edges.
It remains to compute the integral $\int_{c} \tilde \omega_{T_3}$.
This can be done by parameterizing configurations $(X_1,X_2,X_3,X_4)$ as follows
\begin{align*}
X_j&=(x_j,-1), \quad j=1,2,3 & X_4=(0,0,0,0)
\end{align*}
with $x_1,x_2,x_3 \in \R^{n-1}$. Then integral then reduces to 
\[
\left( \int_{x\in \R^{n-1}} \Omega(x) \right)^3 = \left(\frac 1 2 \right)^3= \frac 1 8
\]
where $\Omega$ is the top form obtained by pulling back the volume form on the sphere $S^{n-1}$.
\end{proof}
\begin{rem}
By a similar argument one may also show that the the coefficient of the $r$-pod diagram in the image of the fundamental chain of $\FM_{n-1}(r)$ has coefficient $\frac 1 {2^{r-1}}$ for $r$ odd and 0 for $r$ even.
\end{rem}

% Note that 
% \[
%  \Def(\Phi) \cong \prod_N \Hom_{\bbS_N}(\e_{n-1}^\vee(N), \Graphs_{n}(N))
%  \cong \prod_N \e_{n-1}\{1-n\}(N)\otimes_{\bbS_N} \Graphs_{n}(N).
% \]
% Let us define a filtration on $\Def(\Phi)$ by the ``total excess''. The total excess for an element $x\otimes\Gamma$, with $x\in \e_{n-1}\{1-n\}(N)$ homogeneous of degree $d$ and $\Gamma\in \Graphs_{n}(N)$ with $k$ internal vertices and $l$ edges is 
% \begin{equation}\label{equ:totalgenus}
%  g := l+\frac{N-n-d-1}{n-2} - k-1.
% \end{equation}
% Note that one can interpret elements of $\e_{n-1}(N)$ as graphs with $N$ vertices. 
% Suppose that the graph corresponding to $x$ has $p$ edges.
% Then the number above is just 
% \[
%  e+l-(N+k) +1
% \]
% i.e., one minus the Euler characteristic of the combined graph obtained by gluing the graph corresponding to $x$ to $\Gamma$ at the $N$ (external) vertices.

\begin{lemma}\label{lem:excess_filtr}
 The differential on $\Def(\hoe_{n-1}\stackrel{\Phi}{\longrightarrow}\Graphs_n^2)$ is compatible with the total excess filtration of section~\ref{sec:totalgenus}.
 The associated graded complex can be identified with $\Def(\hoPoiss_{n-1}\stackrel{\Phi_0}{\longrightarrow}\Graphs_n^2)$.
\end{lemma}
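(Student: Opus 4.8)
The plan is to realise $\Phi$ as a filtered perturbation of $\Phi_0$ and then to leverage the excess-compatibility of the convolution bracket from Lemmas~\ref{lem:tgen_compat1} and~\ref{lem:tgen_compat2}. First I would observe that $\Def(\hoe_{n-1}\stackrel{\Phi}{\to}\Graphs_n^2)$ and $\Def(\hoPoiss_{n-1}\stackrel{\Phi_0}{\to}\Graphs_n^2)$ have the \emph{same} underlying complete graded vector space: for $n\ge 3$ this is the identification $\e_{n-1}\cong\Poiss_{n-1}$, while for $n=2$ one invokes the Poincar\'e--Birkhoff--Witt identification of $\bbS$-modules together with the fact, recalled in Section~\ref{sec:totalgenus}, that the associated graded of the Hodge filtration on $\hoe_1$ is $\hoPoiss_1$. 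On this common space the total excess of Section~\ref{sec:totalgenus} defines the descending filtration $\mF^{\bullet}$. Writing the deformation differential as $d=\delta_{\Graphs}+[\alpha_\Phi,-]$, where $\delta_{\Graphs}$ is induced by the (excess-preserving, since vertex splitting changes the number of internal vertices and of edges by the same amount) differential of $\Graphs_n^2$ and $\alpha_\Phi$ is the Maurer--Cartan element encoding $\Phi$, the whole statement reduces to two facts about $\alpha_\Phi$: that it lies in $\mF^{0}$, and that its leading part $\gr^{0}\alpha_\Phi$ coincides with $\alpha_{\Phi_0}$.

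The technical core is a degree count for the graphs in the image of $\Phi$, in the spirit of Lemma~\ref{lem:grvanishing}. Each generator of $\hoe_{n-1}$ is pushed through $\hoe_{n-1}\to C(\FM_{n-1})\to C(\FM_n)\to\Graphs_n$ to a configuration-space integral over a fixed chain, and a graph $\Gamma$ with $k$ internal vertices and $E$ edges contributes only when the degree of its associated form matches the dimension of that chain. For the $\hoLie_{n-1}$ generator $\mu_N\mapsto\Fund_N$ this matching reads $(n-1)E-nk=(n-1)(N-1)-1$; since the source factor is the same for all contributing graphs, the total excess of such a $\Gamma$ exceeds that of the star (the unique $k=1$ graph) by exactly $\tfrac{k-1}{n-1}\ge 0$, with equality precisely when $k=1$. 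Thus the leading term is the star, whose coefficient is fixed by Lemma~\ref{lem:fundimage} and the remark following it to be $\tfrac1{4^{k}}$ for $N=2k+1$ and $0$ for $N$ even, exactly reproducing $\Phi_0$ on the $\hoLie$ generators; the binary product generator trivially contributes the edgeless two-vertex graph in the bottom filtration degree. Running the same count on the remaining (positive dual Hodge) generators, which $\Phi_0$ annihilates, shows that their images sit in strictly higher excess and hence disappear in $\gr^{0}$. Together these identify $\gr^{0}\alpha_\Phi$ with $\alpha_{\Phi_0}$ and confirm $\alpha_\Phi\in\mF^{0}$.

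Granting this, the conclusion is formal. Because $\alpha_\Phi\in\mF^{0}$ and, by Lemma~\ref{lem:tgen_compat1} (respectively Lemma~\ref{lem:tgen_compat2} when $n=2$), the bracket of an element of excess $\ge g_1$ with one of excess $\ge g_2$ has excess $\ge g_1+g_2$, the operator $[\alpha_\Phi,-]$ preserves $\mF^{\bullet}$; as $\delta_{\Graphs}$ does as well, $d$ is filtered, which is the first assertion. Passing to the associated graded kills every excess-raising contribution, leaving $\delta_{\Graphs}+[\gr^{0}\alpha_\Phi,-]=\delta_{\Graphs}+[\alpha_{\Phi_0},-]$, which is precisely the differential of $\Def(\hoPoiss_{n-1}\stackrel{\Phi_0}{\to}\Graphs_n^2)$; this is the second assertion. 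The step I expect to be the genuine obstacle is the degree count itself for the non-$\hoLie$ generators, together with checking that $\gr^{0}\alpha_\Phi$ is insensitive to the chosen homotopy lift $\hoe_{n-1}\to C(\FM_{n-1})$: two lifts differ by terms that the same count forces into strictly positive excess, so the leading term is well defined, but this independence must be verified.
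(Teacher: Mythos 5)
Your overall route coincides with the paper's: both deformation complexes are twists of the (common, via $\e_{n-1}\cong\Poiss_{n-1}$ for $n\geq 3$, resp.\ the PBW identification for $n=2$) convolution algebra by Maurer--Cartan elements, so the lemma reduces to showing that the element $\alpha_\Phi$ encoding $\Phi$ has total excess $\geq 0$ and that its excess-zero part is $\alpha_{\Phi_0}$; and you attack this by the same dimension count on the configuration-space integrals, with Lemma~\ref{lem:fundimage} and the remark following it supplying the excess-zero coefficients. Where it applies, your count is correct and is equivalent to the paper's identity $(n-1)g=k+p-1$, where $p=N-h-1$ is the Hodge degree (number of brackets) of the source factor and $k$ the number of internal vertices.

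There is, however, a genuine gap: the count alone does not exclude contributions of graphs with \emph{no} internal vertices whose edges join external vertices, and for exactly these terms your inequality $\tfrac{k-1}{n-1}\geq 0$ (which presupposes $k\geq 1$) and your claim that the generators annihilated by $\Phi_0$ land in \emph{strictly} positive excess both fail. Concretely, a generator of Hodge degree $p=1$ and arity $N\geq 3$ (dual to $[x_1,x_2]\wedge x_3\wedge\cdots\wedge x_N$, killed by $\Phi_0$) paired with the graph having $k=0$ internal vertices and $E=N-2$ edges between external vertices satisfies the degree-matching constraint and sits at total excess exactly $0$, so by your argument it could corrupt $\gr^0\alpha_\Phi$ for every $n$; and for $n=2$ a $p=0$ generator paired with such a graph sits at excess $-1$, so even the assertion $\alpha_\Phi\in\mF^0$ does not follow from the count (for $n\geq 3$ integrality of the excess rescues that half). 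These terms do vanish, but for a geometric rather than a dimensional reason, and this is precisely the first observation in the paper's proof: under the Feynman rules \eqref{equ:Kmap}, an edge between two external vertices contributes the propagator $\pi_{ij}^*\Omega_{S^{n-1}}$ restricted to a chain lying in $\FM_{n-1}\subset\FM_n$, where it vanishes identically, the relevant directions being confined to the equatorial $S^{n-2}$ on which an $(n-1)$-form is zero. Once this vanishing is inserted, the only remaining low-excess terms are those with $k\geq 1$ (your count) or with edgeless $\Gamma$ (which either vanish by degree reasons or reproduce the base map factoring through $\Com$), and your argument closes as in the paper. The lift-independence you flag at the end is not an additional difficulty: the paper's (unnumbered) lemma on $\hoe_m\to C(\FM_m)$ shows that \emph{any} homotopy lift sends $\mu_r$ to $\Fund_r$ on the nose, since the difference would have to be a degenerate, hence zero, semi-algebraic chain.
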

\begin{proof}
The deformation complex $\Def(\Phi)$ is obtained from $\Def(\hoe_{n-1}\stackrel{*}{\to}\Graphs_n^2)$ by twisting with some Maurer-Cartan element, say $m$, while $\Def(\Phi_0)$ is obtained by twisting with a Maurer-Cartan element $m_0$.
Hence we have to check that $m$ consists of terms of total excess $\geq 0$, and that $m$ and $m_0$ agree modulo terms of total excess~$\geq 1$.

To this end consider an element $x\otimes \Gamma\in \e_{n-1}\{1-n\}(N)\otimes \Graphs_n^2(N)$, with $x$ of dual Hodge degree $h$ (filtration $h$ in case $n=2$) and $\Gamma$ a graph with $k$ internal vertices and $\ell$ edges.
Note that if the graph $\Gamma\in \Graphs_{n}(N)$ has edges between external vertices the term vanishes, since those edges correspond to vanishing forms under the Feynman rules \eqref{equ:Kmap}.
It follows that possible $\Gamma$ occurring in $m$ must be such that either the number of internal vertices is $k\geq 1$, or $\Gamma$ is the ``empty'' graph with no edges, i.e., $\ell=k=0$. 

Using \eqref{equ:totalgenus} the total degree of our element $x\otimes \Gamma$ is
\begin{equation}\label{equ:excessformula}
 (degree) = (n-2)p+(n-1)\ell -(n-1)(N-1) - nk = (n-1)g - k - p
\end{equation}
where we set $p:=N-h-1$, and $g$ is the total excess. Since the degree of $m$ is $-1$, there are the following cases to be considered:
\begin{itemize}
\item Note that the terms with $\Gamma$ trivial vanish by degree reasons for $N\neq 2$, and for $N=2$ they produce the map $\hoe_{n-1}\stackrel{*}{\to}\Graphs_n^2$ factoring through $\Com$.
Hence we can disregard terms with trivial $\Gamma$ in the following.
 \item If $n\geq 3$ necessarily $g\geq 0$, and if $g=0$ than either $p=0$ and $k=1$ or $p=1$ and $k=0$. In the former case ($p=0$) the element $x\in \e_{n-1}\{1-n\}(N)$ is in the commutative suboperad and the contributing terms have been evaluated in Lemma \ref{lem:fundimage} and the remark following it. In the latter case ($k=0$) the graph $\Gamma$ cannot have any edges as we argued above, and hence we can disregard it.
 \item If $n=2$ either $g=0$ or $g=-1$. If $g=0$ the same arguments as for $n\geq 3$ show that the possible terms are those of $m_0$. If $g=-1$ then necessarily $k=p=\ell=0$, i.e., $\Gamma$ is trivial and can be disregarded.
\end{itemize}

\end{proof}

\begin{rem}\label{rem:excessboundedabove}
Note that from \eqref{equ:excessformula} it follows that 
\[
g = \frac 1 {n-1}( k+p+(degree) ) \geq \frac 1 {n-1} (degree)
\]
and hence that the excess filtration is bounded above.
\end{rem}
%
%\begin{rem}\label{rem:Phi1}
%In order to better understand the proof of Lemma~\ref{lem:excess_filtr} in case $n=2$, it might help to look at another auxiliary map $\Phi_1\colon\hoe_1\to \Graphs_2$ that sends the generators of $\hoe_1=A_\infty$ as follows:
%\begin{itemize}
% \item  $m_2\in \hoe_{1}(2)$ is sent to the graph with two vertices and no edge;
% \item the $2k+1$-ary  generator $m_{2k+1}$ is sent to the graph
% \[
%  \frac 1 {4^{k}(2k+1)!}\
%  \begin{tikzpicture}[baseline=-.65ex]
%   \node[int] (v) at (0,1) {};
%   \node[ext] (v1) at (-1,0) {$\scriptstyle 1$};
%   \node (v2) at (0,0) {$\cdots$};
%   \node[ext] (v3) at (1,0) {$\scriptstyle 2k+1$};
%   \draw (v) edge (v1) edge (v2) edge (v3);
%  \end{tikzpicture}
% \]
%\item all the other generators $m_{2k}$, $k\geq2$, are sent to zero. 
%\end{itemize}
%The differential in $\Def(\Phi_1)$ respects both the Hodge  and the total excess filtrations. (In contrast the differential in $\Def(\Phi)$ respects the total excess filtration, but not the Hodge one.)  In both cases the associated graded complex is naturally isomorphic to $\Def(\hoPoiss_1\stackrel{\Phi_0}{\longrightarrow} \Graphs_2)$.
%
%
%\end{rem}

\begin{lemma}
 The cup product on $\Def(\Phi)$ is compatible with the total excess filtration in the sense that the cup products of two elements of total excess $\geq p$ and $\geq q$ has total excess $\geq p+q-1$. The induced cup product on the associated graded agrees with the cup product on $\Def(\Phi_0)$.
\end{lemma}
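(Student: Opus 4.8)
The plan is to reduce the statement to the excess computations already performed in Lemmas~\ref{lem:tgen_compat1} and~\ref{lem:tgen_compat2}, feeding in the estimate on the Maurer--Cartan element from Lemma~\ref{lem:excess_filtr}. The key observation is that, exactly like the Lie bracket, the cup product \eqref{equ:cupproduct} is \emph{functorial}: it is assembled only from the operadic compositions $\circ_{1,2}$ on $\e_{n-1}$ (resp.\ $\Poiss_{n-1}$) and on $\Graphs_n^2$, the operation $t_{12}\cdot$, and the Maurer--Cartan element $m$ describing $\Phi$. Hence the whole excess bookkeeping is a copy of the one in those two lemmas (which are stated for $\Graphs_n$ but apply verbatim with bivalent internal vertices allowed), the only new input being the excess of $m$ itself.

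First I would record the excess of each building block. The compositions $\circ_{1,2}$ preserve the total excess, since the dual Hodge degree is additive under composition in $\Poiss_{n-1}$ and the grading $\ell-k$ of $\Graphs_n^2$ is additive under insertion of graphs. The operation $t_{12}\cdot$ raises the dual Hodge degree by one and therefore, by \eqref{equ:totalgenus}, lowers the total excess by one; for $n=2$ this is precisely the filtration shift $\mF^p\e_1(N)\to \mF^{p+1}$ verified in Lemma~\ref{lem:tgen_compat2}. Finally, by Lemma~\ref{lem:excess_filtr}, $m$ has total excess $\geq 0$. Combining these, a homogeneous contribution to $x\cup y$ built from a term of $x$ of excess $g_1\geq p$, a term of $y$ of excess $g_2\geq q$ and a term of $m$ of excess $g_0\geq 0$ has total excess
\[
 g_0+g_1+g_2-1 \;\geq\; p+q-1 ,
\]
the $-1$ coming from the single application of $t_{12}\cdot$ to $m$. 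This proves the first assertion; boundedness above of the filtration (Remark~\ref{rem:excessboundedabove}) ensures that all the infinite sums involved converge.

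For the second assertion I would read off the induced product on the associated graded directly from \eqref{equ:cupproduct}. The displayed inequality is sharp only when $g_0=0$, $g_1=p$ and $g_2=q$; hence the component of $x\cup y$ of total excess exactly $p+q-1$ is computed by replacing $x$ and $y$ by their excess-$p$ and excess-$q$ parts and $m$ by its excess-$0$ part. By Lemma~\ref{lem:excess_filtr} the latter coincides with the Maurer--Cartan element $m_0$ of $\Phi_0$, so the induced cup product on $\gr\,\Def(\Phi)$ is given by formula \eqref{equ:cupproduct} with $m$ replaced by $m_0$, which is exactly the cup product on $\Def(\Phi_0)$ under the identification of associated gradeds from Lemma~\ref{lem:excess_filtr}. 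I expect the only delicate point to be the case $n=2$, where one has a filtration rather than a grading, so that ``the excess-$0$ part of $m$'' must be interpreted as the top surviving layer of the Hodge filtration and one must re-invoke the strict increase of that filtration under $t_{12}\cdot$ as in Lemma~\ref{lem:tgen_compat2}; this is pure bookkeeping rather than a conceptual obstacle.
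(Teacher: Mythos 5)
Your proposal is correct and takes essentially the same route as the paper: the paper's proof consists precisely of noting that the Maurer--Cartan element has total excess $\geq 0$ with leading part $m_0$ (Lemma~\ref{lem:excess_filtr}) and then declaring the argument identical to those of Lemmas~\ref{lem:tgen_compat1} and~\ref{lem:tgen_compat2}. Your write-up just makes explicit the functoriality bookkeeping (compositions preserve excess, $t_{12}\cdot$ shifts it by one, the $n=2$ filtration subtlety) that the paper leaves implicit.
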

\begin{proof}
Noting that the Maurer-Cartan element is of total excess $\geq 0$, with leading part equal to $m_0$, the proof is the same as that of Lemmas \ref{lem:tgen_compat1} and \ref{lem:tgen_compat2}.
\end{proof}

We will also need the following property of $\Phi$:

\begin{lemma}\label{lem:Phi_m2}
The map $\Phi\colon \hoe_{n-1}\to\Graphs_n^2$ always sends the product generator to the graph with two vertices and no edge.
\end{lemma}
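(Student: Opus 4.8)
The plan is to compute $\Phi(m_2)$ directly from the definition of $\Phi$ and to show that every graph carrying an edge drops out. Since $m_2$ has degree $0$ and $\Phi$ preserves degree, $\Phi(m_2)$ lies in the degree-$0$ part of $\Graphs_n(2)$; as a graph with $v$ internal vertices and $e$ edges has degree $(n-1)e-nv$, the only graphs that can occur satisfy $(n-1)e=nv$ and have all internal vertices at least trivalent. Through the chosen lift, $m_2$ is sent to a $0$-cycle $c$ in $C(\FM_{n-1}(2))$ representing the generator of $H_0$, and hence, after the equatorial inclusion $\FM_{n-1}\hookrightarrow\FM_n$, to a $0$-cycle in $C(\FM_n(2))$. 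The coefficient of a graph $\Gamma$ is then $\int_c\omega_\Gamma$, where $\omega_\Gamma=\int_{\FM_n(v+2)/\FM_n(2)}\prod_{(ij)}\pi_{ij}^*\Omega_{S^{n-1}}$. Because this form is $\mathrm{SO}(n)$-invariant on $\FM_n(2)\cong S^{n-1}$ it is a \emph{constant} $c_\Gamma$, so $\int_c\omega_\Gamma=c_\Gamma$ independently of the lift (this is what makes the statement hold ``always''); for the edgeless graph $c_\Gamma=1$.

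First I would reduce to \emph{internally connected} graphs. A $0$-cycle representing a point is group-like for the coalgebra structure on chains, and the Kontsevich map is compatible with the (co)commutative Hopf structures, so $\Phi(m_2)$ is group-like in $\Graphs_n(2)\cong S(\ICG_n(2))$. Thus $\Phi(m_2)=\exp\bigl(\sum_\gamma c_\gamma\,\gamma\bigr)$ with the sum running over internally connected graphs, and it suffices to prove $c_\gamma=0$ for every internally connected $\gamma$ with an edge.

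The heart of the argument is a valence count. For an internally connected graph the $v$ internal vertices are connected among themselves, and trivalence gives $2e\geq 3v$; together with $(n-1)e=nv$ this forces $2nv/(n-1)\geq 3v$, i.e.\ $n\leq 3$. For $n\geq 4$ no such $\gamma$ exists and we are done. For $n=3$ one must have equality, so all internal vertices are exactly trivalent and both external vertices have valence $0$; the internal part is then a connected component consisting only of internal vertices, which is excluded from $\Graphs_3$. Hence for all $n\geq 3$ the only surviving term is the edgeless graph.

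The remaining case $n=2$, where the valence inequality is no longer restrictive, is the main obstacle. Here I would use the orientation-reversing reflection $I$ of $\R^2$ in the line $\{x_2=0\}$ containing the two external points: it fixes the evaluation configuration and multiplies the fiber integral by $(-1)^{v+e}$ (a factor $(-1)^v$ from the orientation of the fiber and $(-1)^e$ since $I$ reverses each edge form $\pi_{ij}^*\Omega_{S^1}$), so $c_\gamma=(-1)^{v+e}c_\gamma$ kills every graph with $v+e$ odd. For the remaining internally connected graphs, which have $v+e$ even, I would invoke the standard Kontsevich vanishing inputs: a double edge contributes the square of a $1$-form and hence vanishes, while the admissible even graphs carry an orientation-reversing automorphism and are therefore already zero in $\Graphs_2$ (for $n=2$ the edges are odd, so an edge permutation of odd sign annihilates the graph). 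Making this last step uniform is the delicate point; alternatively, since $\Phi(m_2)$ is a degree-$0$ cocycle cohomologous to the edgeless graph whose nonleading pieces lie in strictly negative weight $v-e$, a grading preserved by the differential, one can argue that the leading term singled out by $\Phi_0=\gr\Phi$ (Lemma~\ref{lem:excess_filtr}) is exactly the edgeless graph and that the higher-weight correction is removable.
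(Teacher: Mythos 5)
Your treatment of $n\geq 3$ is correct and is in the same spirit as the paper, which also disposes of this range by degree counting (as in Lemma~\ref{lem:grvanishing}); your valence count ($2e\geq 3v$ against $(n-1)e=nv$, plus the exclusion of purely internal components when $n=3$) is a clean repackaging of that computation. The reflection argument you give for $n=2$ is also sound --- it is the same involution trick used in the proof of Lemma~\ref{lem:fundimage} --- and correctly kills the graphs with $v+e$ odd.

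The genuine gap is the remaining $n=2$ case: internally connected graphs with $v$ even, $e=2v$, all internal vertices at least trivalent. Your claim that every such graph ``carries an orientation-reversing automorphism and is therefore already zero in $\Graphs_2$'' is unjustified and in fact false. Take $K_4$ on internal vertices $a,b,c,d$ and attach $a$ to the external vertex $1$ and $b$ to the external vertex $2$: this graph has $v=4$, $e=8=2v$, no double edges, is internally connected, and its only nontrivial automorphism fixing the external labels is the swap of $c$ and $d$, which induces an \emph{even} permutation of the edges; since for $n=2$ it is edge permutations (not orientation reversals) that carry signs, the graph is nonzero in $\Graphs_2(2)$, and its coefficient $c_\gamma$ is an honest configuration-space integral that no symmetry kills. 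Your fallback argument does not close this hole either: knowing that $\Phi(m_2)$ is a cocycle homologous to the edgeless graph, with correction terms in strictly negative weight $v-e$ where the degree-zero homology vanishes, only shows that the correction is \emph{exact}, not that it is zero, and exact nonzero elements of degree zero abound. Moreover, ``removable'' contradicts your own (correct) first observation: the coefficients $c_\gamma$ are lift-independent constants, so the lemma amounts to proving that these particular numbers vanish, and no choice of homotopy lift can alter them. This is exactly where the paper inserts a nontrivial analytic input: the Kontsevich vanishing lemma \cite[Theorem~6.5]{K1} (see also \cite{Khovanskii}), which asserts the vanishing of integrals of products of angle forms over configuration spaces of points in the plane, and in particular kills the weight of every graph with at least one internal vertex in this degree. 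Without this (or an equivalent) input, the $n=2$ case of the lemma remains unproved.
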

\begin{proof}
The case $n=2$ is a direct consequence of the Kontsevich vanishing lemma~\cite[Theorem~6.5]{K1}, see also~\cite{Khovanskii}. The case $n\geq 3$ is proved by the same argument as Lemma~\ref{lem:grvanishing} by counting the degree of the  forms corresponding to graphs with two external vertices.
\end{proof}

\subsection{A version of the Cerf Lemma}

\begin{thm}[Algebraic version of the Cerf Lemma]\label{thm:PhiDef}
\begin{multline*}
 H(\Def(\Phi))\cong H(\Def(\Phi_0)) 
 \cong S^+\left(\Bigl(H(\GC_n^2) \oplus \K T\Bigr)[n]\right)[1-n]
 \cong\\ S^+\left(\Bigl(H(\GC_n) \oplus \prod_{1\leq r\equiv 2n+1 \text{ mod $4$}}\K H_r \oplus \K T\Bigr) [n]\right)[1-n]
\end{multline*}
where $S^+$ denotes the completed symmetric algebra without constant term, the $H_r$ denote the hedgehog classes and $T$ the tripod class.
\end{thm}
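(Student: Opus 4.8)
The plan is to deduce the theorem from the computation of $H(\Def(\Phi_0))$ in Theorem~\ref{thm:Phi0} by a spectral sequence comparison along the total excess filtration. The last two isomorphisms require no new work: the second is precisely Theorem~\ref{thm:Phi0}, and the third follows by substituting the decomposition $H(\GC_n^2)\cong H(\GC_n)\oplus\bigoplus_r \K L_r$ and invoking Proposition~\ref{prop:primaction} to identify the loop classes $L_r$ with the hedgehog classes $H_r$. Thus the entire content is the first isomorphism $H(\Def(\Phi))\cong H(\Def(\Phi_0))$, to which I would devote the proof. By Lemma~\ref{lem:excess_filtr} the differential on $\Def(\Phi)$ is compatible with the total excess filtration and the associated graded complex is exactly $\Def(\Phi_0)$; by Remark~\ref{rem:excessboundedabove} the filtration is bounded above, and it is clearly complete. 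Hence the $E_1$-page of the associated spectral sequence is $H(\gr\Def(\Phi))\cong H(\Def(\Phi_0))\cong S^+\bigl((H(\GC_n^2)\oplus\K T)[n]\bigr)[1-n]$, a free completed symmetric algebra on the space $\mathcal{P}:=(H(\GC_n^2)\oplus\K T)[n]$ of primitive generators.

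The crux is that these primitive generators lift to genuine cocycles in $\Def(\Phi)$, not merely to classes on $E_1$. Indeed, by Proposition~\ref{prop:primaction} the primitives of $H(\Def(\Phi_0))$ are the images of the derivations $\K S\ltimes \GC_n^2$ of the \emph{target} operad $\Graphs_n^2$. Crucially, the derivation action $\K S\ltimes\GC_n^2\to \Def(\op P\to\Graphs_n^2)[-1]$ is defined for any operad map into $\Graphs_n^2$, in particular for $\Phi$ itself, and produces honest cocycles. Since $\Phi$ and $\Phi_0$ agree modulo terms of strictly higher excess (Lemma~\ref{lem:excess_filtr}) and the derivation recipe does not involve the source map, the image in $\gr\Def(\Phi)\cong\Def(\Phi_0)$ of each such cocycle is exactly the corresponding $\Phi_0$-derivation cocycle of Proposition~\ref{prop:primaction}, i.e.\ it represents the named primitive class.

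To conclude I would apply Lemma~\ref{lem:SpecSeqConv}. Let $W\subset\Def(\Phi)$ be the subspace spanned by all iterated cup products of the lifted primitive cocycles, choosing one such product for each monomial in a basis of $S^+(\mathcal{P})$. Because the cup product is a chain map, products of cocycles are cocycles, so the differential vanishes on $W$ and $H(W)=W\cong S^+(\mathcal{P})$. Since the cup product is compatible with the excess filtration, with the degree shift by $-1$ in the pattern of Lemma~\ref{lem:tgen_compat1}, its leading term on $\gr\Def(\Phi)\cong\Def(\Phi_0)$ is the cup product there; hence the induced map $H(\gr W)\to H(\gr\Def(\Phi))\cong H(\Def(\Phi_0))$ carries our chosen basis onto the monomial basis of the free symmetric algebra and is therefore an isomorphism. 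Lemma~\ref{lem:SpecSeqConv} then upgrades this to an isomorphism $H(W)=W\to H(\Def(\Phi))$; as $W$ was generated multiplicatively, this is moreover an isomorphism of algebras for the cup product, which is the desired statement.

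I expect the main obstacle to be the verification highlighted in the second paragraph: that the \emph{leading term}, with respect to the excess filtration, of the lifted derivation cocycle for $\Phi$ coincides with the $\Phi_0$-cocycle, so that it does represent the intended class on $E_1$. Concretely one must check that the difference $\Phi-\Phi_0$, which consists of the ``graphs with more vertices'' already appearing in Lemma~\ref{lem:fundimage}, contributes only to strictly higher excess and hence cannot perturb the leading class. A secondary and more routine nuisance is the homotopy-commutativity of the cup product: different orderings or bracketings of a product of generators need not agree on the nose in $\Def(\Phi)$, but they differ by lower-filtration homotopy terms and represent the same class, so the span $W$ and its associated graded classes remain well defined.
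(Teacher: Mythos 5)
Your proposal is correct and follows essentially the same route as the paper's proof: a spectral sequence on the total excess filtration (Lemma~\ref{lem:excess_filtr}, Remark~\ref{rem:excessboundedabove}), lifting the primitive classes to genuine cocycles in $\Def(\Phi)$ via the derivation action of $\K S\ltimes \GC_n^2$ on the target $\Graphs_n^2$ (Proposition~\ref{prop:primaction}), representing the non-primitive classes by (series of) cup products of these, and concluding with Lemma~\ref{lem:SpecSeqConv}. Your expanded verification that the leading excess term of the lifted derivation cocycle agrees with the $\Phi_0$-cocycle is left implicit in the paper but is exactly the right point to check.
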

\begin{proof}
%It will be sufficient to proof the result on the connected part, the general case is then recovered by taking cup products.
%Note that the map $\Phi$ is a deformation of the map $\Phi_0$.
%More concretely, $\Phi_0$ is the "associated graded" map induced by the filtration by total excess.
We consider a spectral sequence induced by the genus filtration. It follows from Lemma~\ref{lem:excess_filtr} that 
on the first page we find $H(\Def(\Phi_0))$.
We claim that the spectral sequence abuts there. 
Indeed, note that all primitive classes stem from an action on $\Graphs_n^2$ and hence are represented by cocycles in $\Def(\hoe_n\to \Graphs_n^2)\cong \Def(\Phi)$. 
Representatives for the non-primitive classes can be obtained by taking (series of) cup products of the representatives of the primitve classes. Invoking again Lemma \ref{lem:SpecSeqConv} (which is applicable by Remark \ref{rem:excessboundedabove}) the result follows.
\end{proof}

\begin{rem}
Note that the above Theorem says that all homology in $H(\Def(\Phi))$ is generated by the image of the derivations of the target $H(\hDer_*(E_n))$.
It does not say however that the map $H(\hDer_*(E_n))\to H(\Def(\Phi))[-1]$ is an isomorphism.
Rather, both homology spaces $H(\hDer_*(E_n))$ and $H(\Def(\Phi))$ are complete symmetric algebras, with products of different degrees, and the map is an isomorphism on the primitive elements.\footnote{The deformation homology $H(\hDer_*(E_n))$ was computed in~\cite[Theorem~1.3]{grt}:
\[ H(\hDer_*(E_n)) \cong S^+\left(\Bigl(H(\GC_n^2) \oplus \K T\Bigr)[n+1]\right)[-n-1].\]}
We conjecture that a nicer version of the above algebraic Cerf Lemma can be formulated when considering $E_{n-1}$ and $E_n$ as Hopf operads, and the Hopf operadic deformation complexes. (This is a natural setting from the rational homotopy point of view~\cite{Fresse}.)
We expect that the homology of the Hopf operadic deformation complexes is just the primitive part of $H(\hDer_*(E_n))$ and $H(\Def(\Phi))$, and the map hence becomes an isomorphism in the Hopf setting.
\end{rem}

\section{Proof of Theorem \ref{thm:main} for $k=1$}

There are different ways to show that the operad maps $C(\FM_n)\to C(\FM_{n+1})$ are non-formal.  In Subsection~\ref{ss:alternative} we resume briefly two other approaches.  The simplest argument that we found is as follows.  We consider the two maps $\Phi:\hoe_{n}\to \Graphs_{n+1}^2$ and $\Psi:\hoe_n\to e_n\stackrel{*}{\to} e_{n+1}\to \Graphs_{n+1}^2$.
Consider the spectral sequences on the deformation complexes $\Def(\Phi)$, $\Def(\Psi)$ induced by the arity filtration.
On the $E^1$ and $E^2$ pages they agree, with the $E^2$ page being the hairy graph homology.
The spectral sequence for $\Def(\Psi)$ abuts there.
If $\Phi$ and $\Psi$ were quasi-isomorphic this would imply by Lemma \ref{lem:E2abutment}  and Remark~\ref{rem:any_resol} below that the spectral sequence for $\Def(\Phi)$ would also abut on the second page. However, as the proof of Theorem \ref{thm:PhiDef} shows that it does not, the differential is the bracket with the tripod class.
Hence we conclude that $\Phi$ and $\Psi$ cannot be quasi-isomorphic operad maps.
\hfill\qed

\subsection{Some homotopy theoretic lemmas}

\begin{lemma}\label{lem:hoenmap}
Let $\op C$, $\op C'$ be coaugmented cooperads quasi-isomorphic to $\e_n^\vee$. %such that $H(\op C)\cong H(\op C')\cong \e_n^\vee$. 
Suppose $\phi:\Omega(\op C)\to \Omega(\op C')$ is a quasi-isomorphism. 
Consider the maps of complexes
\[
 F_r \colon (\op C(r)[1], d_{\op C}) \to (\Omega(\op C)(r), d_{\op C}) \stackrel{\phi}{\lo} (\Omega(\op C')(r),d_{\op C'}) \to (\op C'(r)[1],d_{\op C'}),
\]
where the right hand map projects onto trees with only one node.
Then $F_r$ is a quasi-isomorphism for each $r$.
\end{lemma}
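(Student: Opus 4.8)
The plan is to compare the two cobar constructions through the filtration by the number of internal vertices (``nodes'') of trees, and to extract the statement from the induced spectral sequences. I would filter $\Omega(\op C)$ and $\Omega(\op C')$ by $\mathcal F^p=\{\text{series of trees with}\geq p\text{ nodes}\}$. In each arity this filtration is bounded (a tree with $r$ reduced leaves has at most $r-1$ nodes), so the associated spectral sequences converge and Lemma~\ref{lem:SpecSeqConv} applies. Since the generators $\op C[1]$ sit in arities $\geq 2$, an operad morphism cannot decrease the number of nodes, so $\phi$ preserves $\mathcal F^\bullet$. The cobar differential $d=d_{\op C}+d_{\mathrm{split}}$ splits into the node-preserving internal part $d_{\op C}$ and the node-raising cocomposition part $d_{\mathrm{split}}$; because the projection onto single-node trees annihilates everything of node-degree $\geq 2$, one checks that $F_r$ is precisely the node-degree-$1$ component of the associated graded map $\gr\phi$, and in particular an honest chain map $(\op C(r)[1],d_{\op C})\to(\op C'(r)[1],d_{\op C'})$.

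Next I would compute the spectral sequence. On $E^0$ the differential is $d_{\op C}$, so $E^0=\mathcal F(\op C[1])$ is the free operad with its internal differential. Over a field homology commutes with the free operad functor, whence $E^1=\mathcal F(H(\op C)[1])$ with $d^1$ induced by the cocomposition. The hypothesis $\op C\simeq\e_n^\vee$ gives $H(\op C)\cong\e_n^\vee$ as graded cooperads, so $(E^1,d^1)\cong\Omega(\e_n^\vee)=\hoe_n$. By Koszulness of $\e_n$ one has $E^2=H(\hoe_n)=\e_n$; comparing graded dimensions with the abutment $H(\Omega(\op C))\cong\e_n$ shows the sequence degenerates at $E^2$. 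The same holds for $\op C'$, and $\phi$ induces a map of these spectral sequences.

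Then I would run the comparison. As $\phi$ is a quasi-isomorphism and the filtration is bounded, $E^\infty(\phi)=\gr H(\phi)$ is an isomorphism; by degeneration $E^2(\phi)$ is an isomorphism, i.e.\ $E^1(\phi)\colon\hoe_n\to\hoe_n$ is a quasi-isomorphism. This $E^1(\phi)$ is a node-preserving operad map whose node-degree-$1$ part is $H(F)\colon\e_n^\vee\to\e_n^\vee$; the chain-map condition forces $H(F)$ to be a cooperad endomorphism and $E^1(\phi)=\Omega(H(F))$. Since $\e_n^\vee$ has zero differential, $\hoe_n$ has decomposable differential, i.e.\ it is a minimal model, and a self quasi-isomorphism of a minimal model is an isomorphism. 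Restricting the isomorphism $\Omega(H(F))$ to its generators shows that $H(F)$ is an isomorphism, hence $H(F_r)=H(F)|_{\text{arity }r}$ is an isomorphism and each $F_r$ is a quasi-isomorphism.

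I expect the two delicate points to be the degeneration at $E^2$, which rests on the Koszulness of $\e_n$ together with finite-dimensionality in each arity and internal degree, and the final rigidity step: carefully verifying that $E^1(\phi)$ really is the cobar map of an honest cooperad morphism, and then invoking the isomorphism-rigidity of minimal models to descend from a quasi-isomorphism of resolutions to an isomorphism on generators. Everything else is bookkeeping with the node filtration.
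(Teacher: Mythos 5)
Your proposal starts the same way as the paper (the node filtration, the identification of the associated graded page with $\hoe_n=\Omega(\e_n^\vee)$, and the observation that $F_r$ is the node-degree-one part of $\gr\phi$), but the mechanism by which you extract the conclusion is genuinely different. The paper never uses convergence or degeneration of the spectral sequence, nor minimal-model theory: it uses the quasi-isomorphism hypothesis on $\phi$ \emph{only in arity $2$}, composes the induced operad map $\phi_1\colon\hoe_n\to\hoe_n$ with an explicit automorphism so that $f_2=\mathit{id}$ (this is where the $n=1$ versus $n\geq 2$ case distinction enters), and then proves $f_r=\mathit{id}$ by induction on arity: the chain-map identity together with $f_j=\mathit{id}$ for $j<r$ forces $d(f_r(x)-x)=0$ inside the space of generators, i.e.\ all cocompositions of $f_r(x)-x$ vanish, and since $\e_n^\vee$ is cogenerated in arity $2$ this kills $f_r(x)-x$ for $r\geq 3$. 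Your route instead pushes the quasi-isomorphism property of $\phi$ through the entire spectral sequence (convergence, degeneration at $E^2$, hence $E^1(\phi)$ is a quasi-isomorphism of $\hoe_n$), identifies $E^1(\phi)=\Omega(H(F))$ — which is correct, by node-grading preservation and conilpotence — and then invokes isomorphism-rigidity of minimal operads. This buys you uniformity (no normalization step, no low-arity case analysis), at the price of needing actual convergence and degeneration, which the paper's argument avoids entirely.

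There is one step whose justification, as written, is circular: you prove degeneration at $E^2$ by ``comparing graded dimensions with the abutment $H(\Omega(\op C))\cong\e_n$'', but that identification of the abutment is not among the hypotheses and is not automatic — cobar does not preserve quasi-isomorphisms of cooperads in general, so $\op C\simeq\e_n^\vee$ does not immediately give $\Omega(\op C)\simeq\e_n$; the abutment is precisely what the degenerate spectral sequence is supposed to compute. The fix uses only tools you already invoke: Koszulness of $\e_n$ says that $H(\hoe_n)$ is concentrated in syzygy degree zero, i.e.\ in the \emph{top} node degree $r-1$ in each arity $r$, so $E^2$ lives in a single filtration degree per arity and all higher differentials vanish for support reasons; degeneration and the computation of the abutment then come for free, with no dimension count. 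A second, smaller caveat: your boundedness claim (``at most $r-1$ nodes in arity $r$'') presumes $\overline{\op C}(1)=0$, which the hypotheses do not guarantee (and which fails for the bar constructions the paper applies the lemma to); since your argument, unlike the paper's, genuinely needs convergence, you should either pass to the ascending filtration by at most $p$ nodes (exhaustive and bounded below, hence convergent) or note that in each fixed arity and homological degree the node count is bounded. With these repairs your proof is a valid, and structurally different, alternative to the paper's.
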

\begin{proof}
We may consider filtrations on $\Omega(\op C),\Omega(\op C')$ by the number of nodes in trees occurring in the cobar construction.
Consider the associated spectral sequences. On the zero-th page $\phi$ induces a map
\[
(\Omega(\op C), d_{\op C}) \to (\Omega(\op C'),d_{\op C'})
\]
so that on the $E^1$ page we have a map 
\[
\phi_1: (\Omega(\e_n^\vee), d_{1}) \to (\Omega(\e_n^\vee),d_{1})
\]
where $d_1$ is the differential on $\hoe_n=\Omega(e_n^\vee)$.

We want to show the the induced maps on homology 
\[
 f_r : \e_n^\vee(r)\cong  H(\op C(r))\to H(\op C'(r))\cong \e_n^\vee(r)
\]
are isomorphisms.
For $r=2$ the statement is clear since $\phi$ is a quasi-isomorphism by assumption.
In fact, by composing $\phi_1$ with an automorphism  we may assume without loss of generality that $f_2=\mathit{id}$. (In case $n\geq 2$ it\rq{}s enough to take an automorphism rescaling product and bracket. In case $n=1$ one applies an automorphism rescaling the product and if necessary the one reversing the product to the opposite one.)
For higher $r$, we will assume inductively that we have shown that $f_j=\mathit{id}$ for $j=2,\dots,r-1$.
Let $x\in \e_n^\vee(r)[1] \subset \hoe_n(r)$ be given.
Then $f_r(x)\in \e_n^\vee(r)[1]\subset \hoe_n(r)$ has to satisfy
\[
 d f_r(x) = dx
\]
where $d$ is the differential in $\hoe_n$.
In other words the element $f_r(x)-x$ is $d$-closed, and hence is sent to zero by all cooperadic cocompositions.
However, since all cogenerators of $e_n^\vee$ are located in arity $2$, this implies that $f_r(x)-x=0$ for $r\geq 3$.
Since $x$ was arbitrary, the statement follows.
\end{proof}

% \begin{lemma}\label{lem:hoenmap}
%  Suppose $\phi:\Omega(\op C)\to \Omega(\op C')$ is a quasi-isomorphism, where $\op C$ and $\op C'$ are cooperads such that $H(\op C)\cong H(\op C')\cong e_n^\vee$. 
%  Consider the maps of complexes
%  \[
%  f_r \colon (\op C(r)[1], d_{\op C}) \to (\Omega(\op C)(r), d_{\op C}) \stackrel{\phi}{\lo} (\Omega(\op C')(r),d_{\op C'}) \to (\op C'(r)[1],d_{\op C'}),
%  \]
% where the right hand map projects onto trees with only one node.
% Then $f_r$ is a quasi-isomorphism for each $r$.
% \end{lemma}
% \begin{proof}
% For $r=2$ the statement is clear since $\phi$ must be a quasi-isomorphism by assumption.
% In fact, by composing $\phi$ with an automorphism rescaling the product (and bracket for $n\geq 2$) we may assume without loss of generality that $f_2=\mathit{id}$.
% For higher $r$, we will assume inductively that we have shown that $f_j=\mathit{id}$ for $j=2,\dots,r-1$.
% Let $x\in e_n^\vee(r)[1] \subset \hoe_n(r)$ be given.
% Then $f_r(x)\in e_n^\vee(r)[1]\subset \hoe_n(r)$ has to satisfy
% \[
%  d f_r(x) = dx
% \]
% where $d$ is the differential in $\hoe_n$.
% In other words the element $f_r(x)-x$ is $d$-closed, and hence is sent to zero by all cooperadic cocompositions.
% However, since all cogenerators of $e_n^\vee$ are located in arity $2$, this implies that $f_r(x)-x=0$ for $r\geq 3$.
% Since $x$ was arbitrary, the statement follows.
% \end{proof}

\begin{lemma}\label{lem:E2abutment}
 Consider a commutative diagram of the form 
 \[
  \begin{tikzpicture}
   \matrix[diagram](m) { \op P & \op P' \\ \op Q & \op Q' \\ };
   \draw[-latex] (m-1-1) edge node[auto] {$\phi$} (m-1-2) edge node[left] {$f$} (m-2-1)
        (m-2-1) edge node[auto] {$\psi$}  (m-2-2)
        (m-1-2) edge node[auto] {$g$} (m-2-2);
  \end{tikzpicture}
 \]
where the horizontal arrows are quasi-isomorphisms, and where $\op P$ is quasi-isomorphic to $\e_n$.
Consider the two spectral sequences associated to the arity filtrations on $\Def(\Omega(B(\op P))\stackrel{f}{\lo}  \op Q)$ and $\Def(\Omega(B(\op P'))\stackrel{g}{\lo} \op Q')$.
Then one of the two spectral sequences abuts at the $E^2$ page if and only if so does the other.
\end{lemma}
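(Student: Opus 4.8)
The plan is to show that the commutative square induces an \emph{isomorphism of the two arity spectral sequences from the $E^1$ page onward}, after which the equivalence of $E^2$-degeneration is immediate: under such an isomorphism the page differentials $d_r$ correspond, so $d_r=0$ for all $r\geq 2$ holds for one spectral sequence exactly when it holds for the other, which is precisely what ``abuts at the $E^2$ page'' means. Since a filtered chain map (or a zigzag of such) inducing an isomorphism on $E^1$ induces an isomorphism on every subsequent page by the standard comparison theorem, the whole problem reduces to producing a filtration-respecting comparison between $\Def(\Omega(B(\op P))\stackrel{f}{\to}\op Q)$ and $\Def(\Omega(B(\op P'))\stackrel{g}{\to}\op Q')$ that is a quasi-isomorphism on the associated graded.

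To build this comparison I would route through the intermediate complex $\Def(\Omega(B(\op P))\to\op Q')$. Using that the bar–cobar construction is functorial, $\phi$ yields a cooperad map $B(\phi)\colon B(\op P)\to B(\op P')$ and hence a map $\Omega(B(\phi))\colon \Omega(B(\op P))\to\Omega(B(\op P'))$. Postcomposition with the operad quasi-isomorphism $\psi$ gives
\[
\psi_*\colon \Def(\Omega(B(\op P))\to\op Q)\to \Def(\Omega(B(\op P))\to\op Q'),
\]
and precomposition with $B(\phi)$ gives
\[
B(\phi)^*\colon \Def(\Omega(B(\op P'))\to\op Q')\to \Def(\Omega(B(\op P))\to\op Q').
\]
Both are chain maps of the twisted convolution complexes: $\psi_*$ carries the Maurer--Cartan element for $f$ to the one for $\psi\circ f$, while $B(\phi)^*$ carries the Maurer--Cartan element for $g$ to the one for $g\circ\Omega(B(\phi))$, and by the commutativity of the square together with functoriality of the counit $\Omega(B(\op P))\to\op P$ these two Maurer--Cartan elements coincide (both describe $\psi f=g\phi\colon\op P\to\op Q'$). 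Crucially, both $B(\phi)$ and $\psi$ preserve arity (the bar codifferential preserves the number of leaves), so both maps are compatible with the arity filtration and therefore induce morphisms of the associated spectral sequences.

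It then remains to check that each of these two maps is an isomorphism on $E^1$. On the arity-$N$ associated graded the twisting terms raise arity and drop out, leaving the internal Hom-complex $(\Hom_{\bbS_N}(B(\op P)(N),\op Q(N)),d_0)$, whose differential is built from $d_{B(\op P)}$ and $d_{\op Q}$. Over our characteristic-zero ground field every $\bbS_N$-module is semisimple, so $\Hom_{\bbS_N}(-,-)=\bigl(\Hom_\K(-,-)\bigr)^{\bbS_N}$ with the invariants functor exact; hence $\Hom_\K(B(\op P)(N),-)$ and $\Hom_\K(-,\op Q'(N))$ preserve quasi-isomorphisms in the (degreewise finite, bounded) range provided by $\op P\simeq\e_n$. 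Therefore $\psi_*$ is an $E^1$-isomorphism because $\psi(N)$ is an arity-wise quasi-isomorphism, and $B(\phi)^*$ is an $E^1$-isomorphism because $B(\phi)(N)$ is one --- the latter following from the fact that a quasi-isomorphism of reduced operads induces an arity-wise quasi-isomorphism of bar constructions (filter by number of nodes; the $E^1$ term is $\op T^c(H(\op P)[1])$ in each arity). I expect this $E^1$-isomorphism step to be the main obstacle, since it requires pinning down exactly which part of the twisted differential survives to the associated graded (in particular handling the arity-one component of the Maurer--Cartan element, controlled by the reducedness coming from $\op P\simeq\e_n$) and confirming the finiteness/boundedness needed for $\Hom$ to preserve quasi-isomorphisms; the completeness of the arity filtration and Lemma~\ref{lem:SpecSeqConv} supply whatever convergence is required. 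Once both legs of the zigzag are $E^1$-isomorphisms, the two outer spectral sequences are isomorphic from $E^1$ on through the common middle term, and the asserted equivalence of $E^2$-abutment follows.
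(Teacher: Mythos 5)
Your proposal matches the paper's proof essentially step for step: the paper uses the very same zigzag through the intermediate complex $\Def(\Omega(B(\op P))\to \op Q')$ (postcomposition with $\psi$ on one side, precomposition with the map induced by $\phi$ on the other), the same compatibility with the arity filtration, and the same identification of the $E^1$ pages, concluding that the spectral sequences are isomorphic from there onward so that $E^2$-abutment transfers. The only difference is in a sub-step: to see that the precomposition leg is an $E^1$-isomorphism you invoke the standard fact that the bar construction preserves arity-wise quasi-isomorphisms, whereas the paper invokes its Lemma~\ref{lem:hoenmap}, whose extra generality (it applies to quasi-isomorphisms $\Omega(\op C)\to\Omega(\op C')$ not necessarily induced by cooperad maps) is what later justifies Remark~\ref{rem:any_resol}.
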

\begin{proof}
We consider the zigzag of quasi-isomorphisms
\[
 \Def(\Omega(B(\op P))\stackrel{f}{\lo} \op Q) \to \Def(\Omega(B(\op P))  \stackrel{\phi\circ g}{\lo} \op Q')  \stackrel{p}{\longleftarrow}\Def(\Omega(B(\op P'))  \stackrel{g}{\lo} \op Q').
\]
On all three deformation complexes there are filtrations by arity and the maps are compatible with these filtrations.
On the $E^0$ page we consider complexes
\[
 \prod_r \Hom_{S_r}(B(\op P)(r),\op Q(r))   \to \prod_r \Hom_{S_r}(B(\op P)(r),\op Q'(r)) \leftarrow \prod_r \Hom_{S_r}(B(\op P')(r),\op Q'(r))
\]
with the differential being induced by that of $B(\op P)$, $B(\op P')$ and $\op Q$, $\op Q'$.
On the $E^1$ page we hence consider a zig-zag
\[
 \prod_r \Hom_{S_r}(e_n^\vee(r), H\op Q(r))   \to \prod_r \Hom_{S_r}(e_n^\vee(r),H\op Q'(r)) \leftarrow \prod_r \Hom_{S_r}(e_n^\vee(r),H\op Q'(r)).
\]
The left hand arrow is an isomorphism since so is the map $H(\psi)$ by assumption.
%The map $p$ can be written as $p=p_0+p_1+\cdots$ where $p_j$ increases the arity by $j$ units.
 %The map $p_0$ is an isomorphism since by Lemma \ref{lem:hoenmap} and hence so is the right hand map in the last zigzag.
 The right hand arrow is an isomorphism by Lemma~\ref{lem:hoenmap}. It follows that all complexes are quasi-isomorphic.

It also follows that the induced morphisms on the $E^2$ page of the spectral sequences are isomorphisms as well, and hence if one of the spectral sequences abuts at this page, so have to do the others.
\end{proof}

\begin{rem}\label{rem:any_resol}
In the above proof it is inessential that we took the bar-cobar resolutions of $\op P$, $\op P'$. In fact, one can take any other cofibrant resolution of the form 
 $\Omega(C)\twoheadrightarrow \op P$, $\Omega(\op C')\twoheadrightarrow P'$, or no resolution if $\op P$, $\op P'$ are already of that form.
\end{rem}
%
%\subsection{The proof of Theorem \ref{thm:main} for $k= 1$}
%
%In the case $k=1$ we will show that the operad map $C(\FM_n)\to C(\FM_{n+1})$ are non-formal. 
%
%In other words, we consider the two maps $\Phi:\hoe_{n}\to \Graphs_{n+1}$ and $\Psi:\hoe_n\to e_n\to e_{n+1}\to \Graphs_{n+1}$.
%Consider the spectral sequences on the deformation complexes $\Def(\Phi)$, $\Def(\Psi)$ induced by the arity filtration.
%On the $E^1$ and $E^2$ page they agree, with the $E^2$ page being the hairy graph homology.
%The spectral sequence for $\Def(\Psi)$ abuts there.
%If $\Phi$ and $\Psi$ were quasi-isomorphic this would imply by Lemma \ref{lem:E2abutment} that the spectral sequence for $\Def(\Phi)$ would also abut on the second page. However, as the proof of Theorem \ref{thm:PhiDef} shows it does not, the differential is the bracket with the tripod class.
%Hence we conclude that $\Phi$ and $\Psi$ cannot be quasi-isomorphic operad maps.
%\hfill\qed

\subsection{Remarks and  alternative arguments}\label{ss:alternative}
Another way to show that the natural inclusion $E_n\to E_{n+1}$ is not formal would be to compare the deformation homology $H(\Def(E_n \to E_{n+1}))$ with $H(\Def(\e_n\stackrel{*}{\to}\e_{n+1}))$. However it turns out that both are completed symmetric algebras generated by countably many classes in both positive and negative degrees. In the first case the negative generators are tripod and loop classes --- all other generators are positive. In the second case there are much more both negative and positive generators~\cite{TW}, but as a result the total homology in both cases are infinite dimensional vector spaces in every degree, and thus as graded vector spaces are indistinguishable.  Thus to show that the deformation homology is different, one has to use the algebraic structures on this homology, but also natural topology that is necessary to define the completed tensor product, completed cup product, and the space of primitives with respect to the completed cup-product.

  Another approach would be to look at  the maps 
\begin{equation}
\label{equ:HDefmap}
H(\hDer_*(E_{n+1})) \to H(\Def(E_n \to E_{n+1}))[1],
\end{equation}
which are very different depending on whether on the right hand side one deforms the natural map or the "trivial" map $E_n \stackrel{*}{\lo} E_{n+1}$ factoring through $\Com$. 
Concretely, the scaling and hedgehog classes in $H(\hDer_*(E_{n+1}))$
are in the kernel of the map to  $H(\Def(E_n \stackrel{*}{\lo} E_{n+1}))$~\cite{TW}, while
they are sent to non-zero classes in $H(\Def(E_n \to E_{n+1}))$.
Namely, the scaling class is sent to the tripod class, and the loop classes are sent to the hedgehog classes.
A subtlety in this approach is that the action of the homotopy automorphism of $E_{n+1}$ on the scaling and loop classes can move those to some elements which would not be any more in the kernel of~\eqref{equ:HDefmap}. So, one would need to show that there will still be classes in the kernel with similar leading terms.

%This difference in the map is close to constituting a proof of Theorem \ref{thm:main} for $k=1$, indicating that the natural map $E_n\to E_{n+1}$ is not quasi-isomorphic to the map $E_n \stackrel{*}{\lo} E_{n+1}$ factoring through $\Com$.

\section{Unitary version of the results}
Above we considered $E_n$ operads as having no operations of arity zero.
However, the topological little cubes operads are naturally endowed with a zero-ary operation, insertion of which amounts to forgetting the cube inserted into.
Similarly, the operads $\e_n$, $\Graphs_n$ etc. have natural extensions having an operation in arity zero which we denote by $\e_n^\bbo$, $\Graphs_n^\bbo$ etc.
Concretely,
\begin{align*}
\e_n^\bbo(N) &= 
\begin{cases}
\e_n(N) &\text{for $N\geq 1$} \\
\K\, \bbo &\text{for $N=0$}
\end{cases}
&
\Graphs_n^\bbo(N) &= 
\begin{cases}
\Graphs_n(N) &\text{for $N\geq 1$} \\
\K\, \bbo &\text{for $N=0$}
\end{cases}.
\end{align*}
The operadic compositions are extended as follows. The zero-ary operation $\bbo$ is killed by the bracket in $\e_n$ and is a unit with respect to the product, i.e.,
\begin{align}\label{equ:uen relations}
[\cdot, \bbo] &= 0 & \cdot\wedge \bbo &= \mathit{Id}_{op} 
\end{align}
where $\mathit{Id}_{op}\in \e_n(1)$ is the operadic unit.

Similarly, inserting the zero-ary operation $\bbo\in \Graphs^\bbo(0)$ into a vertex $j$ of a graph $\Gamma\in \Graphs(N)$ forgets that vertex if it has valence zero, and maps the graph to zero otherwise.

Note that the action of $\GC_n$ on $\Graphs_n$ by operadic derivations naturally extends to $\Graphs_n^\bbo$.

Note also that M. Kontsevich's proof of the formality of the little $n$-cubes operad outlined in section \ref{sec:formality review} naturally extends to the unital case to produce a zig-zag of quasi-isomorphisms
\[
 C(\FM_n^\bbo) \to \Graphs_n^\bbo \leftarrow \e_n^\bbo.
\]

Note also that our proof of Theorem \ref{thm:main} for $k\geq 2$ given in section \ref{sec:proof k geq 2} is independent of the presence or non-presence of operations in arity zero.
Furthermore, the statement that the map $E_n\to E_{n+1}$ is not formal is stronger then that of $E_n^\bbo\to E_{n+1}^\bbo$ being non-formal.
Hence we arrive at the following unital version of Theorem \ref{thm:main}.

\begin{cor}[Unital version of Theorem \ref{thm:main}]
Over $\R$, the operad maps $E_n^\bbo\to E_{n+k}^\bbo$ are formal for $k\geq 2$ and non-formal for $k=1$.
\end{cor}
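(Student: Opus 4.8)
The plan is to treat the two cases $k\geq 2$ and $k=1$ by separate arguments, in each case leveraging the corresponding non-unital result together with the observations recorded just before the statement.

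For $k\geq 2$ I would simply rerun the proof of Theorem~\ref{thm:main} from Section~\ref{sec:proof k geq 2} with every operad replaced by its unital counterpart. Concretely, one assembles the unital analogue of the diagram~\eqref{equ:formalitydiagram}, using the unital zigzag $C(\FM_n^\bbo) \to \Graphs_n^\bbo \leftarrow \e_n^\bbo$ recalled above together with the natural inclusions $\FM_n^\bbo \to \FM_{n+k}^\bbo$. The only point to verify is that the unital analogue of Lemma~\ref{lem:grvanishing} holds, i.e.\ that the composite $C(\FM_n^\bbo)\to C(\FM_{n+k}^\bbo)\to \Graphs_{n+k}^\bbo$ lands in the sub-operad of edgeless graphs for $k\geq 2$. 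But the arity-zero component contributes only the generator $\bbo$, which carries no edges, and in all positive arities the graphs, edges and configuration-space integrals are literally the same as in the non-unital case. Hence the form-degree estimate of Lemma~\ref{lem:grvanishing} applies unchanged, the upper square of the unital diagram commutes, all vertical arrows remain quasi-isomorphisms, and formality follows exactly as before.

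For $k=1$ I would instead deduce non-formality of $E_n^\bbo\to E_{n+1}^\bbo$ directly from the non-formality of $E_n\to E_{n+1}$ provided by Theorem~\ref{thm:main}. Let $U$ denote the functor from unital operads to non-unital operads that forgets the arity-zero component. This functor preserves quasi-isomorphisms, since a map that is a quasi-isomorphism in every arity is in particular one in every arity $\geq 1$. Consequently, were the unital map $E_n^\bbo\to E_{n+1}^\bbo$ formal, applying $U$ to the connecting zigzag of quasi-isomorphisms would produce a zigzag exhibiting $E_n\to E_{n+1}$ as formal; moreover $U$ carries the homology map $\e_n^\bbo\to\e_{n+1}^\bbo$ to $\e_n\to\e_{n+1}$, so the end of the zigzag is indeed the correct non-unital homology map. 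This contradicts Theorem~\ref{thm:main}, and hence $E_n^\bbo\to E_{n+1}^\bbo$ is non-formal.

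The formality half is essentially mechanical once one observes the insensitivity of the form-degree estimate to the presence of the arity-zero operation, so I expect the only genuine point requiring care to be the $k=1$ reduction: one must be sure that forgetting the arity-zero operation is compatible with the whole zigzag structure of a formality datum and transports the homology map to the expected non-unital map. This verification is elementary, but it is the logical hinge on which the non-formality assertion rests.
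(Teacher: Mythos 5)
Your proposal is correct and follows essentially the same route as the paper: for $k\geq 2$ the paper likewise observes that the degree-counting proof of Theorem~\ref{thm:main} is insensitive to the presence of arity-zero operations and runs through the unital zigzag $C(\FM_n^\bbo)\to\Graphs_n^\bbo\leftarrow\e_n^\bbo$, and for $k=1$ it invokes exactly your reduction, namely that non-formality of $E_n\to E_{n+1}$ is the stronger statement since forgetting the arity-zero component carries any unital formality zigzag to a non-unital one. Your write-up merely makes explicit (via the functor $U$) what the paper states in one sentence.
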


One may also consider the deformation complexes of the operad maps $E_n^\bbo\to E_{n+k}^\bbo$.
To this end, one in particular needs a tractable quasi-free algebraic model for $E_n^\bbo$.
This is provided by the Koszul duality theory of operads with zero-ary operations developed by Hirsh and Mill\`es \cite{HirshMilles}.
Let us recall the relevant statements from loc. cit., in the special case we are interested in. One defines the cooperad $(\e_n^\bbo)^\vee$ as the cooperad cogenerated by $\e_n^\vee$ and an additional operation of arity zero and degree $1$, and no additional corelations.
Concretely, we may identify 
\[
(\e_n^\bbo)^\vee(N) \cong \bigoplus_{k\geq 0} \left( \e_n^\vee(N+k)\otimes \K[-1]^{\otimes k}\right)_{\bbS_k}.
\]
Following the theory of \cite{HirshMilles} (applied to the case $\e_n^\bbo$) we may construct a quasi-free resolution of $\e_n^\bbo$ as 
\[
\hoe_n^\bbo := (\Omega((\e_n^\bbo)^\vee), d_\Omega + d_{\bbo})
\]
where $d_\Omega$ is the usual differential on the bar construction and $d_{\bbo}$ is an additional piece ensuring that the relations \eqref{equ:uen relations} hold in homology.

Suppose we are given an operad map $\hoe_n^\bbo\to \Graphs_{n+k}$. 
We may form the complex of derivations of that map
\begin{align*}
\Der(\hoe_n^\bbo\to \Graphs_{n+k}^\bbo)
&\cong 
\prod_{N\geq 0} \Hom_{\bbS_N}( \overline{ (\e_n^\bbo)^\vee}(N)[1], \Graphs_{n+k}^\bbo(N))
\\ &\cong \left(\prod_{\substack{ N, r \geq 0 \\ (N,r)\neq (1,0) }} \left( \e_n\{-n\}(N+r)\otimes \K[1]^{\otimes r}  \otimes \Graphs_{n+k}^\bbo(N) \right)^{\bbS_N\times \bbS_r}\right)[-1]
\end{align*}

The homology of this complex will contain one class corresponding to the "trivial" derivation given by rescaling by $(\text{arity}-1)$. We kill this one class by passing to the reduced deformation complex
\[
\Der_*(\hoe_n^\bbo\to \Graphs_{n+k}^\bbo)
:= 
\left(\prod_{\substack{ N, r \geq 0 \\ N+r\geq 2 }} \left( \e_n\{-n\}(N+r)\otimes \K[1]^{\otimes r}  \otimes \Graphs_{n+k}^\bbo(N) \right)^{\bbS_N\times \bbS_r}\right)[-1],
\]
whose suspension we denote by $\Def(\hoe_n^\bbo\to \Graphs_{n+k}^\bbo)$.\footnote{Notice that passing to the reduced version of the complex of derivations  in the unital setting is equivalent to saying   that we consider deformations that keep fixed  the arity zero operation corresponding to forgetting a cube in a configuration.}
Note that there is a map of operads
\[
\hoe_n\to \hoe_n^\bbo
\]
and hence a map of deformation complexes 
\[
\Der(\hoe_n^\bbo\to \Graphs_{n+k}^\bbo) \to \Der(\hoe_n\to \Graphs_{n+k}^\bbo) \cong \Der(\hoe_n\to \Graphs_{n+k}).
\]
These maps in turn yield a map
\[
\Def(\hoe_n^\bbo\to \Graphs_{n+k}^\bbo) \to  \Def(\hoe_n\to \Graphs_{n+k}).
\]

The following proposition shows that (for our purposes) the deformation theory in the unital case is identical to that in the non-unital case considered above.

\begin{prop}\label{pr:unital}
Suppose the operad map $\hoe_n^\bbo\to \Graphs_{n+k}^\bbo$ is a  quasi-isomorphism in arity zero and that the product generator is sent to a non-zero multiple of the graph with two vertices and no edge.
Then the map 
\[
\Def(\hoe_n^\bbo\to \Graphs_{n+k}^\bbo) \to  \Def(\hoe_n\to \Graphs_{n+k})
\]
is a quasi-isomorphism.
\end{prop}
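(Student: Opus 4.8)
The strategy is to present the map as the projection in a short exact sequence of complexes and to prove that its kernel is acyclic by a Koszul-type contraction. By construction the map is precomposition with the inclusion of dg cooperads $\e_n^\vee\hookrightarrow(\e_n^\bbo)^\vee$ onto the summand with no arity-zero cogenerators. Since the differential $d_\bbo$ only lowers the number $r$ of arity-zero cogenerators, $\e_n^\vee$ is a genuine sub-dg-cooperad, the restriction map is a surjective chain map, and its kernel $K$ is exactly the part of $\Def(\hoe_n^\bbo\to\Graphs_{n+k}^\bbo)$ supported on the summands with $r\geq 1$. Here I also use the identification $\Def(\hoe_n\to\Graphs_{n+k}^\bbo)\cong\Def(\hoe_n\to\Graphs_{n+k})$ recorded above, which holds because the cogenerators of $\e_n^\vee$ sit in arity $\geq 2$ and so never pair with the arity-zero part $\K\,\bbo$ of the target. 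It thus suffices to prove $H(K)=0$.

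To that end I would grade $K$ by the number $r\geq 1$ of arity-zero cogenerators and single out two operations of opposite $r$-degree. The first is the component $\partial$ of the differential that absorbs one arity-zero operation into a product, realized by combining $d_\bbo$ on the source with the insertion of $\bbo\in\Graphs_{n+k}^\bbo(0)$ into the target; the second is a homotopy $h$ implementing the opposite move, adjoining one arity-zero operation to a product. The two hypotheses are precisely what make this contraction nondegenerate: the assumption that the product generator is sent to a nonzero multiple of the two-vertex graph makes the composite ``adjoin a unit, then absorb it'' equal to a nonzero multiple of the identity via the relation $\cdot\wedge\bbo=\mathit{Id}_{op}$ of \eqref{equ:uen relations}, while the assumption that the map is a quasi-isomorphism in arity zero guarantees that the arity-zero cogenerator is sent to a nonzero multiple of $\bbo$, so that $\partial$ is the genuine contraction rather than something degenerate.

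Because the $r$ arity-zero cogenerators are odd and symmetrized, the pair $(\partial,h)$ behaves like contraction and exterior multiplication on an exterior algebra. Concretely, after passing to the associated graded for a suitable filtration the induced operators $\bar\partial$ and $\bar h$ should satisfy the number-operator identity
\[
\bar\partial\,\bar h+\bar h\,\bar\partial \;=\; r\cdot\mathit{Id}
\]
up to an invertible scalar fixed by the two hypotheses. Since $\K$ has characteristic zero, $\bar h/r$ is then a contracting homotopy for the leading differential on the $r\geq 1$ part, so the associated graded of $K$ is acyclic; note that $r=0$, which is exactly the summand projected away to form the non-unital complex, is excluded, consistently with that quotient being far from acyclic. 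Choosing the filtration to be complete and bounded above, which follows from a degree estimate in the spirit of Remark~\ref{rem:excessboundedabove}, Lemma~\ref{lem:SpecSeqConv} upgrades this to $H(K)=0$, and the short exact sequence yields the claimed quasi-isomorphism.

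The main obstacle I anticipate is the careful identification of $\partial$ and $h$ from the Hirsh--Mill\`es differential $d_\bbo$ together with the Maurer--Cartan bracket on $\Def(\hoe_n^\bbo\to\Graphs_{n+k}^\bbo)$, and the verification, with all Koszul signs and combinatorial coefficients, of the number-operator identity above. This is the one place where both hypotheses are genuinely used, and one must also ensure both that the chosen filtration renders $\partial$ the leading differential and is bounded above, and that the contraction of the associated graded is not obstructed by the lower-order terms of the full differential.
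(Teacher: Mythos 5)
Your reduction of the Proposition to the acyclicity of the kernel $K$ (the part of the unital complex with $r\geq 1$) is fine and is indeed equivalent to the statement; the gap lies in your proof that $K$ is acyclic. The homotopy $h$ you propose lowers $r$ by one (it turns a marker back into an isolated external vertex), so it maps the $r=1$ part of $K$ into the $r=0$ part, i.e.\ \emph{out of} $K$; hence no number-operator identity $\bar\partial\,\bar h+\bar h\,\bar\partial = r\cdot\mathit{Id}$ can hold on $\gr\, K$. Worse, the graded-level statement you are aiming for is simply false: $\gr\, K$ is \emph{not} acyclic. For the filtration by $N+r$, the associated graded differential is $\delta+d'$, where $d'$ inserts $\bbo\in\Graphs_{n+k}^\bbo(0)$ into a valence-zero external vertex; writing $s$ for the number of such vertices, the fixed-$(r+s)$ pieces are Koszul complexes, acyclic for $r+s>0$ --- but only as long as the $r=0$, $s>0$ terms are kept. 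The $r\geq 1$ part is a \emph{truncation} of these Koszul complexes, and a truncated Koszul complex has homology at its edge: one finds that $H(\gr\, K)$ is (a degree shift of) the ``degenerate'' part of the $E^1$ page of the non-unital complex, i.e.\ the classes represented by graphs with isolated external vertices (for instance the class of the product $\wedge\in\e_{n+k}(2)$, represented by the edgeless two-vertex graph, gives such a class). These classes die only on a later page of the spectral sequence, through differentials involving the Maurer--Cartan element, so the hypothesis of Lemma~\ref{lem:SpecSeqConv} fails for $K$ and your final step cannot be carried out.

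The paper's proof avoids exactly this trap by never passing to the kernel. It filters the \emph{full} unital complex by $N+r$; there the Koszul-type cancellation between markers ($r$) and isolated vertices ($s$) is available (this is where the arity-zero hypothesis enters, guaranteeing that $d'$ is a nonzero multiple of the insertion of $\bbo$), and it shows that the homology of the associated graded consists only of the \emph{normalized} terms, $r=s=0$ with all external vertices of positive valence. The associated graded of the non-unital complex, for the arity filtration, has homology the full unnormalized $E^1$ page. The induced map is then the inclusion of the normalized subcomplex into the unnormalized one --- injective but not surjective --- and one concludes because this inclusion is a quasi-isomorphism for the $E^1$ differential (the standard normalized-versus-unnormalized chains argument; here the hypothesis on the image of the product generator is used), hence induces an isomorphism on $E^2$, after which completeness and boundedness of the filtrations give the result. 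If you want to salvage your short-exact-sequence formulation, you must prove $H(K)=0$ by this same comparison (i.e.\ a posteriori), not by a contraction internal to $K$.
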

\begin{proof}
We will consider a descending complete filtration on 
\[
\Def(\hoe_n^\bbo\to \Graphs_{n+k}^\bbo)
\cong
%\prod_{\substack{ N, r \geq 0 \\ (N,r)\neq (1,0) }} 
\prod_{\substack{ N, r \geq 0 \\ N+r\geq 2 }} 
\left( \e_n\{-n\}(N+r)\otimes \K[1]^{\otimes r}  \otimes \Graphs_{n+k}^\bbo(N) \right)^{\bbS_N\times \bbS_r}
\]
by the quantity $N+r$. The differential on the associated graded consists of two pieces
\[
d_0 = \delta + d'
\]
where $\delta$ is the differential on $\Graphs_{n+k}^\bbo$ while 
\[
d' : \left( \e_n\{-n\}(N+r)\otimes \K[1]^{\otimes r}  \otimes \Graphs_{n+k}^\bbo(N) \right)^{\bbS_N\times \bbS_r}
\to 
\left( \e_n\{-n\}(N+r)\otimes \K[1]^{\otimes (r+1)}  \otimes \Graphs_{n+k}^\bbo(N-1) \right)^{\bbS_{N-1}\times \bbS_{r+1}}
\]
acts by inserting the zero-ary operation in one vertex of the second factor.
Note that this insertion is zero unless the vertex has valence 0.
We may split any graph in $\Graphs_{n+k}^\bbo(N)$ into a piece all of whose external vertices have positive valence, and possibly several vertices with valence zero.
Let us call the subspace of graphs all of whose external vertices have positive valence $\Graphs_{n+k, norm}^\bbo(N)$. Then
\[
\left( \e_n\{-n\}(N+r)\otimes \K[1]^{\otimes r}  \otimes \Graphs_{n+k}^\bbo(N) \right)^{\bbS_N\times \bbS_r}
= \bigoplus_{s=0}^N
\left( \e_n\{-n\}(N+r)\otimes \K[1]^{\otimes r}  \otimes \Graphs_{n+k, norm}^\bbo(N-s) \right)^{ \bbS_{N-s}\times \bbS_r\times \bbS_s}
\]
Note that the differential $d'$ preserves the quantity $r+s$. Furthermore, it is easy to see that the subcomplex of fixed $r+s$ is acyclic if $r+s>0$.
It is easy to see from these considerations that the homology of the associated graded in our spectral sequence is 
\[
E^1 = \prod_{N\geq 2} \left( \e_n\{-n\}(N)\otimes \K[1]^{\otimes r}  \otimes \e_{n+k, norm}^\bbo(N) \right)^{\bbS_r\times \bbS_N}.
\]
The next differential increases the quantity $N$ by 1. 

Note that the resulting complex is quasi-isomorphic to the normalized subcomplex of the $E^1$ page of the spectral sequence associated to the filtration by arity ($N$) of the non-unital deformation complex $\Def(\hoe_n\to \Graphs_{n+k})$.
But it is well-known that the normalized complex is quasi-isomorphic to the full complex, and hence the Proposition follows by standard spectral sequence arguments.
\end{proof}

%\begin{cor}[Unital version of Theorems \ref{thm:schoenflies} and \ref{thm:PhiDef}]
%\[
% H(\Def'(\hoe_{n-1}^\bbo\to \Graphs_{n})) 
% \cong\K R \oplus  S^+(H(\GC_n^2) \oplus \K T)
% \cong S(H(\GC_n) \oplus \prod_{1\leq r\equiv 2n+1 \text{ mod $4$}}\K H_r \oplus \K T)
%\]
%where the $H_r$ denote the hedgehog classes and $T$ the tripod class.
%The class $R$ corresponding to scaling by $(\text{arity}-1)$. 
%\end{cor}

As a consequence of Lemma~\ref{lem:Phi_m2} and Proposition~\ref{pr:unital} we get:

\begin{cor}[Unital version of Theorems \ref{thm:schoenflies} and \ref{thm:PhiDef}]
\begin{multline*}
 H(\Def(\hoe_{n-1}^\bbo\to \Graphs_{n}^\bbo)) 
 \cong S^+\left(\Bigl(H(\GC_n^2) \oplus \K T\Bigr)[n]\right)[1-n]
 \cong \\
 S^+\left(\Bigl(H(\GC_n) \oplus \prod_{1\leq r\equiv 2n+1 \text{ mod $4$}}\K H_r \oplus \K T\Bigr)[n]\right)[1-n]
\end{multline*}
where the $H_r$ denote the hedgehog classes and $T$ the tripod class.
\end{cor}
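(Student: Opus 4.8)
The plan is to deduce the corollary by transporting the non-unital computation of Theorem~\ref{thm:PhiDef} through Proposition~\ref{pr:unital}. First I would fix the unital map under consideration: let $\Phi^\bbo\colon \hoe_{n-1}^\bbo\to \Graphs_n^\bbo$ be the unital extension of the map $\Phi$ from Subsection~``The map $\Phi$'', obtained from the unital enhancement of Kontsevich's zig-zag $C(\FM_n^\bbo)\to \Graphs_n^\bbo\leftarrow \e_n^\bbo$ recalled above. By construction the restriction of $\Phi^\bbo$ to the non-unital suboperad $\hoe_{n-1}\subset \hoe_{n-1}^\bbo$ is precisely $\Phi$ (followed by the inclusion $\Graphs_n\hookrightarrow\Graphs_n^2$, through which $\Phi$ factors).

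Next I would verify the two hypotheses of Proposition~\ref{pr:unital}, applied with the proposition's ``$n$'' taken to be $n-1$ and with $k=1$. The condition on the product generator is exactly the content of Lemma~\ref{lem:Phi_m2}: since $\Phi^\bbo$ agrees with $\Phi$ in arity two, the product generator is sent to the graph with two vertices and no edge, in particular to a non-zero multiple thereof. For the arity-zero condition one uses that $\e_{n-1}^\bbo(0)\cong \K\,\bbo$ and $\Graphs_n^\bbo(0)\cong \K\,\bbo$ are one-dimensional and that $\Phi^\bbo$ sends $\bbo$ to $\bbo$, which is a quasi-isomorphism in arity zero because the unital Kontsevich construction is a zig-zag of quasi-isomorphisms. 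Proposition~\ref{pr:unital} then yields a quasi-isomorphism
\[
 \Def(\hoe_{n-1}^\bbo\to \Graphs_n^\bbo)\;\xrightarrow{\ \simeq\ }\;\Def(\hoe_{n-1}\to \Graphs_n).
\]

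It remains to pass from the trivalent target $\Graphs_n$ to $\Graphs_n^2$ in order to invoke Theorem~\ref{thm:PhiDef}. The inclusion $\Graphs_n\hookrightarrow \Graphs_n^2$ is a quasi-isomorphism of operads by Proposition~\ref{p:quasi_iso}. Since the deformation complex is computed arity-wise as $\prod_N \Hom_{\bbS_N}(\e_{n-1}^\vee(N),-)$ and each $\e_{n-1}^\vee(N)$ is a finite-dimensional projective $\bbS_N$-module, the functor $\Hom_\bbS(\e_{n-1}^\vee,-)$ is exact and preserves quasi-isomorphisms; moreover the Maurer--Cartan element on the source corresponds to that on the target because $\Phi$ factors through $\Graphs_n$. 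Hence $\Def(\hoe_{n-1}\to \Graphs_n)\simeq \Def(\hoe_{n-1}\to \Graphs_n^2)=\Def(\Phi)$, and composing all the quasi-isomorphisms with the identification of Theorem~\ref{thm:PhiDef} gives the asserted formula for $H(\Def(\hoe_{n-1}^\bbo\to\Graphs_n^\bbo))$.

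The genuine content is already packaged in Proposition~\ref{pr:unital} and Theorem~\ref{thm:PhiDef}, so the argument is essentially an assembly; the step requiring the most care is the verification that the unital lift $\Phi^\bbo$ exists and is a quasi-isomorphism in arity zero, and—more subtly—that the Maurer--Cartan elements match compatibly along the whole chain of comparisons, so that the spectral-sequence/filtration argument internal to Proposition~\ref{pr:unital} genuinely applies to the map $\Phi^\bbo$ under consideration rather than to some abstract map with the same formal properties.
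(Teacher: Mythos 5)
Your proposal is correct and follows essentially the same route as the paper, which derives this corollary precisely as a consequence of Lemma~\ref{lem:Phi_m2} and Proposition~\ref{pr:unital} applied to the unital lift of $\Phi$, combined with the non-unital computation of Theorem~\ref{thm:PhiDef}. The only step you spell out that the paper leaves implicit is the standard identification $\Def(\hoe_{n-1}\to\Graphs_n)\simeq\Def(\hoe_{n-1}\to\Graphs_n^2)$ via the quasi-isomorphism $\Graphs_n\hookrightarrow\Graphs_n^2$ and an arity-filtration argument, which is a correct and welcome clarification.
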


\section{Connection to embedding calculus}\label{s:emb_calcl}
The manifold calculus developed by Goodwillie and Weiss~\cite{GoodWeiss,WeissEmb} has been shown to be deeply connected to the theory of operads~\cite{ALV,Turchin2,DeBrito-Weiss,Sinha,Tur_FM}. One of the main applications of this calculus is the study of embedding spaces. Let $Emb_\partial(D^m,D^n)$ denote the space of smooth embeddings $D^m\hookrightarrow D^n$ pointwise fixed at the boundary as some equatorial inclusion. In particular for $m=n$ we get the space $\Diff_\partial(D^n)$ of diffeomorphisms of a disc preserving the boundary pointwise. By taking derivative at every point we get an obvious map
$$
Emb_\partial(D^m,D^n)\to \Omega^mV_{m,n},
$$
where $V_{m,n}$  is the space of linear injective maps $\R^m\hookrightarrow \R^n$. The homotopy fiber $\overline{Emb}_\partial(D^m,D^n)$ of this map (over the base point) is usually called {\it space of embeddings modulo immersions}. We denote this space  by $\overline{\Diff}_\partial(D^n)$ in case $m=n$. In order to study the homology of $\overline{Emb}_\partial(D^m,D^n)$ from the point of view of the calculus, one defines a cofunctor 
$$
C_*\overline{Emb}_\partial(-,D^n)\colon \widetilde{\op O}(D^m)\to Ch
$$
from the category of open subsets of $D^m$ containing $\partial D^m$ to the category of chain complexes. This cofunctor assigns to any open set $U\subset D^m$ the chain complex $C_*\overline{Emb}_\partial(U,D^n)$, where $\overline{Emb}_\partial(U,D^n)$ is a similar space of embeddings modulo immersions. The general machinery of the calculus provides us with a map
$$
C_*\overline{Emb}_\partial(D^m,D^n)\to T_\infty C_*\overline{Emb}_\partial(D^m,D^n),
$$
where the right-hand side is the limit of the Goodwillie-Weiss tower. This map is a quasi-isomorphism only for $n\geq 2m+2$, i.~e. when the space $\overline{Emb}_\partial(D^m,D^n)$ is connected. Even when the convergence does not hold, one can still study the limit, that might produce interesting cohomology classes and invariants of spaces of knots. In particular, one can also apply this construction to the case $m=n$, or in other words to the study of the space of diffeomorphisms of $D^n$ fixing boundary pointwise. The following result describes this limit from the point of view of the theory of operads.

\begin{thm*}[\cite{Turchin2}]
One has a natural equivalence of complexes
$$
T_\infty C_*\overline{Emb}_\partial(D^m,D^n)\simeq 
\hIBim_{E_m^\bbo}(E_m^\bbo,E_n^\bbo).
$$
\end{thm*}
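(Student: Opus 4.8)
The plan is to realize the Goodwillie--Weiss limit as a homotopy limit over a small category of configurations of little disks, and then to recognize that homotopy limit as a derived space of infinitesimal bimodule maps. First I would use the standard homotopy-limit description of the Taylor tower,
\[
T_\infty C_*\overline{Emb}_\partial(D^m,D^n) \simeq \operatorname*{holim}_{U\in \widetilde{\op O}(D^m)} C_*\overline{Emb}_\partial(U,D^n),
\]
together with the Weiss good-cover machinery, which reduces the indexing category from all of $\widetilde{\op O}(D^m)$ to the subcategory $\op D_m\subset \widetilde{\op O}(D^m)$ whose objects are the open sets isotopic to a disjoint union of finitely many standard $m$-disks (with the boundary collar). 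The point of this reduction is that the cofunctor is polynomial in the limit, so its homotopy limit is determined by its restriction to disjoint disks.

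Next I would carry out the operadic identifications on $\op D_m$. On the value side, a scanning/retraction argument gives
\[
\overline{Emb}_\partial(\textstyle\bigsqcup_k D^m, D^n) \simeq \calC_n(k) \simeq \FM_n(k),
\]
an embedding modulo immersions of $k$ disjoint disks retracting onto the configuration of its centres equipped with the first-order frame data at each centre, which is exactly a point of the $k$-th little $n$-disks space; the boundary collar supplies the distinguished arity-zero operation, so these values assemble into the unital operad $E_n^\bbo$. The restriction maps of the cofunctor along embeddings of $m$-disks into $m$-disks endow this symmetric sequence with a right infinitesimal action of $E_m^\bbo$, while the little $n$-disks composition, pulled back along the inclusion-induced operad map $E_m^\bbo\to E_n^\bbo$, supplies the compatible left infinitesimal action. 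Together these exhibit $E_n^\bbo$ as an infinitesimal bimodule over $E_m^\bbo$.

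Assembling these identifications, the indexing category $\op D_m$ is precisely the one computing maps out of the tautological infinitesimal $E_m^\bbo$-bimodule $E_m^\bbo$ --- realized here as the symmetric sequence $\overline{Emb}_\partial(-,D^m)$ of disks inside $D^m$ --- so the homotopy limit over $\op D_m$ becomes by definition the derived mapping space
\[
\hIBim_{E_m^\bbo}(E_m^\bbo,E_n^\bbo).
\]
Naturality in $(m,n)$ is automatic, since every construction is functorial in the disks and in the ambient $D^n$. The hardest part will be this last identification: one must match the cosimplicial (cobar) complex computing the homotopy limit over $\op D_m$ with an explicit cofibrant resolution of $E_m^\bbo$ as an infinitesimal bimodule over itself, and one must do so while keeping careful track of the arity-zero (unital) structure throughout, so that both sides genuinely involve the pointed operads $E_m^\bbo$, $E_n^\bbo$ rather than their non-unital truncations.
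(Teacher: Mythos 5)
First, a point of order: the paper you are working from does not prove this theorem at all --- it is quoted, with attribution, from \cite{Turchin2}, and the supporting machinery lives in \cite{Tur_FM} and \cite{DeBrito-Weiss} --- so the comparison below is against the cited proof rather than against anything internal to this paper. At the level of outline your strategy does coincide with that proof: express $T_\infty$ as a homotopy limit over the disk-like open sets, compute the value of $C_*\overline{Emb}_\partial(-,D^n)$ on a collar plus $k$ disks, and reinterpret the resulting diagram operadically.

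The genuine gap is the step you phrase as ``the homotopy limit over $\op D_m$ becomes by definition the derived mapping space.'' It does not, and this is not a bookkeeping issue: $\op D_m$ is a poset (objects are open subsets, morphisms are inclusions), whereas $\hIBim_{E_m^\bbo}(E_m^\bbo,E_n^\bbo)$ is a derived mapping object in a homotopy category of infinitesimal bimodules; the two indexing structures are not the same ``by definition.'' To connect them one must (i) show that the simplicial localization of this poset at isotopy equivalences is equivalent to the topologically enriched category of finite disjoint unions of $m$-disks and boundary-respecting embeddings --- this is the hard ``homotopy sheaf''/context-free input of \cite{DeBrito-Weiss} and \cite{Tur_FM}; (ii) identify contravariant functors out of that enriched category with infinitesimal bimodules over $E_m^\bbo$, where it is the collar that produces the infinitesimal left action, not merely ``the distinguished arity-zero operation'' as you say; and (iii) verify that the homotopy limit computes the derived $\hIBim$-mapping object, i.e.\ exhibit the relevant bar-type resolution of $E_m^\bbo$ as a cofibrant replacement over itself. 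Your sketch asserts the conclusion of (i)--(iii) while only flagging (iii) as ``the hardest part''; items (i) and (ii) are precisely the content of the cited theorem and are absent from your argument. Separately, a smaller error: an embedding of $k$ disks modulo immersions does \emph{not} retain ``first-order frame data at each centre'' --- passing to the homotopy fiber over the immersion space kills exactly the $V_{m,n}$-valued derivative data, and that is why the value is the unframed configuration space $\mathrm{Conf}_k(\mathring D^n)\simeq\calC_n(k)$. Had your framed description been correct, the right-hand side of the theorem would have to be built from the framed little disks operad rather than from $E_n^\bbo$, and the statement would be false as written.
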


This result holds for any ring of coeffcients. The right-hand side is the space of derived morphisms $E_m^\bbo\to E_n^\bbo$ in the category of infinitesimal bimodules over $E_m^\bbo$.  To make things precise  we will be interested in the limit $T_\infty C_*\overline{Emb}_\partial(D^m,D^n)$ in the model category of unbounded chain complexes.\footnote{One can also consider such limit in the category of   non-negatively graded  complexes. The latter one is obtained from the former one by a simple truncation preserving non-negatively graded homology.}  We conjecture that for any coefficients,
\begin{equation}
\label{equ:conject_desusp}
\hIBim_{E_m^\bbo}(E_m^\bbo,E_n^\bbo)\simeq \hDer_*(E_m^\bbo\to E_n^\bbo)[m+1]
\simeq \hDer_*(E_m\to E_n)[m+1].
\end{equation}
(To be precise the left-hand side must be quotiented by a one-dimensional vector space in degree zero, or in other words one needs to take the limit of the {\it reduced} singular chains $T_\infty \tilde C_*\overline{Emb}_\partial(D^m,D^n)$.) Notice also that in case of characteristic zero the second equivalence is proved by Proposition~\ref{pr:unital}.) More generally we have the following 

\begin{con*}%\label{con1}
For any commutative ring of coefficients $\mathbb K$, given a morphism of differential graded  operads  $E_m^\bbo\to {\op O}$ with $\op O$ augmented over $\Com^\bbo$ and doubly reduced: ${\op O}(0)={\op O}(1)={\mathbb K}$,  one has
\begin{equation}
\label{equ:conject_desusp_general}
\hIBim_{E_m^\bbo}(E_m^\bbo,{\op O})\simeq \hDer_*(E_m^\bbo\to {\op O})[m+1]\simeq \hDer_*(E_m\to {\op O})[m+1].
\end{equation}
\end{con*}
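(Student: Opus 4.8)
The plan is to split \eqref{equ:conject_desusp_general} into its two constituent equivalences and treat them by different methods, reducing the conjecture to explicit comparisons of chain complexes. The rightmost equivalence, $\hDer_*(E_m^\bbo\to\op O)[m+1]\simeq\hDer_*(E_m\to\op O)[m+1]$, is the integral analogue of Proposition~\ref{pr:unital}: over a field of characteristic zero and for $\op O$ one of the graph operads $\Graphs$ it is exactly that proposition, so the only new content is to make the argument valid over an arbitrary commutative ring $\K$ and for a general doubly reduced $\op O$. The leftmost equivalence, $\hIBim_{E_m^\bbo}(E_m^\bbo,\op O)\simeq\hDer_*(E_m^\bbo\to\op O)[m+1]$, is the structurally new one and is where I would concentrate the effort; the hypotheses that $\op O$ be augmented over $\Com^\bbo$ and doubly reduced are used throughout to guarantee that the completed products defining all three complexes converge and that the three reductions match.

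For the rightmost equivalence I would rerun the spectral sequence of the proof of Proposition~\ref{pr:unital}, checking that every step survives integrally. The filtration by $N+r$ and the auxiliary differential $d'$ inserting the nullary operation $\bbo$ into a valence-zero vertex are defined over $\Z$, and the acyclicity of the subcomplexes of fixed $r+s>0$ is witnessed by an explicit contracting homotopy (deleting one freshly created $\bbo$-vertex) that divides by no integer. The only delicate point is the passage to $\bbS$-(co)invariants, which in the characteristic-zero proof silently uses averaging; over a general ring I would avoid strict invariants and instead run the whole argument on the chain-level Hirsh--Mill\`es resolution $\hoe_m^\bbo=(\Omega((\e_m^\bbo)^\vee),d_\Omega+d_\bbo)$ of \cite{HirshMilles}, so that all symmetric-group actions are on free (hence flat) modules.

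For the leftmost equivalence I would exhibit an explicit cofibrant resolution of $E_m^\bbo$ as an infinitesimal bimodule over itself and identify the resulting mapping complex with the reduced derivation complex. Using the quasi-free operadic model $\hoe_m^\bbo=\Omega((\e_m^\bbo)^\vee)$, one checks that a quasi-free operad is cofibrant as an infinitesimal bimodule over itself, the infinitesimal-bimodule generators being its cogenerators with a single marked input. Applying $\Hom_{E_m^\bbo\text{-IBimod}}(-,\op O)$ to the free infinitesimal bimodule on these generators collapses, by freeness, to $\prod_N\Hom_{\bbS_N}(\overline{(\e_m^\bbo)^\vee}(N),\op O(N))$ equipped with precisely the derivation differential, i.e. to $\hDer_*(E_m^\bbo\to\op O)$ up to a global shift. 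The shift is $[m+1]$: the factor $m$ is the operadic suspension carried by $\e_m^\vee=\e_m^*\{m\}$ (equivalently the number of deloopings relating infinitesimal-bimodule and operad mapping spaces, cf. \cite{DeBrito-Weiss}), and the extra $1$ is the internal degree of the one-marked-input bar resolution; I would pin it down by comparing the lowest-arity generators. By construction this identification is functorial in $\op O$, hence compatible with the equivalence $\hIBim_{E_m^\bbo}(E_m^\bbo,E_n^\bbo)\simeq T_\infty C_*\overline{Emb}_\partial(D^m,D^n)$ of \cite{Turchin2} that motivates the statement.

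The main obstacle is, in both steps, the passage to arbitrary coefficients. Over $\Q$ the Hirsh--Mill\`es resolution and the cofibrancy of a quasi-free operad as an infinitesimal bimodule over itself are routine, but over $\Z$ or in positive characteristic they rest on Koszulness and $\bbS$-freeness statements that are genuinely delicate: one must verify that $\e_m^\bbo$ (equivalently $\Poiss_m^\bbo$) is Koszul over $\Z$ and that the generators assembled above form a $\bbS$-free, hence flat, resolution, so that the collapse of the $\Hom$-complex is a quasi-isomorphism on the nose and not merely after $\otimes\,\Q$. I expect this integral cofibrancy and flatness bookkeeping, rather than the combinatorics of the comparison maps, to be the decisive difficulty, and it is presumably why the statement is posed as a conjecture.
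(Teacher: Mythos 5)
This statement is posed in the paper as a conjecture: the paper gives no proof, only the remarks that it is known for $\K=\R$ when the map factors through $\Com^\bbo$ \cite{Turchin3}, for $m=1$ \cite{DwHe1,Tur_deloop}, that its topological analogue is the Dwyer--Hess theorem \cite{DwHe2}, and that the second equivalence in characteristic zero is Proposition~\ref{pr:unital}. So your attempt must stand on its own, and it has two genuine gaps. The first affects both halves of your plan: everything you do is built on the Hirsh--Mill\`es resolution $\hoe_m^\bbo=\Omega((\e_m^\bbo)^\vee)$, which is a resolution of the \emph{homology} operad $\e_m^\bbo$, not of the \emph{chains} operad $E_m^\bbo$. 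Over $\R$ the two may be conflated by formality, but over an arbitrary commutative ring $\K$ --- the whole point of the conjecture --- $E_m$ is not formal (e.g.\ $E_2$ over $\mathbb{F}_2$), and $H_*(E_m;\K)$ is not even the Poisson operad in positive characteristic (it carries Dyer--Lashof-type operations). Hence $\hDer_*(E_m^\bbo\to\op O)$ must be computed from a genuine cofibrant replacement of the chains operad (say bar--cobar $\Omega B E_m^\bbo$), and your proposed bookkeeping --- proving Koszulness of $\Poiss_m^\bbo$ over $\Z$ and $\bbS$-flatness of the Hirsh--Mill\`es generators --- would, even if carried out, compute derivations of the wrong operad. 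Your closing paragraph identifies ``integral Koszulness'' as the decisive difficulty, but no amount of Koszul theory for $\e_m^\bbo$ substitutes for the (false) integral formality of $E_m^\bbo$.

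The second gap is in the left equivalence and is the decisive one. You assert that a quasi-free operad is cofibrant as an infinitesimal bimodule over itself, with generators ``its cogenerators with a single marked input,'' so that the mapping complex collapses by freeness to $\prod_N\Hom_{\bbS_N}(\overline{(\e_m^\bbo)^\vee}(N),\op O(N))$ carrying \emph{precisely} the derivation differential, up to a shift. This claim is unsubstantiated, and it is in fact the entire content of the conjecture. An operad is not free as an infinitesimal bimodule over itself, and resolving it does not formally reproduce the operadic derivation complex: already for $m=1$, $E_1^\bbo$ as an infinitesimal bimodule over itself is resolved by a bar-type resolution whose mapping complex into $\op O$ is a Hochschild-type complex, and its identification with $\hDer_*(E_1\to\op O)[2]$ is exactly the delooping theorem of \cite{DwHe1,Tur_deloop}, proved via an explicit cellular cofibrant replacement of $\calC_1^\bbo$ --- not by a collapse-of-a-free-resolution argument. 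The shift $[m+1]$ that you propose to ``pin down by comparing lowest-arity generators'' is the fingerprint of an $(m+1)$-fold delooping (compare the $\Omega^{m+1}$ in the Dwyer--Hess theorem \cite{DwHe2}); if it dropped out of a formal identification of generators, that theorem and its dg analogue would be trivialities, which they are not. What is actually missing --- and what the paper's discussion implicitly identifies as the open problem --- is an explicit cofibrant replacement of $E_m^\bbo$ as an infinitesimal bimodule over itself for $m\geq 2$, valid over arbitrary $\K$, together with a nontrivial comparison of the resulting mapping complex with the operadic derivation complex.
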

% 
%   In fact we conjecture that~\eqref{equ:conject_desusp} is true for any doubly reduced operad $\op O$ endowed with a map $E_m^\bbo\to {\op O}$:
%\begin{equation}
%\label{equ:conject_desusp_general}
%\hIBim_{E_m^\bbo}(E_m^\bbo,{\op O})\simeq \hDer_*(E_m^\bbo\to {\op O})[m+1]\simeq \hDer_*(E_m\to {\op O})[m+1].
%\end{equation}
The equivalence~\eqref{equ:conject_desusp_general} has been shown for $\mathbb K=\R$ in the case when the map $E_m^\bbo\to {\op O}$ factors through $\Com^\bbo$, see~\cite{Turchin3}, and also in the case $m=1$ (see the comment right after the theorem below). The topological version of this conjecture  has been proved by Dwyer and Hess:

\begin{thm*}[\cite{DwHe2}]
Given a morphism of topological operads $\calC_m^\bbo\stackrel{f}{\to}{\op O}$ with $\op O$ being doubly reduced: ${\op O}(0)\simeq {\op O}(1)\simeq *$, one has a weak equivalence of spaces
$$
\hIBim_{\calC_m^\bbo}(\calC_m^\bbo,{\op O})\simeq \Omega^{m+1}\hOperad(\calC_m^\bbo, {\op O}),
$$
where the space $\hOperad(\calC_m^\bbo, {\op O})$ is the space of derived maps of operads $\calC_m^\bbo\to {\op O}$ based at~$f$.
\end{thm*}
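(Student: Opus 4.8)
The plan is to obtain the asserted weak equivalence as a composite of two deloopings, passing through the intermediate category of bimodules over $\calC_m^\bbo$. Write $\mathsf{Op}$ for topological operads, and $\mathsf{Bimod}$, $\mathsf{IBimod}$ for bimodules and infinitesimal bimodules over $\calC_m^\bbo$, each equipped with the projective model structure in which weak equivalences and fibrations are detected arity-wise. There are forgetful functors $\mathsf{Op}_{/\calC_m^\bbo}\to \mathsf{Bimod}\to \mathsf{IBimod}$ sending an operad under $\calC_m^\bbo$ to its underlying bimodule, and a bimodule to its underlying infinitesimal bimodule. First I would replace $\calC_m^\bbo$ by a cofibrant resolution in each of these three categories; the Boardman--Vogt $W$-construction is convenient, since it presents $\hOperad(\calC_m^\bbo,\op O)$, $\operatorname{hBimod}_{\calC_m^\bbo}(\calC_m^\bbo,\op O)$ and $\hIBim_{\calC_m^\bbo}(\calC_m^\bbo,\op O)$ simultaneously as totalizations of explicit cosimplicial (resp.\ multicosimplicial) spaces assembled from the spaces $\op O(r)$.

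The first delooping is the identification $\operatorname{hBimod}_{\calC_m^\bbo}(\calC_m^\bbo,\op O)\simeq \Omega\,\hOperad(\calC_m^\bbo,\op O)$, which is insensitive to $m$. Here one uses that an operad is a monoid for the composition product, so that $\hOperad$ is a derived mapping space of monoids while $\operatorname{hBimod}$ is one of modules; comparing the two via a bar/classifying-space construction, the auxiliary space measuring the difference becomes grouplike and contractible precisely because $\op O$ is doubly reduced (${\op O}(0)\simeq{\op O}(1)\simeq *$), so that forgetting from monoid maps to module maps costs exactly one loop.

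The second and harder delooping is $\hIBim_{\calC_m^\bbo}(\calC_m^\bbo,\op O)\simeq \Omega^m\,\operatorname{hBimod}_{\calC_m^\bbo}(\calC_m^\bbo,\op O)$, and this is where the dimension $m$ genuinely enters. An infinitesimal bimodule retains only the one-at-a-time insertion operations, whereas a full bimodule records the entire $\calC_m^\bbo$-action; reconstructing the latter from the former amounts to integrating over the $m$ independent coordinate directions of the little $m$-cubes. I would make this precise either via Dunn additivity $\calC_m\simeq \calC_1^{\otimes m}$, delooping one $E_1$-direction at a time, or, in the configuration-category language of Boavida de Brito--Weiss, by exhibiting both sides as derived spaces of sections over configurations in $\R^m$ and reading off the $m$-fold delooping directly from the $\R^m$-structure. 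Composing the two steps produces $\Omega^{1+m}\hOperad(\calC_m^\bbo,\op O)$, as claimed.

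The main obstacle is controlling the homotopy fibres in this second step: one must show that the successive fibres of the tower comparing bimodules and infinitesimal bimodules are honest connected loop spaces, rather than deloopings spoiled by a connectivity defect. This is exactly where double reducedness is indispensable, since the vanishing of $\op O$ in arities $0$ and $1$ supplies the connectivity estimates needed to identify each stage of the tower with a genuine loop functor and to guarantee convergence of the totalizations. Once these connectivity inputs are secured the two deloopings are formal, and their composite yields the stated weak equivalence.
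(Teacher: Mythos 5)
First, a point of comparison: the paper does not prove this statement at all. It is quoted as an external result of Dwyer--Hess (\cite{DwHe2}, ``paper to appear''), with the remark that the case $m=1$ is proved in \cite{DwHe1,Tur_deloop}. So your proposal can only be measured against the proofs that exist in the literature. Your two-step decomposition --- $\hIBim_{\calC_m^\bbo}(\calC_m^\bbo,\op O)\simeq\Omega^m\operatorname{hBimod}_{\calC_m^\bbo}(\calC_m^\bbo,\op O)$ followed by $\operatorname{hBimod}_{\calC_m^\bbo}(\calC_m^\bbo,\op O)\simeq\Omega\,\hOperad(\calC_m^\bbo,\op O)$ --- is indeed the skeleton of those proofs: Turchin's delooping for $m=1$ (cited in the paper) and the later general arguments both pass through the intermediate category of bimodules, with one loop coming from bimodules vs.\ operads and $m$ loops from infinitesimal bimodules vs.\ bimodules.

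However, what you have written is a plan, not a proof: the two delooping equivalences \emph{are} the theorem, and both are asserted via heuristics that do not survive scrutiny. (i) For the one-fold delooping, the claim that ``the auxiliary space measuring the difference becomes grouplike and contractible because $\op O$ is doubly reduced'' is exactly the statement to be proven, and it is not formal: the composition product is not symmetric monoidal and does not preserve colimits in the right-hand variable, so the standard bar/classifying-space comparison of monoid maps versus module maps does not apply off the shelf --- this is precisely why the general-$m$ statement resisted proof for years. (ii) For the $m$-fold delooping, iterating Dunn additivity ``one $E_1$-direction at a time'' does not work naively: an infinitesimal bimodule over $\calC_m^\bbo$ is not an $m$-fold iterated (infinitesimal) bimodule over $\calC_1^\bbo$, the forgetful functor from bimodules to infinitesimal bimodules is not compatible with such a factorization, and no known proof proceeds this way; the actual arguments require genuinely different input (explicit Fulton--MacPherson-type cofibrant models, or the configuration-category technology you mention only as an alternative). (iii) You also gloss over the arity-zero issue: because $\calC_m^\bbo$ has a $0$-ary operation, the projective model structure plus the Boardman--Vogt $W$-construction does not directly produce the cofibrant replacement you need; one must work in the reduced/unitary ($\Lambda$-operad) framework, and this is a real technical constraint, not bookkeeping. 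In short, your final sentence --- that once connectivity is secured ``the two deloopings are formal'' --- is the gap: they are not formal, and securing them is the entire content of the Dwyer--Hess theorem.
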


In case $m=1$ this theorem was proved in~\cite{DwHe1,Tur_deloop}. The proof in~\cite{Tur_deloop} uses an explicit cellular cofibrant replacement of $\calC_1^\bbo$ and thus can be adjusted to the differential graded context.

Given $m_1<m_2\leq n$, one gets natural maps
\begin{equation}
\label{equ:hDer_map}
\hDer_*(E_{m_2}\to E_n)\to \hDer_*(E_{m_1}\to E_n).
\end{equation}
For example~\eqref{equ:HDefmap} is a particular case of such map. We believe that~\eqref{equ:hDer_map} models the map
$$
T_\infty \tilde C_*\Sigma^{m_2-m_1}\overline{Emb}_\partial(D^{m_2},D^n)\to
T_\infty \tilde C_*\overline{Emb}_\partial(D^{m_1},D^n)
$$
obtained from the natural scanning map $\overline{Emb}_\partial(D^{m_2},D^n)\to\Omega^{m_2-m_1}
\overline{Emb}_\partial(D^{m_1},D^n)$ applying adjunction between loops and suspensions and then $T_\infty \tilde C_*$.

Recall that the Cerf Lemma \cite[Appendix, Section~5, Proposition~5]{Cerf2}, \cite[Proposition~5.3]{Budney} says that the natural scanning map~\eqref{eq:cerf} is a weak equivalence. Since the Stiefel manifold  $V_{n-1,n}$ is just the group $\mathit{SO}(n)$, one gets that the modulo immersions scanning map 
\begin{equation}
\overline{\Diff}_\partial(D^{n+1})\to \Omega \overline{Emb}_\partial(D^n,D^{n+1})
\label{eq:cerf_bar}
\end{equation}
is also an equivalence.
 Modulo the Conjecture above, our Algebraic Cerf Lemma (Theorem~\ref{thm:PhiDef}) implies  that the real homology of $T_\infty C_* \overline{Emb}_\partial(D^n,D^{n+1})$ and of $T_\infty C_* \overline{\Diff}_\partial(D^{n+1})$ are  completed symmetric algebras whose spaces of generators  up to a shift in degree by one are the same. (Note however that the generators could be of both positive and negative degrees.) Recall also that $\pi_0 Emb_\partial(D^n,D^{n+1})$ is known to be a torsion group for $n\neq 3,\, 4$~\cite[Section~5]{Budney}, and therefore is rationally trivial.  This is actually a consequence of the (generalized) Schoenflies theorem which has been proved for all dimensions except $n=3$ and which states that any smoothly embedded sphere $S^n$ in $S^{n+1}$ bounds a smooth disc on each side. As a consequence the natural map $\pi_0 \Diff_\partial (D^{n})\to\pi_0 Emb_\partial(D^n,D^{n+1})$ is surjective. In fact it follows from the Cerf pseudoisotopy theorem that this map is an isomorphism for $n\neq 3,\,4$ and the corresponding group is the group $\theta^{n+1}$ of exotic smooth structures on $S^{n+1}$~\cite[Section~5]{Budney}.

\bibliographystyle{plain}

\end{document}